\newtheoremstyle{indented}
  {10pt}% space before
  {10pt}% space after
  {\addtolength{\@totalleftmargin}{3em}
   \addtolength{\linewidth}{-6em}
   \parshape 1 3em \linewidth
	 \itshape}% body font
  {}% indent
  {\bfseries}% header font
  {.}% punctuation
  {.5em}% after theorem header
  {}% header specification (empty for default)
\numberwithin{equation}{section}
\newtheorem{theorem}{Theorem}[section]
\newtheorem{corollary}[theorem]{Corollary}
\newtheorem{proposition}[theorem]{Proposition}
\newtheorem{definition}[theorem]{Definition}
\newtheorem{lemma}[theorem]{Lemma}
\newtheorem{remark}[theorem]{Remark}
\theoremstyle{definition}
\theoremstyle{remark}
\begin{document}
\medskip
\thispagestyle{empty}

\newcommand{\C}{{\mathbb C}}
\newcommand{\bS}{{\mathbb S}}
\newcommand{\Q}{{\mathbb Q}}
\newcommand{\N}{{\mathbb N}}
\newcommand{\R}{{\mathbb R}}
\newcommand{\Z}{{\mathbb Z}}
\newcommand{\A}{{\mathbb A}}
\newcommand{\F}{{\mathbb F}}
\newcommand{\f}{{\mathfrak f}}
\newcommand{\g}{{\mathfrak g}}
\newcommand{\n}{{\mathfrak n}}
\newcommand{\m}{{\mathfrak m}}
\newcommand{\X}{{\mathfrak X}}
\renewcommand{\O}{{\mathcal O}}
\newcommand{\cP}{{\mathcal P}}
\newcommand{\U}{{\mathcal U}}
\newcommand{\p}{{\mathfrak p}}
\newcommand{\T}{{\bf T}}
\newcommand{\V}{{\mathcal V}}
\newcommand{\W}{{\mathcal W}}
\renewcommand{\P}{{\mathfrak P}}
\renewcommand{\epsilon}{{\varepsilon}}
\newcommand{\xx}{^{\times}}
\newcommand{\Gal}{{\mathrm{Gal}}}
\newcommand{\GL}{{\mathrm{GL}}}
\newcommand{\Ker}{{\mathrm{Ker}}}
\newcommand{\Sym}{\mathrm{Sym}}
\newcommand{\PGL}{{\mathrm{PGL}}}
\newcommand{\PSL}{{\mathrm{PSL}}}
\newcommand{\SL}{{\mathrm{SL}}}
\newcommand{\Frob}{{\mathrm{Frob}}}
\newcommand{\im}{{\mathrm{Im}}}
\newcommand{\Stab}{{\mathrm{Stab}}}
\newcommand{\spec}{\text{Spec}}
\newcommand{\topo}{\text{top}}
\newcommand{\lie}{\text{Lie}}
\newcommand{\Id}{\text{Id}}
\newcommand{\res}{\text{Res}}
\newcommand{\Hom}{\text{Hom}}
\newcommand{\End}{\text{End}}
\newcommand{\Ind}{\text{Ind}}
\newcommand{\pr}{\text{pr}}

\title{NON-EISENSTEIN COHOMOLOGY OF LOCALLY SYMMETRIC SPACES FOR $\GL_2$ OVER A CM FIELD}

\author{SHAYAN GHOLAMI\footnote{
\includegraphics[scale=0.03]{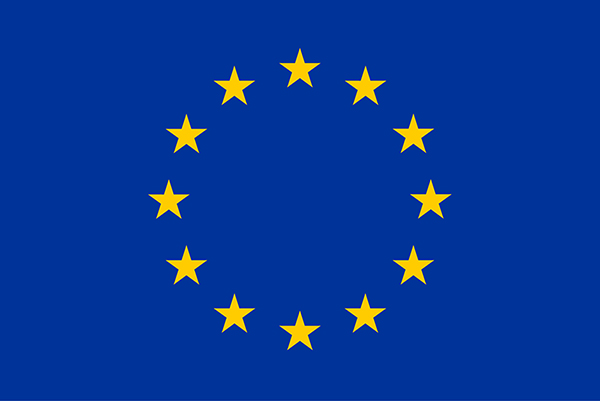} 
The author was supported by the European Union’s Horizon 2020 research and innovation program under the Marie
Sklodowska-Curie grant agreement No 754362}}
\date{}
\maketitle

\begin{abstract}
Let $F$ be a CM field, let $p$ be a prime number.
 The goal of this paper is to show, under mild conditions, that the modulo $p$ cohomology of the locally symmetric spaces $X$ for $\GL_2(F)$ with level prime to $p$
 is concentrated in degrees belonging to the Borel-Wallach range $[q_0,q_0+\ell_0]$ after localizing at a "strongly non-Eisenstein" maximal ideal of the Hecke algebra.
From this result, we deduce expected consequences on the structure of the first and last cohomology groups as modules over the Hecke algebra.
\end{abstract}
\tableofcontents
\section{Introduction}

 Let $G_{/\Q}$ be a connected reductive group and let $X=G_\infty/K_\infty$ be the symmetric space for $G_\infty=G(\R)$. 
For any neat compact open subgroup $K\subset G(\A_f)$, we can consider the locally symmetric space $X_K=G(\Q)\backslash( G(\A_f)/K\times X)$.
 
 Let us define the integers 
 $$l_0 := rk(G_\infty) - rk(K_\infty) \text{ and } q_0 := \frac{1}{2} (\dim_{\R} X  - l_0)$$
 
 A well-known conjecture (see for instance \cite[Section 9, Conjecture B (4) (a)]{CG18}) predicts that the cohomology of the locally symmetric space $X_K$ 
with any $\F_p$-local system vanishes after localizing at a non-Eisenstein maximal ideal of the Hecke algebra of $G$, 
outside of the range $[q_0,q_0+l_0]$. It was motivated by its known analogue for $\Q_p$-local systems (cf. \cite{BW} Theorem III.5.1).
 
 When $F$ is a CM field with degree $2d$ and $G = \res_{F/\Q} GL_2$, the dimension of $X$ is equal to $4d-1$, $q_0 = d$ and $l_0=2d-1$.

In this paper, we assume that the CM field $F$ is the compositum of a totally real field $F^+$ and an imaginary quadratic field $F_0$.
Let $p$ be a prime. The spherical Hecke algebra $\T=\T^S$ (defined in Section 2) is generated by $T_{v,1},T_{v,2}$ for all finite places $v\notin S$. 

Let $\O$ be the valuation ring of a $p$-adic field, let $\varpi$ be a uniformizing parameter in $\O$;
Given an algebraic representation $\V$ of $G$ over $\O$,
the Hecke algebra $\T$ acts on the Betti cohomology groups $H^i(X_K, \V)$, resp. $H^i(X_K, \V/\varpi^m \V)$.
Let $\m \subset \T$ be a maximal ideal in the support of $H^i (X_K,\V)$. 

By a theorem of Scholze \cite[Theorem I3]{Sch15}, there exists a sufficiently big finite extension $\kappa$ of $\F_p$,
and a continuous semisimple Galois representation
 
 $$ \bar{\rho}_\m : G_F \to \GL_2(\kappa)$$

which is characterized, up to conjugation, as follows:
for every $v \notin S$, $\bar{\rho}_\m$ is unramified at $v$ and the characteristic polynomial of $\bar{\rho}_\m(\Frob_v)$
is equal to $X^2 - T_{v,1}X + T_{v,2}N(v) (mod \,\m)$.

Let us say that $\overline{\rho}_\m$ has very large image if there exists a subfield $\kappa^\prime\subset \kappa$ such that
$\SL_2 (\kappa^\prime) \subset \im (\bar{\rho}_{\m}) \subset \kappa^{\times} 
\GL_2(\kappa^\prime)$.

 We write $\V=\V_\lambda$ for a representation of highest weight $\lambda=(m_\tau,n_\tau)_\tau$ for all embeddings $\tau$ of $F$.
Let $c$ be the complex conjugation. We assume that $\lambda=(m_\tau,n_\tau)_\tau$ is pure, that is:

$m_\tau+n_{\tau c}=n_\tau+m_{\tau c}$ for all $\tau$'s.

A cuspidal representation $\pi$ of $\GL_2 (\A_F)$ is called of cohomological weight $\lambda=(m_\tau,n_\tau)_\tau$ if it occurs in $H^i (X_K,\V_\lambda\otimes\C)$.
The purity weight $w=w(\lambda)$ of $\lambda$ is defined as $w=n_\tau+m_{\tau c}=n_\tau+m_{\tau c}$ for any $\tau$.

Our main result is

 \begin{theorem}
 Let $\pi$ be a cuspidal automorphic 
 representation of $\GL_2 (\A_F)$ with level $K$, cohomological weight $
 \lambda$ and purity weight $w$. Assume that $p$ splits in $F_0$, $K_p$ is hyperspecial and $\pi$ is unramified 
 outside of $S$. Let $\m \subset \T^S$ be the maximal 
 ideal of the Hecke algebra associated to $(\pi,p)$.

Assume that $w$ is even, $n_\tau \equiv n_{\tau c} \text{ (mod }2)$ for all $\tau \in \Hom(F,E)$, $p>2d^2(d+1)(-2w+4)$ and that $\overline{\rho}_\m$ has very large image.
 Then, for any $m\geq 1$, 
$$H^i (X_K, \V_\lambda/\varpi^m\V)_\m=0\quad\mbox{\rm for all}\,\,i \notin[q_0,q_0+l_0].$$
\end{theorem}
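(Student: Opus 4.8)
The plan is to deduce the statement from a Caraiani--Scholze type torsion-vanishing theorem for an auxiliary unitary Shimura variety, in whose Borel--Serre boundary the locally symmetric space $X_K$ appears, and then to propagate the vanishing back down to $X_K$ through the boundary combinatorics. \textbf{Reduction to a lower-degree statement.} Each connected component of $X_K$ is an orientable noncompact manifold of dimension $D=4d-1$, so $H^{4d-1}(X_K,-)_\m=0$ automatically and Poincar\'e--Lefschetz duality gives a perfect pairing of $H^i(X_K,\V_\lambda/\varpi^m\V)$ with $H^{D-i}_c(X_K,\V_{\lambda^\vee}/\varpi^m\V)$ up to a Tate twist, $\lambda^\vee$ the dual highest weight. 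Purity of $\lambda$ together with the hypotheses ``$w$ even'' and ``$n_\tau\equiv n_{\tau c}\ (\mathrm{mod}\ 2)$'' forces the pair $(\lambda^\vee,\m^\vee)$ attached to $(\pi^\vee,p)$ to again satisfy all the hypotheses of the theorem, since twisting by a character preserves the property of having very large image. As very large image makes $\bar\rho_\m$ absolutely irreducible, $\m$ lies outside the support of the cohomology of the Borel--Serre boundary $\partial X_K^{BS}$, whose Hecke eigensystems all arise from the torus Levis of Borel subgroups of $\res_{F/\Q}\GL_2$ and so correspond to reducible pseudo-representations; hence $H^i_c(X_K,-)_\m\xrightarrow{\sim}H^i(X_K,-)_\m$ for any coefficient system with irreducible attached residual representation. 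It therefore suffices to prove, for all data satisfying the hypotheses, that
\[ H^i(X_K,\V_\lambda/\varpi^m\V)_\m=0\qquad\text{for all }i<q_0=d; \]
applying this to $(\lambda^\vee,\m^\vee)$ and dualizing yields the vanishing for $i>q_0+l_0=3d-1$.

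\textbf{The auxiliary Shimura variety.} Since $F=F^+F_0$ with $F_0$ imaginary quadratic, the quasi-split unitary similitude group $\tilde G=\mathrm{GU}(2,2)$ over $\Q$ attached to a split rank-$4$ Hermitian space for $F/F^+$ gives rise to a Shimura variety $\mathrm{Sh}_{\tilde K}$ of dimension $N=4d$. Its Siegel-type maximal parabolic $P\subset\tilde G$, the stabilizer of a maximal isotropic subspace, has abelian unipotent radical $N_P$ and Levi $M_P$ containing $\res_{F/\Q}\GL_2$ together with a one-dimensional central torus. Using that $K_p$ is hyperspecial and $p$ splits in $F_0$ (so $\tilde G$ is split at $p$), one chooses $\tilde K_p$ hyperspecial, a tame level $\tilde K^p$ matching $K$, and a dominant weight $\tilde\lambda$ for $\tilde G$ whose restriction to $M_P$ contains a twist of $\V_\lambda$: the parity hypotheses on $\lambda$ are exactly what makes such an integral $\tilde\lambda$ exist, while the bound $p>2d^2(d+1)(4-2w)$ places all the Hodge--Tate weights in play inside the Fontaine--Laffaille range, so that the $\varpi^m$-torsion coefficient systems on $\mathrm{Sh}_{\tilde K}$ behave well integrally. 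By the standard description of the Borel--Serre boundary, $\partial\mathrm{Sh}^{BS}_{\tilde K}$ contains a stratum $e(P)$ which, up to finite covers and a harmless torus factor $T$, is the total space of a proper bundle $\pi\colon e(P)\to X_K$ with fibre a compact nilmanifold for $N_P$.

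\textbf{Vanishing on the Shimura variety and descent.} Let $\tilde\m$ be a maximal ideal of the spherical Hecke algebra of $\tilde G$ above $\m$; its attached Galois representation is a twist of $\bar\rho_\m\oplus\bar\rho_\m^{c,\vee}$, a conjugate-self-dual four-dimensional representation. As every place of $F^+$ above $p$ splits in $F$, $\bar\rho_\m$ is unramified there, and --- combining very large image with $p$ large relative to the weights --- the Frobenius eigenvalues are in sufficiently general position that $\bar\rho_{\tilde\m}$ is \emph{decomposed generic} at all places above $p$ in the Caraiani--Scholze sense. Applying the Caraiani--Scholze torsion-vanishing machinery for the generic part of the cohomology of (non-compact) unitary Shimura varieties to $\mathrm{Sh}_{\tilde K}$ with coefficients in $\V_{\tilde\lambda}/\varpi^m\V$ and to its toroidal and minimal compactifications gives $H^i_c(\mathrm{Sh}_{\tilde K},-)_{\tilde\m}=0$ for $i>N$ and dually $H^i(\mathrm{Sh}_{\tilde K},-)_{\tilde\m}=0$ for $i<N$, and --- since the Baily--Borel boundary strata attached to $P$ are zero-dimensional --- a bound $H^i(\partial\mathrm{Sh}^{BS}_{\tilde K},-)_{\tilde\m}=0$ for $i$ below an explicit value $a$, obtained from the long exact sequence relating $H^*_c$, $H^*$ and $H^*(\partial)$ together with a refined analysis of the nearby cycles over the boundary. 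Localizing the stratification $\partial\mathrm{Sh}^{BS}_{\tilde K}=\bigsqcup_Q e(Q)$ at $\tilde\m$, the Borel stratum drops out (torus Levi), and the Siegel and Klingen maximal-parabolic strata occur disjointly after removing the closed Borel stratum, so $H^*_c(e(P),-)_{\tilde\m}$ is a direct summand of $H^*(\partial\mathrm{Sh}^{BS}_{\tilde K},-)_{\tilde\m}$. Running the Leray spectral sequence of $\pi$, Nomizu's theorem identifies $R^b\pi_*\V_{\tilde\lambda}$ with the Lie algebra cohomology $H^b(\n_P,\V_{\tilde\lambda})$, which Kostant's theorem writes as $\bigoplus_{w\in W^P,\ \ell(w)=b}\V^{M_P}_{w\cdot\tilde\lambda}$; restricting these $M_P$-representations to $\res_{F/\Q}\GL_2$ one gets $H^i_c(e(P),\V_{\tilde\lambda}/\varpi^m\V)_{\tilde\m}\cong\bigoplus_{w\in W^P}H^{\,i-\ell(w)}_c(X_K,\V_{\mu(w)}/\varpi^m\V)_\m\otimes H^{\bullet}(T)$ (the torus $T$ having cohomology spread over a bounded range of degrees), where $\mu(w_\star)=\lambda$ for a distinguished $w_\star\in W^P$. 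Since $H^*_c(X_K,-)_\m=H^*(X_K,-)_\m$, feeding the bound $a$ into the $w=w_\star$ summand yields $H^j(X_K,\V_\lambda/\varpi^m\V)_\m=0$ for $j<q_0$, because the span of $\ell(w)$ over $W^P$, the dimension of $T$, and the codimension $N-\dim X_K$ conspire so that the surviving window for $X_K$ has width exactly $l_0=2d-1$; with the reduction recorded above this finishes the proof.

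\textbf{Main obstacle.} The delicate point is this boundary analysis: obtaining a sharp value of $a$ --- i.e. a Caraiani--Scholze-type control of the boundary cohomology of $\mathrm{Sh}_{\tilde K}$ at the (necessarily Eisenstein, for $\tilde G$) ideal $\tilde\m$, through the behaviour of the nearby cycles over the low-dimensional Baily--Borel boundary --- and then matching the Kostant degree shifts along $W^P$ and the torus factor against it so that the vanishing lands exactly in the Borel--Wallach range $[q_0,q_0+l_0]$. This is precisely where the numerical condition on $p$, the parity conditions on $\lambda$, and the full strength of the ``very large image'' hypothesis (both to make $\bar\rho_{\tilde\m}$ decomposed generic at $p$ and to prune the boundary strata) are genuinely used.
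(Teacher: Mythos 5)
Your reduction to degrees $i<q_0$ and your choice of auxiliary object (the quasi-split $U(2,2)$-Shimura variety, its Siegel stratum fibering over $X_K$, Kostant/Nomizu to descend) do match the skeleton of the paper, which embeds $H^i(X_K,\V_\lambda/\varpi^m)_\m$ as a Hecke-equivariant direct summand of $H^i_{\partial}(\tilde X_{\tilde K},\V_{\tilde\lambda}/\varpi^m)_{\tilde\m}$ via [ACC, Thm.\ 4.2.1]. But the heart of your argument --- the claim that Caraiani--Scholze vanishing plus the long exact sequence of the pair (and an unspecified ``refined analysis of the nearby cycles'') yields $H^i(\partial\tilde X_{\tilde K},-)_{\tilde\m}=0$ for $i$ below some explicit $a$ --- is a genuine gap, and it is exactly the content of the theorem, not a consequence of known vanishing results. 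Caraiani--Scholze control $H^i$ \emph{below} the middle degree and $H^i_c$ \emph{above} it; feeding this into $\cdots\to H^i_c\to H^i\to H^i_\partial\to H^{i+1}_c\to\cdots$ gives only an isomorphism $H^i_\partial(\tilde X_{\tilde K},-)_{\tilde\m}\cong H^{i+1}_c(\tilde X_{\tilde K},-)_{\tilde\m}$ in low degrees, and $H^{i+1}_c$ localized at $\tilde\m$ has no reason to vanish there: $\tilde\m$ is Eisenstein for $\tilde G$ (its Galois representation is $\bar\rho_\m\oplus\bar\rho_\m^{c,\vee}\otimes\bar\epsilon^{-3}$), and the classes you are trying to kill --- those coming from $X_K$ itself --- live precisely in this boundary/compactly-supported cohomology. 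If your step worked, the theorem would follow from decomposed genericity alone, whereas the hypotheses ``very large image'', $\gcd$/parity conditions and $p>2d^2(d+1)(-2w+4)$ are needed for much more than putting weights in the Fontaine--Laffaille range.

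What the paper actually does in place of your bound $a$ occupies Sections 6--8: (i) using Pink's theorem, Kostant's formula, the minimal-compactification stratification and Lan--Suh vanishing, it shows that for $i<d$ the semisimplified $G_{F_0}$-module $H^i_\partial(\tilde X_{\tilde K},\V_{\tilde\lambda}/\varpi)[\tilde\m]$ is a direct sum of characters; (ii) Wedhorn's Eichler--Shimura congruence relation makes the characteristic polynomial of the twisted tensor induction $(\otimes\Ind_F^{F_0})(\wedge^2\bar\rho_{\tilde\m}^\vee)(-5)$ annihilate it; (iii) the very-large-image hypothesis, a Goursat/Mackey analysis of the image over $G_{\hat F}$, and a counting argument for solutions of $a^k=a^jx^l \pmod p$ (this is where $p>2d^2(d+1)(-2w+4)$ and $\gcd(p-1,w+1)=1$ enter, after a twist) pin the possible characters down to three explicit ones; (iv) two are excluded by comparing Fontaine--Laffaille weights (Faltings, Lan--Polo) with the Kostant combinatorics of $H^{i+1}_c$, and the last by computing its multiplicity as a coherent cohomology group on the toroidal compactification, applying Serre duality, and invoking Caraiani--Scholze at the \emph{dual} ideal $\tilde\m^\vee$ and dual weight $\tilde\mu$. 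None of these steps is present in your proposal, and your ``main obstacle'' paragraph is precisely this missing core; as written, the argument cannot be completed by the results you cite. (Minor points: decomposed genericity is a condition at an auxiliary prime $\ell\neq p$, not at places above $p$; and the reduction to partially regular weights in the paper is done by an explicit character twist, Corollary 8.4, rather than by parity alone.)
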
  

Note that the assumption of very large image implies that $\bar{\rho}_\m$ is absolutely irreducible, in other words, that the maximal ideal $\m$ is non-Eisenstein.
To recall that the image of Galois is very large, we call our maximal ideal strongly non-Eisenstein.
Note also that we allow the weight $\lambda$ to be non-regular so that the cuspidal representation $\pi$ can be a base change from a subfield $F_1$
 of $F$ of a cuspidal representation $\pi_1$, provided the corresponding maximal ideal $\m_1$ is strongly non-Eisenstein.

\begin{remark}
By Poincar\'e duality, it suffices to prove the vanishing for $i<q_0=d$ (see Proposition \ref{poincare} in the text). 
\end{remark}

\begin{remark}
There are many vanishing results after localizing at non-Eisenstein maximal ideal for $l_0 =0$, such as: Mokrane-Tilouine \cite{MT} 
for $G=GSp_{2g}$, Dimitrov \cite{dim} $G=\res_{F/\Q} \GL_2$ where $F$ is a totally real field, Emerton-Gee \cite{EG} for $G=U(2,1)$, 
Caraiani-Scholze \cite{CS} for compact unitary Shimura varieties and $G = \res_{F/\Q}U(n,n)$ where $F$ is a totally real field.
\end{remark}

Recall that $\V_\lambda$ denotes the local system in  $\O$-modules associated to the representation $\V_\lambda$ of highest weight $\lambda$.

 \begin{corollary}
Let $\pi$ be as above, with cohomological weight $\lambda=(m_\tau,n_\tau)_\tau$ and purity weight $w$.
Let $p$ and $\m$ as above. 
 %Let $\pi$ be a cuspidal automorphic 
 %representation of $\GL_2 (\A_F)$ with level $K$, cohomological weight $
 %\lambda$ and purity weight $w$. Assume that $p$ splits in $F_0$, $K_p$ is hyperspecial and $\pi$ is unramified 
 %outside of $S$. Let $\m \subset \T^S$ be the maximal 
 %ideal of the Hecke algebra associated to $(\pi,p)$.

Assume that $w$ is even, $n_\tau = n_{\tau c} \text{ (mod }2)$ for all $\tau \in \Hom(F,E)$, $p>2d^2(d+1)(-2w+4)$ and that $\overline{\rho}_\m$ has very large image.
 Then, for any $m\geq 1$, 
$$H^i (X_K, \V_\lambda)_\m=0\quad\mbox{\rm for all}\,\,i \notin[q_0,q_0+l_0].$$
\end{corollary}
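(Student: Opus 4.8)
The plan is to deduce the integral vanishing from the torsion vanishing of the Theorem by a short dévissage, once the relevant finiteness is in place. Fix a degree $i \notin [q_0, q_0+l_0]$ and set $M := H^i(X_K, \V_\lambda)_\m$. I would first check that $M$ is a finitely generated $\O$-module. Since $X_K$ is homotopy equivalent to its Borel--Serre compactification $\overline{X}_K$, a compact manifold with corners, $H^i(X_K,\V_\lambda) = H^i(\overline{X}_K,\V_\lambda)$ is finitely generated over the Noetherian ring $\O$. The Hecke algebra $\T^S$ acts on $\bigoplus_i H^i(X_K,\V_\lambda)$ through a module-finite $\O$-subalgebra $\T'$ of $\bigoplus_i \End_\O H^i(X_K,\V_\lambda)$; because $\O$ is a complete local Noetherian ring, $\T'$ is semilocal and $\varpi$-adically complete, hence a finite product of complete local rings indexed by its maximal ideals. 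Consequently localization at $\m$ amounts to projecting onto one of these factors, so $M$ is a direct summand of $H^i(X_K,\V_\lambda)$ and in particular finitely generated over $\O$.

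Next I would feed in the short exact sequence of local systems
$$ 0 \longrightarrow \V_\lambda \xrightarrow{\ \varpi\ } \V_\lambda \longrightarrow \V_\lambda/\varpi\V_\lambda \longrightarrow 0, $$
which is equivariant for the action of $\T^S$. Passing to cohomology and localizing at $\m$ (localization being exact) produces an exact sequence
$$ H^i(X_K,\V_\lambda)_\m \xrightarrow{\ \varpi\ } H^i(X_K,\V_\lambda)_\m \longrightarrow H^i(X_K, \V_\lambda/\varpi\V_\lambda)_\m . $$
By the Theorem applied with $m = 1$, the right-hand term vanishes for our chosen $i$, so multiplication by $\varpi$ is surjective on $M$. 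Since $M$ is a finitely generated module over the discrete valuation ring $\O$ with maximal ideal $(\varpi)$, Nakayama's lemma gives $M = 0$. As $i \notin [q_0,q_0+l_0]$ was arbitrary, the Corollary follows. (Alternatively one can invoke the Theorem for all $m \geq 1$ together with the Krull intersection theorem, but the case $m=1$ already suffices and avoids needing the bound in degree $i-1$, which would otherwise fail at the boundary $i = q_0+l_0+1$.)

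The argument is entirely formal; the only step deserving care is the verification that $H^i(X_K,\V_\lambda)_\m$ is an honest finitely generated $\O$-module, which is where completeness of $\O$ enters — it lets the Hecke algebra break up into local factors, so that localizing at $\m$ is the same as splitting off a direct summand. Beyond this bookkeeping I do not expect any genuine obstacle.
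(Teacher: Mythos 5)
Your argument is correct and is essentially the paper's own deduction: the paper obtains this corollary from the Main Theorem via exactly the long exact sequence attached to $0\to\V_\lambda\xrightarrow{\ \varpi\ }\V_\lambda\to\V_\lambda/\varpi\to 0$ (see the proof of Corollary \ref{cormin}), with surjectivity of $\varpi$ on $H^i(X_K,\V_\lambda)_\m$ for $i\notin[q_0,q_0+l_0]$ and then Nakayama plus finite generation over $\O$ left implicit. Your preliminary paragraph (Borel--Serre compactification for finiteness, and the splitting of the finite Hecke $\O$-algebra into complete local factors so that localization at $\m$ is a direct summand) merely makes explicit what the paper takes for granted, and the $m=1$ case of the theorem indeed suffices.
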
  

This corollary follows easily (see Corollary \ref{cormin}) from the main theorem. Note that $H^{q_0} (X_K, \V_\lambda)_\m$ is torsion-free, 
but it is not the case in general for $i>q_0$ (
and in particular for $i=q_0+\ell_0$). 

Finally, recall that Calegari and Geraghty introduced Conjectures A (see \cite[Section 5.3]{CG18}) and B (see \cite[Section 9.1]{CG18}) about the existence of Galois representations over localized Hecke algebras $\T^S$ and $\T^S_Q$ satisfying local global compatibilities at all primes, which we denote by (LGC). 
Furthermore they assume in Conjecture B that $H^i(X_K,\V_\lambda/\varpi)_\m=0$,
for $i\notin [q_0,q_0+\ell_0]$. This is our Main Theorem. Moreover, if $p$ splits completely in $F$, the existence of the Galois representations defined over $T^S$ and $\T^S_Q$ as desired (except (LGC)) has been established in \cite[Theorem 6.1.4]{CGH}. Note that Calegari-Geraghty established (LGC) at primes in $Q$ by \cite[Lemma 9.6]{CG18} for $n=2$ and that results towards $(LGC)$ at other primes are proven in \cite{ACC}. We deduce from \cite[Proposition 6.4]{CG18} and from our main theorem
the result:
 
\begin{corollary}
  Under the assumptions of Theorem \ref{main}, if we assume (LGC), then
  $H^{q_0+\ell_0}(X_K,\V_\lambda)_\m$ is free of rank one over $\T^S$ and 
$H^{q_0}(X_K,\V_\lambda)_\m\cong \Hom(\T^S,\O)$ as $\T^S$-module. 
 \end{corollary}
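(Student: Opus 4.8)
The plan is to deduce the final Corollary from the Main Theorem by invoking the machinery of Calegari--Geraghty, exactly along the lines of \cite[Proposition 6.4]{CG18}. The starting point is the observation recorded in the Remark after the Main Theorem: by Poincaré duality it suffices to understand one end of the cohomology, and the two statements about $H^{q_0}$ and $H^{q_0+\ell_0}$ are dual to one another, so it is enough to treat the top degree $H^{q_0+\ell_0}(X_K,\V_\lambda)_\m$ and show it is free of rank one over $\T^S$; the statement for $H^{q_0}$ then follows by applying $\Hom(-,\O)$ together with the perfectness of Poincaré duality and the torsion-freeness of $H^{q_0}(X_K,\V_\lambda)_\m$ noted after the first Corollary.

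First I would set up the patching/Taylor--Wiles argument in the form used by Calegari--Geraghty: choose Taylor--Wiles primes $Q_n$ as in \cite{CG18}, form the auxiliary Hecke algebras $\T^S_{Q_n}$ acting on the cohomology of $X_{K_0(Q_n)}$ with coefficients in $\V_\lambda/\varpi^m$, and assemble the patched module $H_\infty$ together with the patched ring $R_\infty$ acting on it through the patched deformation ring. The hypotheses of the Main Theorem have been tailored so that the input needed for this is available: $\overline\rho_\m$ has very large image, hence in particular is absolutely irreducible with big enough image to run the Taylor--Wiles--Kisin method, $\m$ is (strongly) non-Eisenstein, and the Galois representations over $\T^S$ and $\T^S_{Q_n}$ with the expected local-global compatibility properties exist by \cite[Theorem 6.1.4]{CGH} together with the assumed (LGC). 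The crucial numerical ingredient supplied by the Main Theorem is that $H^i(X_K,\V_\lambda/\varpi^m)_\m$ is concentrated in the single-degree-wide situation after passing to the ends, i.e. the defect $\ell_0$ is ``absorbed'' so that the patched module $H_\infty$ is, up to the expected codimension count, a faithful Cohen--Macaulay module of the right dimension over $R_\infty$.

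Next I would run the commutative-algebra endgame of \cite[Section 6]{CG18}: the patched ring $R_\infty$ is a power series ring over the relevant local deformation rings, $H_\infty$ is a balanced $R_\infty$-module concentrated in the expected codimension, and a standard depth/Auslander--Buchsbaum argument forces $H_\infty$ to be free over $R_\infty$; since $R_\infty$ is formally smooth of the expected dimension, $H_\infty$ is in fact free of rank one. Descending through the patching diagram — taking the quotient by the augmentation ideal of the power series variables and the patching ideal — yields that $H^{q_0+\ell_0}(X_K,\V_\lambda/\varpi)_\m$ is free of rank one over $\T^S/\varpi$, and then the usual lifting argument (freeness modulo $\varpi$ plus the structure of $H^{q_0+\ell_0}(X_K,\V_\lambda)_\m$ as a finitely generated $\O$-module, together with $\T^S$ acting faithfully) upgrades this to: $H^{q_0+\ell_0}(X_K,\V_\lambda)_\m$ is free of rank one over $\T^S$. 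Applying $\Hom_\O(-,\O)$ and Poincaré duality (Proposition \ref{poincare}) then gives $H^{q_0}(X_K,\V_\lambda)_\m\cong\Hom(\T^S,\O)$ as a $\T^S$-module, which is the remaining assertion.

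The main obstacle, and the reason this is only a corollary \emph{conditional on} (LGC), is establishing the precise local-global compatibility needed to identify the patched ring $R_\infty$ with a power series ring over the local deformation rings of the right dimension — without (LGC) one does not know that the Galois action on the patched module factors through a deformation ring with the expected tangent space, and the dimension bookkeeping that makes the Cohen--Macaulay/freeness argument go through collapses. Everything else — the vanishing outside $[q_0,q_0+\ell_0]$, the torsion-freeness of $H^{q_0}$, the choice of Taylor--Wiles primes, and the commutative algebra — is either our Main Theorem or is by now standard, so the proof really is just the assembly of \cite[Proposition 6.4]{CG18} with our theorem plugged in as the required cohomological vanishing hypothesis.
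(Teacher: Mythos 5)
Your overall strategy is the same as the paper's: the Main Theorem supplies exactly the vanishing hypothesis of Calegari--Geraghty's Conjecture B, the Galois representations over $\T^S$ and $\T^S_Q$ come from \cite{CGH} together with the assumed (LGC), and the paper then simply cites \cite[Theorems 6.3 and 6.4]{CG18} (rather than re-running the Taylor--Wiles patching internally, as you sketch) and handles $H^{q_0}$ by Poincar\'e duality, just as you do. So the architecture is fine; re-deriving the patching is not a different route, only an unpacking of the citation.

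There is, however, one genuine gap: your justification of the \emph{rank one}. You write that once $H_\infty$ is shown free over $R_\infty$, the fact that ``$R_\infty$ is formally smooth of the expected dimension'' forces the rank to be one. This does not follow: freeness over a (formally smooth, or any) patched ring says nothing about the rank, which is a generic-fibre invariant --- in the classical Taylor--Wiles situation for modular curves the analogous module is free of rank two, for instance. To pin the rank down one must compute the multiplicity of each automorphic representation in the top-degree cohomology, i.e.\ use that $\dim H^{q_0+\ell_0}(\mathfrak{g},K;\pi_\infty\otimes V_\lambda)=\binom{\ell_0}{\ell_0}=1$ (Borel--Wallach, Chapter III) together with multiplicity one for $\GL_2$, so that after inverting $p$ the localized cohomology in degree $q_0+\ell_0$ is free of rank one over $\T^S\otimes E$; only then does freeness of the patched module descend to freeness of rank one over $\T^S$. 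This archimedean multiplicity computation is exactly the supplementary argument the paper inserts after citing \cite{CG18}, and it is absent from your proposal. A smaller point, worth a sentence in a final write-up: the duality step uses Proposition \ref{poincare}, which exchanges $\m$ with $\m^\vee$, $\lambda$ with $\lambda^\vee$, and ordinary with compactly supported cohomology, so one should apply the top-degree freeness to the dual datum $(\lambda^\vee,\m^\vee)$ (the hypotheses are symmetric under this exchange) and use that compactly supported and ordinary cohomology agree after localizing at the non-Eisenstein ideal.
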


See comments before Theorem \ref{CG} for more details on the status of Conjecture B of \cite{CG18}.

Let us give a sketch of proof of the Main Theorem. First, all considerations until the end of Section 6 are valid for $G_n=\res_{F/\Q} GL_n$ for $n$ arbitrary. 
From Section 7 on, we specialize $n$ to be $2$.
We hope to generalize our result to larger values of $n$ later.

Let $X_K$ be the locally symmetric space of $G_n$ with level $K$ and $\tilde{X}_{\tilde{K}}$ be the locally symmetric space of $\widetilde{G}_n:=\res_{F/\Q}U(n,n)$  
with level $\widetilde{K}$. Note that $\tilde{X}_{\tilde{K}}$ is a PEL Shimura variety defined over $F_0$. 
We fix a non-Eisenstein maximal ideal $\m \subset \T^S$. We can define a maximal ideal of Hecke algebra of $\widetilde{G}_n$, 
$\widetilde{\m} = \bS^{-1}(\m) \subset \tilde{\T}^S$ where $\bS\colon \tilde{\T}^S\to {\T}^S$ is the Satake transform recalled in Section 2.
 
By choosing $\widetilde{K}$ and $\widetilde{\lambda}$ adapted to the choice of $K$ and $\lambda$,  we can define by Theorem 4.2.1 of \cite{ACC} 
a Hecke-equivariant embedding $H^i(X_K, \V_\lambda/\varpi)_\m \hookrightarrow H^{i}_{\partial}(\tilde{X}_{\tilde{K}},\V_{\tilde{\lambda}}/\varpi)_{\tilde{\m}}$ 
as explained in Subsection 5.2.

We shall prove for any $n$ that $H^i(X_K, \V_\lambda/\varpi)_\m = 0$ for $i <d$. 
Note that $d$ is not equal to $q_0$ for $n>2$, but happens to be equal to $q_0$ when $n=2$.
For this purpose, it is enough to prove that 
$ H^{i}_{\partial}(\tilde{X}_{\tilde{K}},\V_{\tilde{\lambda}}/\varpi)_{\tilde{\m}} = 0 $ for $i < d$.
By Nakayama's Lemma (or d\'evissage), it suffices to show $ H^{i}_{\partial}(\tilde{X}_{\tilde{K}},\V_{\tilde{\lambda}}/\varpi)[{\tilde{\m}}]= 0 $ for $i < d$. 

For this, the first step is Theorem \ref{1-dim}; it is valid for weights $\widetilde{\lambda}$ which are mildly regular 
(as defined in section 6) and sufficiently small with respect to $p$. 
This theorem states that
for $i<d$, the Galois semisimplification of the boundary cohomology
$ H^{i}_{\partial}(\tilde{X}_{\tilde{K}},\V_{\tilde{\lambda}}/\varpi)[{\tilde{\m}}]$
viewed as a $G_{F_0}$-representation where $G_{F_0}$ is the absolute Galois group of $F_0$, is a direct sum of characters. 
In our proof, we use the main result of Pink's thesis \cite{pink} and Kostant formula \cite{kos} for computing 
 the spectral sequence associated to the boundary stratification of the minimal compactification of $\tilde{X}_{\tilde{K}}$.
 Then, we use vanishing results of \cite{LS}.
Note that for $n=2$, the mild regularity is automatic.

By the generalized Eichler-Shimura relations due to Wedhorn (Theorem \ref{ES}, Section 7), the Hecke polynomial $H_{v_0}$ at an unramified place $v_0$ of 
$F_0$ annihilates $H^{\bullet}_{\partial}(\tilde{X}_{\tilde{K}},\V_{\tilde{\lambda}}/\varpi)$. Moreover, $H_{v_0}$  modulo $\widetilde{\m}$ coincides with
the characteristic polynomial 
of $T(\Frob_{v_0})$ where $T$ denotes the twisted tensor induction representation 
$$T=(\otimes \Ind_{F}^{F_0}) ((\wedge^n \bar{\rho}_{\tilde{\m}}^{\vee}) \otimes \bar{\rho}_\psi^\vee (n(n+1)/2 -2n^2)$$
where $
\bar{\rho}_{\tilde{\m}} \cong \bar{\rho}_\m \oplus \bar{\rho}_{\m}^{c,\vee} \otimes \bar{\epsilon}^{1-2n}$ 
and $\rho_\psi$ is the Galois representation associated to the central character of $\rho_{\tilde{\m}}$ ($\epsilon$ is the cyclotomic character of $F$).

Hence the characteristic polynomial of $T$ 
annihilates $H^{i}_{\partial}(\tilde{X}_{\tilde{K}},\V_{\tilde{\lambda}}/\varpi)[\tilde{\m}]$ 

From now we assume that $n=2$ and $\lambda$ satisfies the partial regularity condition (PR) (see subsection 7.2). This won't restrict our main result because
Corollary \ref{weight} 
shows that the Main Theorem for dominant weights $\lambda$ satisfying the condition (PR) implies the Main Theorem
 for any pure dominant weight $\lambda = (m_\tau ,n_\tau)_{\Hom(F,E)}$ where $n_\tau \equiv n_{\tau c} \text{ (mod } 2)$ for any $\tau \in \Hom(F,E)$. 

In subsection 7.3, we assume that $\m$ is strongly non-Eisenstein and we study the Galois representation 
$(\otimes \Ind_{F}^{F_0}) ((\wedge^2 \bar{\rho}_{\tilde{\m}}^{\vee})(-5)$ restricted to $G_{\hat{F}}$ where $\hat{F}$ 
is the normalization of $F$ in $\bar{\Q}$. We prove that if $\gcd (p-1,w+1)=1$, the restriction of 
$(\otimes \Ind_{F}^{F_0}) ((\wedge^2 \bar{\rho}_{\tilde{\m}}^{\vee})(-5)$ to $G_{\hat{F}}$ factors through:
$$H(\F_q):= \{ (M_i)_{i=1}^d \in \prod_{i=1}^d \GL_2(\F_q) \vert \exists \delta \in \det(G_F), \forall i , \det M_i = \delta \} $$  

In subsection 8.2, we observe that, by "almost incompatibility" of the Fontaine-Laffaille weights of the twisted tensor induction representation $T$'s 
and those of the boundary cohomology, there are only three characters $\chi_i$, $i=1,2,3$, of $G_{F_0}$ which can occur simultaneously 
in the representation $T$ and in the boundary cohomology groups 
$H^{i}_{\partial}(\tilde{X}_{\tilde{K}},\V_{\tilde{\lambda}}/\varpi)[\tilde{\m}]$ 
for $i<d$, viewed as $G_{F_0}$-representations. They are given explicitely as
$$\chi_1 :=(\otimes \Ind_{F}^{F_0})
((\wedge^2\bar{\rho}_{\m}^{\vee})(-5)),\quad \chi_2=(\otimes \Ind_{F}
^{F_0}) ((\wedge^2 
\bar{\rho}_{\m}^{c})(1))\, \mbox{\rm and}\,\,  \chi_3:=(\otimes \Ind_{F}^{F_0})     
(\epsilon^{\otimes -2}).$$

Actually, in subsection 3.4, we compute the Fontaine-Laffaille weights of the $G_{F_0}$-representations 
$H^{i}_{\partial}(\tilde{X}_{\tilde{K}},\V_{\tilde{\lambda}}/\varpi)$.
 By comparison of these weights to those of the characters $\chi_1 , \chi_2 , \chi_3$,  we see that the characters $\chi_2$ and $\chi_3$ 
cannot occur in  $H^{i}_{\partial}(\tilde{X}_{\tilde{K}},\V_{\tilde{\lambda}}/\varpi)[{\tilde{\m}}]$ for $i<d$. 
Therefore the only character which can occur in  $H^{i}_{\partial}(\tilde{X}_{\tilde{K}},\V_{\tilde{\lambda}}/\varpi)[{\tilde{\m}}]$ is $\chi_1$.
Its Fontaine-Laffaille weight $d \cdot w$ is the minimal Fontaine-Laffaille weight which can occur
 in $H^{i}_{\partial}(\tilde{X}_{\tilde{K}},\V_{\tilde{\lambda}}/\varpi)[{\tilde{\m}}]$.

In subsection 8.3, we compute the multiplicity of the Fontaine-Laffaille weight $d\cdot w$ inside 
$H^{i}_{\partial}(\tilde{X}_{\tilde{K}},\V_{\tilde{\lambda}}/\varpi)_{\tilde{\m}}$. We find it is equal to the
$\kappa_\p$-dimension of $H^{i+1}(\tilde{X}_{\tilde{K},\kappa_\p}^{tor},\W_{\tilde{\lambda}}^{sub})_{\tilde{\m}}$ 
($\kappa_\p = \F_p$ is the residue field of $\p\subset \O_{F_0}$).
By Serre duality,  $H^{4d-i-1} 
(\tilde{X}_{\tilde{K},\p}^{\text{tor}}, 
\W_{-2\rho_{nc}- w_{0,G} \tilde{\lambda},\p}^{\text{can}})_{\tilde{\m}^\lor}$ is dual to
$H^{i+1}(\tilde{X}_{\tilde{K},\kappa_\p}^{tor},\W_{\tilde{\lambda}}^{sub})_{\tilde{\m}}$; thus, the 
$\kappa_\p$-dimension of this coherent cohomology group is equal to the multiplicity of the Fontaine-Laffaille weight 
$d \cdot w$ in $H^{4d-i-1}(\tilde{X}_{\tilde{K}},\V_{\tilde{\mu}}/\varpi)_{\tilde{\m}^\vee}$,
where $\tilde{\mu} := (-n_\tau +2 , -m_\tau+2, m_\tau -2, n_\tau -
2)_{\bar{\tau}\in \tilde{I}}$. Now, a vanishing result of Caraiani and Scholze \cite{CS} 
shows that $H^{4d-i-1}(\tilde{X}_{\tilde{K}},\V_{\tilde{\mu}}/\varpi)_{\tilde{\m}^\vee} = 0$. Therefore, $H^{4d-i-1} 
(\tilde{X}_{\tilde{K},\p}^{\text{tor}}, 
\W_{-2\rho_{nc}- w_{0,G} \tilde{\lambda},\p}^{\text{can}})_{\tilde{\m}^\lor}$ is zero.
\\
\\
\\
\textbf{Acknowledgement.} I would like to warmly thank my advisor Jacques Tilouine for
introducing me to the subject and for many invaluable discussions.

\subsection{Notations}
In this section, we follow rather closely the notations of  \cite[Section 2]{ACC}.

Let $F=F^+F_0$ be a CM field which is the compositum of an imaginary quadratic
field $F_0$ and a totally real field $F^+$. Let $\Delta_F$ be the discriminant of $F$.
We denote by $c$ the complex conjugation of $F$. Let $d = [F^+ : \Q] $.  For a number field $K$, let $K_\A$ 
be the adele ring of $K$, let $K_f=\widehat{\Z}\otimes K$ its finite part and $K_\infty=\R\otimes K$ its infinite part so that $K_\A=K_f\times K_\infty$. 
If $H$ is an algebraic group defined over number field $K$, we put $H_\A=H(K_\A)$, $H_f=H(K_f)$ and $H_\infty=H(K_\infty)$ so that 
$H_\A=H_f\times H_\infty$. If $S$ is a set of finite places of $K$ and $U \subset H_f$, we put $U_S := \prod_{v\in S} U_v$ and $U^S := \prod_{v \notin S} U_v$.

Let $\Psi_n$ be the matrix with $1$'s on the anti-diagonal and $0$'s elsewhere and set
$$ J_n = 
\begin{pmatrix}
0 & \Psi_n \\
-\Psi_n & 0
\end{pmatrix}$$
Let $\tilde{H}$ resp. $\underline{\tilde{H}}$ the quasi-split unitary group scheme over $\O_{F^+}$ whose points on a ring $R$ are given by

$$ \tilde{H} (R) = \{ g \in GL_{2n}(R \otimes_{\O_{F^+}} \O_F ) | ^tg J_n g^c = J_n \}.$$
resp.
$$ \underline{\tilde{H}} (R) = \{ (g,c) \in GL_{2n}(R \otimes_{\O_{F^+}} \O_F ) \times R^\times | ^tg J_n g^c = c \cdot J_n \}$$

If $\bar{v}$ is a place of $F^+$ which splits in $F$, then a choice of place $v|\bar{v}$ of $F$ 
determines a canonical isomorphism $\iota_v : \tilde{H}(F^{+}_{\bar{v}}) \cong GL_{2n}(F_v)$. Denote $\tilde{G} := \res_{\O_{F^+}/\Z} \tilde{H}$ and $\underline{\tilde{G}} := \res_{\O_{F^+}/\Z} \underline{\tilde{H}}$.

We write $\widetilde{T} \subset \widetilde{B} \subset \tilde{H}$ for the subgroups consisting of, respectively, the
diagonal and the upper-triangular matrices in $\tilde{H}$. Similarly we write $H \subset P \subset \tilde{H}$ 
for the Siegel Levi and parabolic subgroups consisting of matrices which are diagonal, respectively, upper triangular by $n\times n$-blocks. 
%upper-triangular matrices with blocks of size $n \times n$ 
We have $H \cong \res_{\O_F/\O_{F^+}} \GL_{n}$.  
We introduce $B=\widetilde{B}\cap H$ and we remark that $T=\widetilde{T}\cap H=\widetilde{T}$. 
Let us define $G = \res_{O_{F^+}/\Z} H$.

Let $K\subset G_f$ be a neat compact open subgroup and $X_K$ be the adelic locally symmetric space of level $K$:
$$X_K=G(\Q)\backslash G_\A/K \Q^\times_\infty K_\infty.$$

Let $\tilde{K} \subset \tilde{G}_f$ be a neat compact open subgroup of $\widetilde{G}_f$ compatible with $K$ in the sense that 
$G_f\cap \widetilde{K}=K.$
We also consider the unitary Shimura variety 
$$\widetilde{X}_{\widetilde{K}}=\widetilde{G}(\Q)\backslash\widetilde{G}_\A/\widetilde{K}\widetilde{K}_\infty.$$
and let $\underline{\tilde{K}} \subset \underline{\tilde{G}}_f$ such that $\underline{\tilde{K}} \cap \tilde{G}_f = \tilde{K}$. We also consider the unitary Shimura variety 
$$\widetilde{X}_{\underline{\widetilde{K}}}=\underline{\widetilde{G}}(\Q)\backslash \underline{\widetilde{G}}_\A/\underline{\widetilde{K}}\underline{\widetilde{K}}_\infty.$$

The real dimension of $X_K$ is equal to $d \cdot \dim GL_n(\C) - d \cdot \dim U(n) - 1 = 2 n^2 d - n^2 d -1 = n^2 d -1$ and the real dimension of $\tilde{X}_{\tilde{K}}$ is equal to $d \cdot \dim U(n,n) - d \cdot \dim (U(n) \times U(n))  =4 n^ d - 2 n^2 d = 2 n^2 d$.

We denote by $S(\widetilde{K})$ the set of places $\bar{v}$ of $F^+$ where $\widetilde{K}_{\bar{v}}$ is not hyperspecial.
Let $S^\prime$ be the set of places of $F$ above those of $S(\widetilde{K})$.

 Let $p$ be a rational prime which splits in $F_0$, say, $(p)=\p\p^c$. Let  $S_p$, resp. $\bar{S}_p$, be the set of places of $F$, resp. $F^+$ above $p$; 
We put $\bar{S}=S(\widetilde{K})\cup \bar{S}_p$ and $S=S^\prime\cup S_p$. Note that $S$ is stable by complex conjugation.
Let $E$ be a finite extension
of $\Q_p$ inside $\bar{\Q}_p$
large enough to contain the images of all the embeddings of $F$ in $\bar{\Q}_p$. 
Let $\O$ be its valuation ring, $\varpi$ a uniformizing parameter and $\kappa=\O/(\varpi)$ be its residue field. 
Note that 
$$\widetilde{G}_p=\widetilde{G}\times \Q_p=\res_{F^+/\Q}\GL_{2n}\times \Q_p.$$

Let $\widetilde{\lambda}\in  (\Z^{2n}_+)^{\Hom(F^+,E)}$ be a dominant weight for $(\widetilde{G}_p,\widetilde{B}_p,\widetilde{T}_p)$.
 Let $\lambda \in  (\Z^n_+)^{\Hom(F,E)}$ be a dominant weight for $(G,B,T)$. 

 Let $\tilde{I} \subset \Hom(F, E)$ be a subset such that
$\Hom(F, E) = \tilde{I} \amalg \tilde{I}^c$. We wright $\tilde{\tau}$ for unique element of $I$ which extends $\tau \in \Hom (F^+ , E)$ to F. 

\begin{definition} We say that the dominant weights $\widetilde{\lambda}\in  (\Z^{2n}_+)^{\Hom(F^+,E)}$ and 
$\lambda \in  (\Z^n_+)^{\Hom(F,E)}$ are corresponding if $\tilde{\lambda}_\tau = 
(-\lambda_{\tilde{\tau}c,n},-\lambda_{\tilde{\tau}c,n-1},\cdots, -\lambda_{\tilde{\tau}c,1} , \lambda_{\tilde{\tau},1}, \lambda_{\tilde{\tau},2},\cdots , \lambda_{\tilde{\tau},n})$

\end{definition}
Let $\V_\lambda$, resp. $\V_{\widetilde{\lambda}}$, be the finite free $\O$-module with algebraic action of $G_p$, resp. $\widetilde{G}_p$, of highest weight $\lambda$
resp. $\widetilde{\lambda}$. It defines a local system on $X_K$, resp. $\widetilde{X}_{\widetilde{K}}$, which we still denote $\V_\lambda$, resp. $\V_{\widetilde{\lambda}}$. 
There are {\it a priori} several choices for the modules $\V_\lambda$ and $\V_{\widetilde{\lambda}}$ 
(see corollary 1.9 of \cite{PT}). But since $p$ will be large with respect to $\lambda$, they all coincide.
\newpage

\section{Hecke algebras}
 In this section we will define Hecke algebras for $H$ and $\tilde{H}$ and important elements in these Hecke algebras which we will use continuously in this note. 
 
 \begin{definition}
  Let $G$ be a locally profinite group, and let $U \subset G$ be an open compact subgroup. We write $\mathcal{H} (G, U)$ for the
set of compactly supported, $U$-biinvariant functions $f : G \to \Z$.
\end{definition}

Let $\ell\neq p$ be a rational prime.
Let $K$ be a finite extension of $\Q_\ell$  with parameter $\varpi_K$ and residue field $\kappa$ of cardinality $q=\ell^m$. Let $\underline{G}$ be a quasi-split 
reductive group scheme over $\O_K$, 
$\underline{S}$ is a maximal $\O_K$-split torus of $\underline{G}$, $\underline{T}$ is the maximal torus which centralizes 
$\underline{S}$, and $\underline{B}$ is a Borel subgroup containing $\underline{T}$.
Let $\underline{N}$ be the unipotent radical of $\underline{B}$.  The Weyl group $W(G,T)$ acts on $\underline{T}$ by conjugation.
Let $dn$ be the Haar measure on $\underline{N}(K)$ such that $dn(\underline{N}(\O_K))=1$. It takes values in $\Z[q^{\pm 1}]$.
Let $\delta_B : \underline{T}(K) \to  q^{\Z}$ be the modulus of $B$,
given by the formula $t \mapsto |\det_K \text{Ad}_{N} (t)|_{K}$ ($|\cdot|_K$ is the absolute value of $K$ normalized by $\vert \varpi_K\vert_K=q^{-1}$)
The Satake homomorphism
 $$N\colon f \mapsto ( t \mapsto \delta_B (t)^{1/2} \int_{n \in\underline{N}(K)}  f(tn) dn)$$

%\begin{equation}\label{sat}
 %N : \mathcal{H} (\underline{G}(K), \underline{G}(\O_K)) \otimes \R \to  
%\mathcal{H}(\underline{T}(K),\underline{T}(\O_K))^{W(G,T)}\otimes \R
%\end{equation}

gives us the isomorphism:

$$N: \mathcal{H} (\underline{G}(K), \underline{G}(\O_K)) \otimes \Z [q^{\pm 1/2}] \to  
\mathcal{H}(\underline{T}(K),\underline{T}(\O_K))^{W(G,T)}\otimes \Z[q^{\pm 1/2}]$$

If there exist a character $\chi_G \in X^\ast (G)$ such that the character $t \mapsto \delta_B (t)^{1/2} \chi_G (t)^{1/2}$ takes values in $q^{\Z}$, then we get an isomorphism 

$$N^\prime :  \mathcal{H} (\underline{G}(K), \underline{G}(\O_K)) \otimes \Z [q^{\pm 1}] \to  
\mathcal{H}(\underline{T}(K),\underline{T}(\O_K))^{W(G,T)}\otimes \Z[q^{\pm 1}]$$

given by formula $f \mapsto (t \mapsto |\chi_G(t)|^{1/2} N(f)(t)$.\\
\\
Let us to consider $G = \tilde{H} \times \O_{F^+_{\bar{v}}}$.

If $\bar{v}$ splits in $F$, then there is an canonical isomorphisms
 $\iota_v: \tilde{H}(F^+_{\bar{v}})  \cong \GL_{2n} (F_v)$ and $T(F^+_{\bar{v}}) \cong F_v^{2n}$. Therefore, The Weyl group $W(\tilde{H}_{\bar{v}},T_{\bar{v}}) = S_{2n}$ 
 and the Hecke algebra $\mathcal{H}(T(F^+_{\bar{v}}),T(\O_{F^+_{\bar{v}}}) ) \cong \Z[Y_1^{\pm1},Y_2^{\pm1},\cdots,Y_{2n}^{\pm1}]$.

If $\bar{v}$ is inert in $F$, then there is an canonical isomorphisms
 $\iota_v: \tilde{H}(F^+_{\bar{v}})  \cong U(n,n) (F_{\bar{v}})$ and $T(F^+_{\bar{v}}) \cong F_{\bar{v}}^{n}$. 
Therefore, The Weyl group $W(\tilde{H}_{\bar{v}},T_{\bar{v}}) = S_{n} \ltimes (\Z/2\Z)^n$ 
 and the Hecke algebra $\mathcal{H}(T(F^+_{\bar{v}}),T(\O_{F^+_{\bar{v}}}) ) \cong \Z[X_1^{\pm1},X_2^{\pm1},\cdots,X_{n}^{\pm1}]$.

\begin{definition}
Let $\bar{v}$ be a place of $F^+$ which is unramified in $F$.

1) Suppose that $\bar{v}$ splits in $F$. Put $\chi_{\tilde{G}} = \det^{2n-1}$, then we have 

$$ N': \mathcal{H} (\tilde{H}(F^{+}_{\bar{v}}),\tilde{H}(\O_{F^{+}_{\bar{v}}})) \otimes \Z[q_v^{\pm 1}] \cong \Z[q_v^{\pm 1}][Y_1^{\pm 1},Y_2^{\pm 1},...,Y_{2n}^{\pm 1}]^{S_{2n}} $$

For each $i = 1, . . . , 2n$, we write $\tilde{T}_{v,i}$ for the element of $\mathcal{H} (\tilde{H}(F^{+}_{\bar{v}}),\tilde{H}(\O_{F^{+}_{\bar{v}}})) \otimes \Z[q_v^{-1}]$ which corresponds
under the Satake transform to the element 
$q_v^{i(2n-i)/2} e_i(Y_1,\cdots,Y_{2n})$ ($e_i$ is $i$-th symmetric polynomial)

2) Suppose that $\bar{v}$ is inert in $F$. Since $q_{\bar{v}}=q_v^2$, the Satake isomorphism gives a canonical isomorphism

$$ N': \mathcal{H} (\tilde{H}(F^{+}_{\bar{v}}),\tilde{H}(\O_{F^{+}_{\bar{v}}})) \otimes \Z [q_{v}^{\pm 1}] \cong \Z[q_{v}^{\pm 1}][X_1^{\pm 1},X_2^{\pm 1},...,X_{n}^{\pm 1}]^{S_{n}\ltimes(\Z/2 \Z)^n}$$

 we write $\tilde{T}_{v,i}$ for the element of $\mathcal{H} (\tilde{H}(F^{+}_{\bar{v}}),\tilde{H}(\O_{F^{+}_{\bar{v}}})) \otimes \Z[q_v^{-1}]$ which corresponds
under the Satake transform to the element 
$q_v^{i(2n-i)/2} e_i(Y_1,\cdots,Y_{2n})$, where $\{Y_1,Y_2, \cdots ,\newline Y_{2n}
\} = \{ X_1^{\pm 1} , \cdots, X_n^{\pm 1} \}$.
\end{definition}

For each $\bar{v}$ of $F^+$ we have an algebra homomorphism $\mathcal{H} (\tilde{H}(F^{+}_{\bar{v}}),\tilde{H}(\O_{F^{+}_{\bar{v}}})) \to \mathcal{H} (H(F^{+}_{\bar{v}}),H(\O_{F^{+}
_{\bar{v}}}))$, we call it $\bS_{\bar{v}} = r_G \circ r_P $  unnormalized Satake transform, where algebra homomorphism $r_P: \mathcal{H} (\tilde{H}
(F^{+}_{\bar{v}}),\tilde{H}(\O_{F^{+}_{\bar{v}}})) \to \mathcal{H} (P(F^{+}
_{\bar{v}}),P(\O_{F^{+}_{\bar{v}}}))$ defines by $ f \mapsto f \vert_{P(F^+)}$ and algebra homomorphism $r_G : \mathcal{H} (P(F^{+}
_{\bar{v}}),P(\O_{F^{+}_{\bar{v}}})) \to \mathcal{H} (H(F^{+}
_{\bar{v}}), \newline H(\O_{F^{+}_{\bar{v}}}))$ defines by $ f \mapsto (x \mapsto \int_{U(F^+)} f(xu) du )$.
 
\begin{definition}
Let $\bar{v}$ be a place of $F^+$ which is unramified in $F$.

1)Suppose that $\bar{v}$ splits in $F$. The unnormalized Satake transform corresponds under the Satake
isomorphism to the map

$$\Z[q_v^{\pm 1}][Y_1^{\pm 1},Y_2^{\pm 1},...,Y_{2n}^{\pm 1}]^{S_{2n}}
\to \Z[q_v^{\pm 1}][W_1^{\pm 1},W_2^{\pm 1},...,W_{n}^{\pm 1},Z_1^{\pm 1},Z_2^{\pm 1},...,Z_{n}^{\pm 1}]^{S_{n}\times S_n}$$

where the set $\{ Y_1, \cdots, Y_{2n} \}$ is in bijection with $\{ q_v^{n/2}Z_1^{-1},\cdots , q_v^{n/2}Z_n^{-1} , q_v^{-n/2}W_1, \cdots \newline , q_v^{-n/2}W_n \}$. For all $i =1,\cdots,n$, let $T_{v,i} \in \mathcal{H} (H(F_v), H(\O_{F_v}))$ corresponds to $q_v^{i(n-i)/2} e_i(W_1,\cdots,W_{n})$.

2). Suppose instead that $\bar{v}$ is inert in $F$. Then the unnormalized Satake transform corresponds under
the Satake isomorphism to the map

$$\Z[q_v^{\pm 1}][X_1^{\pm 1},X_2^{\pm 1},...,X_{n}^{\pm 1}]^{S_{n}\ltimes(\Z/2 \Z)^n} \to \Z[q_v^{\pm 1}][W_1^{\pm 1},W_2^{\pm 1},...,W_{n}^{\pm 1}]^{S_{n}}$$

where the set $\{ X_1, \cdots, X_{n} \}$ is in bijection with $\{ q_v^{-n/2}W_1, \cdots, q_v^{-n/2}W_n \}$. For all $i =1,\cdots,n$, let $T_{v,i} \in \mathcal{H} (H(F_v), H(\O_{F_v}))$ corresponds to $q_v^{i(n-i)/2} e_i(W_1,\cdots,W_{n})$.

\end{definition}

Let $\tilde{\T}^S := \otimes_{\bar{v} \notin \bar{S}}' (\mathcal{H} (\tilde{H}(F^{+}_{\bar{v}}),\tilde{H}(\O_{F^{+}_{\bar{v}}})) \otimes \O)$, resp.$\T^S  := \otimes_{v \notin S}' (\mathcal{H} (H(F_{v}),\tilde{H}(\O_{F_{v}})) \otimes \O)$, be base change to $\O$ the abstract spherical Hecke algebra outside $S$ for $\tilde{H}$, resp. for $H$.
Let $\bS = \otimes_{v\notin S}\bS_v\colon \tilde{\T}^S\to \T^S$ be the twisted Satake homomorphism.

 Let $D(\O)$ be the derived category of complexes of $\O$-modules.
 By section 2.1.2 of \cite{ACC}, the complex $R\Gamma (\tilde{X}_{\tilde{K}}, \V_{\tilde{\lambda}})$, resp. $R\Gamma (X_K , \V_\lambda))$ has $\tilde{T}^S$-module structure, resp. $T^S$-module structure. We denote by  $\tilde{\T}^S (\tilde{K},\tilde{\lambda})$, 
resp. $\T^S (K,\lambda)$, the image of $\tilde{\T}^S$, resp.$\T^S$,
 in $
\End_{D(\O)} (R\Gamma (X_K , \V_\lambda))$, resp. in $
\End_{D(\O)} (R\Gamma (\tilde{X}_{\tilde{K}}, \V_{\tilde{\lambda}}))$.

\section{Shimura varieties}

Let  $V= \O_{F}^{2n}$ be equipped with the skew-hermitian form
$$\langle(x_1,\cdots,x_{2n}),(y_1,\cdots,y_{2n})\rangle
=\sum_{i=1}^{n} (x_i \bar{y}_{2n+1-i} -x_{2n+1-i}y_i)$$
consider the associated alternating form
$$(\cdot,\cdot) : V \times V  \to \Q : (x,y):= \text{tr}_{\O_F/\Z}\langle x,y\rangle$$

We define $\widetilde{G}$, resp. $ \underline{\widetilde{G}}$, as the group scheme representing the functor sending any ring $R$ to
$$\tilde{G}(R) = \{ g \in \GL_{O_F}(V)(R) \vert (gv,gw)=(v,w) , \forall v,w \in V \}$$
resp.
$$\underline{\tilde{G}}(R) = \{ (g,c) \in \GL_{O_F}(V)(R) \times R^\times\vert (gv,gw)=c (v,w) , \forall v,w \in V \}$$

Since all places above $p$ in $F^+$ split in $F$, there is a direct factor submodule $L\subset V \otimes \Z_p$ of rank $n$ 
such that $V \otimes \Z_p \cong L \oplus L^\lor(1)$ (this isomorphism is compatible with the alternating form).

Let us denote by $(\cdot,\cdot)_{can}:  (L \oplus L^\lor(1)) \times  (L \oplus L^\lor(1)) \to \Z_p$ 
the alternating form $((x_1,f_1),(x_2,f_2))_{can} =f_2 (x_1)-f_1(x_2)$.
\subsection{Integral model of the Shimura variety}

\begin{definition}
Let $S$ be a scheme over $\Z[\frac{1}{\Delta_F}
]$. An abelian variety with $\underline{\tilde{G}}$-structure over
$T$ is a triple $(A, \iota, \lambda)_T$ where $A$ is an abelian scheme of dimension $[F : \Q]n$
over $T$, $\iota : \O_F \to \End(A)$ is an $\O_F$-action such that $\text{Lie} A$ is free of rank $n$ over $\O_F \otimes_\Z \O_T$, and 
$\lambda : A \to A^\vee$ is a principal polarization on A whose
Rosati involution is compatible with complex conjugation on $\O_F$ via $\iota$.
\end{definition}
We fix a neat compact open subgroup $\underline{\tilde{K}}$ of $\underline{\tilde{G}_f}$.
The functor 
$$T\mapsto\{(A, \iota, \lambda)_T \mbox{\rm abelian variety with}\, \underline{\tilde{G}}-\mbox{\rm structure and level}\, \underline{\tilde{K}}\}$$
%of abelian varieties with $\tilde{G}$-structure with level structure $\underline\tilde{K}}$
 is representable by a scheme $\mathfrak{X}_{\tilde{K}}$ which is smooth over $\Z[\frac{1}{ \underline{S}}]$ where $\underline{S}$ is 
the set of rational primes $\ell \neq p$ such that there is a place in $S$ above of $\ell$.

%Let $\mathfrak{X}_{\tilde{K}}$ be the normalization of $\mathfrak{X}_{\tilde{K}}^{pre}$.  

\begin{theorem}
There is a natural isomorphism of manifolds
$$\tilde{X}_{\underline{\tilde{K}}} \cong \mathfrak{X}_{\tilde{K}}(\C)$$
and also the map $\tilde{G} \hookrightarrow \underline{\tilde{G}}$ induces a natural map 
$\tilde{X}_{\tilde{K}} \hookrightarrow \tilde{X}_{\underline{\tilde{K}}}$, which is an open and closed immersion.
\end{theorem}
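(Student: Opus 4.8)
The plan is to treat the two assertions separately: first the analytic uniformization of the PEL moduli space $\mathfrak{X}_{\tilde{K}}$, then the comparison between $\tilde{X}_{\tilde{K}}$ and $\tilde{X}_{\underline{\tilde{K}}}$. Throughout I write $\underline{\tilde{X}} := \underline{\tilde{G}}(\R)/\underline{\tilde{K}}_\infty$ for the symmetric space, a disjoint union of copies of the Hermitian symmetric domain attached to the product over the real places of $F^+$ of copies of $U(n,n)$, so that $\tilde{X}_{\underline{\tilde{K}}} = \underline{\tilde{G}}(\Q)\backslash(\underline{\tilde{X}}\times\underline{\tilde{G}}_f/\underline{\tilde{K}})$.

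For the first isomorphism I would run the classical dictionary between polarized abelian varieties with $\O_F$-multiplication and level structure on the one hand, and polarized $\O_F$-Hodge structures on $V$ on the other, exactly as in \cite[Section 2]{ACC}. Given $(A,\iota,\lambda)$ with $\underline{\tilde{K}}$-level structure over $\C$, I pass to $H := H_1(A,\Q)$: via $\iota$ this is an $F$-vector space of dimension $2n$; the polarization endows it with a nondegenerate $\Q$-alternating form whose Rosati involution induces complex conjugation on $\O_F$; the complex structure of $A$ gives a weight $-1$ Hodge structure with $H^{0,-1}\cong\mathrm{Lie}(A)$; and the level structure gives a $\underline{\tilde{K}}$-orbit of $\O_F$-linear symplectic similitudes from the finite adelic homology of $A$ to $V\otimes\A_f$. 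The hypothesis that $\mathrm{Lie}(A)$ is free of rank $n$ over $\O_F\otimes_\Z\mathcal{O}_T$ forces the signature of the form to be $(n,n)$ at every archimedean place, i.e. the Hodge cocharacter lies in the $\underline{\tilde{G}}(\R)$-conjugacy class cut out by $\underline{\tilde{X}}$; together with a Hasse principle for skew-Hermitian forms on $F^{2n}$ (with the signature now fixed) this shows $(H,\langle\cdot,\cdot\rangle)\cong(V\otimes\Q,(\cdot,\cdot))$ over $\Q$. A choice of such an isometry produces a point of $\underline{\tilde{X}}\times\underline{\tilde{G}}_f/\underline{\tilde{K}}$, well defined modulo $\underline{\tilde{G}}(\Q)$. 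Conversely, from $(h,g)$ one forms the complex torus $(V\otimes\R,h)/\Lambda_g$, with $\Lambda_g\subset V\otimes\Q$ the lattice determined by $g(V\otimes\widehat{\Z})$, with Riemann form from $(\cdot,\cdot)$, the tautological $\O_F$-action, and the evident level structure; the freeness condition holds because of the signature. These constructions are mutually inverse, and they are holomorphic with holomorphic inverse because the complex structure on $\mathfrak{X}_{\tilde{K}}(\C)$ is recovered from the Hodge filtration on the relative $H^1$ of the universal abelian scheme. This yields the natural isomorphism of complex manifolds $\mathfrak{X}_{\tilde{K}}(\C)\cong\tilde{X}_{\underline{\tilde{K}}}$.

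For the second assertion, $\tilde{G}\hookrightarrow\underline{\tilde{G}}$ is the closed subgroup scheme $\{c=1\}$, and since $\tilde{K}=\tilde{G}_f\cap\underline{\tilde{K}}$ and $\tilde{K}_\infty=\tilde{G}_\infty\cap\underline{\tilde{K}}_\infty$ there is a well-defined map $\tilde{X}_{\tilde{K}}\to\tilde{X}_{\underline{\tilde{K}}}$. It is injective: if $[h_1,g_1]=[h_2,g_2]$ in $\tilde{X}_{\underline{\tilde{K}}}$ with $g_1,g_2\in\tilde{G}_f$, pick $\gamma\in\underline{\tilde{G}}(\Q)$, $k\in\underline{\tilde{K}}\,\underline{\tilde{K}}_\infty$ with $\gamma g_1 k=g_2$ and $\gamma h_1=h_2$; applying the similitude character $\nu\colon\underline{\tilde{G}}\to\mathbb{G}_{m}$ and using $\nu(g_i)=1$ together with $\nu(\underline{\tilde{K}}\,\underline{\tilde{K}}_\infty)\subset\widehat{\O}_{F^+}^{\times}\cdot\R_{>0}$, one finds $\nu(\gamma)$ is a totally positive unit of $F^+$, which (absorbing a square into $\gamma$, using neatness) may be taken to be $1$; then $\gamma\in\tilde{G}(\Q)$, $k\in\tilde{K}\,\tilde{K}_\infty$, and the two points already agree in $\tilde{X}_{\tilde{K}}$. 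The map is a local diffeomorphism onto its image because $\tilde{G}^{\mathrm{der}}=\underline{\tilde{G}}^{\mathrm{der}}$ and the extra central directions of $\underline{\tilde{G}}$ act trivially on $\underline{\tilde{X}}$, so over each connected component both sides are presented as $\Gamma\backslash\mathcal{D}$ for the same arithmetic group $\Gamma$ acting on the same Hermitian domain $\mathcal{D}$. Finally, $\pi_0(\tilde{X}_{\underline{\tilde{K}}})$ is computed through $\nu$ as a quotient of a ray class group of $F^+$, and the image of $\tilde{X}_{\tilde{K}}$ is exactly the fibre over the trivial class; it is therefore a union of connected components of $\tilde{X}_{\underline{\tilde{K}}}$, hence both open and closed, so the map is an open and closed immersion.

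The routine input is the homological dictionary. The two points needing genuine care are (i) that the condition ``$\mathrm{Lie}(A)$ free of rank $n$ over $\O_F\otimes_\Z\mathcal{O}_T$'' matches precisely the signature $(n,n)$ condition defining $\underline{\tilde{X}}$, and that the relevant skew-Hermitian forms obey a Hasse principle — this is exactly where the hypothesis that the places of $F^+$ above $p$ split in $F$ (providing $V\otimes\Z_p\cong L\oplus L^\vee(1)$) and the neatness of $\underline{\tilde{K}}$ enter — and (ii) the $\pi_0$/approximation bookkeeping showing the image is a full union of components rather than merely a closed subvariety. I expect (i) to be the main obstacle; alternatively one simply transcribes the corresponding statement from \cite[Section 2]{ACC}, to which the present set-up has been tailored.
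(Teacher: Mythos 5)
The paper does not actually prove this statement: its ``proof'' is the single line ``See [CS, Lemma 2.1.1 and Proposition 2.1.6]'', and your sketch is in substance a reconstruction of what those cited results do --- the Riemann-form/Hodge-structure dictionary identifying $\mathfrak{X}_{\tilde K}(\C)$ with the adelic double coset space for $\underline{\tilde G}$, and the comparison of $\tilde X_{\tilde K}$ with $\tilde X_{\underline{\tilde K}}$ via the similitude character and a $\pi_0$ computation. So your route is the standard one and is compatible with the paper; the difference is only that you supply an argument where the paper defers entirely to Caraiani--Scholze.

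Two points in your sketch are misplaced or heavier than necessary. First, the genuine content of the first isomorphism is the local-global step: a priori the PEL moduli problem could be a disjoint union of $|\ker^1(\Q,\underline{\tilde G})|$ copies of the Shimura variety, and showing it is a single copy is exactly what the cited CS statements address. What makes it work is Landherr's Hasse principle for (skew-)Hermitian spaces over the CM field $F$ together with the fact that the similitude factor of $\underline{\tilde G}$ (as defined in Section 3, with the $\Q$-valued trace form) is the rational $\mathbb{G}_m$; it has nothing to do with the hypothesis that the places above $p$ split in $F$ or with neatness of $\underline{\tilde K}$, which you invoke there. The split-at-$p$ hypothesis only provides the self-dual decomposition $V\otimes\Z_p\cong L\oplus L^\vee(1)$ used for the integral model at hyperspecial level, and neatness only gives representability by a (smooth) scheme rather than a stack. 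Second, in the injectivity step your detour through totally positive units and ``absorbing a square using neatness'' is an artifact of pretending the similitude character is $F^+$-valued; with the operative definition the similitude of $\gamma$ is a positive rational lying in $\widehat{\Z}^\times\cdot\R_{>0}$, hence equal to $1$ outright, and then $\gamma\in\tilde G(\Q)$, $k\in\tilde K\tilde K_\infty$ by the compatibility $\tilde G_f\cap\underline{\tilde K}=\tilde K$. With those two corrections your outline matches the cited proof; the $\pi_0$ argument identifying the image with the fibre over the trivial class of the similitude class group, hence with a union of connected components, is exactly how the open-and-closed statement is obtained there.
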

\begin{proof}
See \cite[Lemma 2.1.1 and Proposition 2.1.6]{CS}.
\end{proof}

\subsection{Automorphic vector bundle}
In the sequel, we write $\mathfrak{X}_{\tilde{K}}$ for the base change of $\mathfrak{X}_{\tilde{K}}$ to $\Z_p$.
 
Let $A \to \mathfrak{X}_{\tilde{K}}$ be the universal abelian variety for $\mathfrak{X}_{\tilde{K}}$ and let 
$\underline{H}_{dR}^1 (A/\mathfrak{X}_{\tilde{K}})$ be the relative de Rham cohomology and 
$\underline{H}^{dR}_1 (A/\mathfrak{X}_{\tilde{K}}):= \underline{\Hom}_{\O_{\mathfrak{X}_{\tilde{K}}}}(\underline{H}_{dR}^1 (A/\mathfrak{X}_{\tilde{K}}),\O_{\mathfrak{X}_{\tilde{K}}})$

We have the canonical pairing $\langle \cdot , \cdot \rangle_{\lambda} : 
\underline{H}^{dR}_{1}(A/\mathfrak{X}_{\tilde{K}})\times \underline{H}^{dR}_{1}
 (A/\mathfrak{X}_{\tilde{K}}) \rightarrow \mathscr{O}_{\mathfrak{X}_{\tilde{K}}}(1)$
defined as the composite of $(Id \times
\lambda)_{*}$ followed by the perfect pairing
$\underline{H}^{dR}_{1}(A/\X_{\tilde{K}})\times \underline{H}^{dR}_{1} (A^{\lor}/\mathfrak{X}_{\tilde{K}}) \rightarrow \mathscr{O}_{\mathfrak{X}_{\tilde{K}}}(1)$
defined by $c_1 (\mathcal{L}) \in H_{dR}^2 (A \times_{\X_{\tilde{K}}} A^\lor /\X_{\tilde{K}})$ the first Chern
class of the Poincar\'e line bundle $\mathcal{L}$ over $A\times_{\mathfrak{X}_{\tilde{K}}}A^{\lor}$.

$$  \underline{H}^{dR}_{1}(A/\mathfrak{X}_{\tilde{K}})\times \underline{H}^{dR}_{1}
 (A/\mathfrak{X}_{\tilde{K}}) \xrightarrow{(Id \times
\lambda)_{\star}} \underline{H}^{dR}_{1}(A/\X_{\tilde{K}})\times 
\underline{H}^{dR}_{1} (A^{\lor}/\mathfrak{X}_{\tilde{K}}) 
$$
$$\xrightarrow{\cup} \underline{H}^{dR}_{2}(A/\X_{\tilde{K}}) 
\xrightarrow{\cdot c_1 (\mathcal{L})} \mathscr{O}_{\mathfrak{X}_{\tilde{K}}}(1)$$

Let $\X_{\tilde{K}}^{(1)}$ be the first infinitesimal neighborhood of the diagonal image of $\X_{\tilde{K}}$
in $\X_{\tilde{K}} \times_{\Z_p} \X_{\tilde{K}}$, and let $pr_1; pr_2 : \X_{\tilde{K}}^{(1)} \to \X_{\tilde{K}}$ be the two projections. Then we
have by definition the canonical morphism $\O_{\X_{\tilde{K}}} \to \mathscr{P}^1_{\X_{\tilde{K}}/\Z_p} := pr_{1,\star} pr_2^{\star}(\O_{\X_{\tilde{K}}})$,
where $\mathscr{P}^1_{\X_{\tilde{K}}/\Z_p}$ is the sheaf of principal parts of order $1$. The isomorphism $s: \X_{\tilde{K}}^{(1)} \to \X_{\tilde{K}}^{(1)}$ over $\X_{\tilde{K}}$ swapping the two components of the fiber product then
defines an automorphism $s^\star$ of $\mathscr{P}^1_{\X_{\tilde{K}}/\Z_p}$. The kernel of the structural morphism
$str^\star :  \mathscr{P}^1_{\X_{\tilde{K}}/\Z_p} \to \O_{\X_{\tilde{K}}}$, canonically isomorphic to $\Omega^1_{\X_{\tilde{K}}/\Z_p}$ by definition, is
spanned by the image of $s^\star - Id^\star$.

An important property of the relative de Rham cohomology of any smooth morphism like $A \to \X_{\tilde{K}}$ is that, for any two smooth lifts $\tilde{A}_1 \to \X_{\tilde{K}}^{(1)}$ and $\tilde{A}_2 \to  \X_{\tilde{K}}^{(1)}$
of $A \to \X_{\tilde{K}}$, there is a canonical isomorphism $\underline{H}^1_{dR}(\tilde{A}_1/\X_{\tilde{K}}^{(1)})\cong \underline{H}^1_{dR}(\tilde{A}_2/\X_{\tilde{K}}^{(1)})$ 
lifting the identity morphism on $\underline{H}_1^{dR}(A/\X_{\tilde{K}})$.
If we consider $\tilde{A}_1 = pr_1^\star A$ and 
$\tilde{A}_2 = pr_2^\star A$, then we obtain a canonical isomorphism
$$pr_1^\star \underline{H}^1_{dR}(A/\X_{\tilde{K}}) \cong \underline{H}^1_{dR}(pr_1^\star A /\X_{\tilde{K}}^{(1)})
 \cong \underline{H}^1_{dR}(pr_2^\star A/\X_{\tilde{K}}^{(1)}) 
 \cong pr_2^\star \underline{H}^1_{dR}(A/\X_{\tilde{K}})$$
which we denote by $Id^\star$ by abuse of notation. On the other hand, the pullback by the swapping automorphism $s: \X_{\tilde{K}}^{(1)} \to \X_{\tilde{K}}^{(1)}$ defines another canonical isomorphism 

$$s^\star: pr_1^\star \underline{H}^1_{dR}(A/\X_{\tilde{K}}) \cong \underline{H}^1_{dR}(pr_1^\star A /\X_{\tilde{K}}^{(1)})
 \xrightarrow{\sim} \underline{H}^1_{dR}(pr_2^\star A/\X_{\tilde{K}}^{(1)}) 
 \cong pr_2^\star \underline{H}^1_{dR}(A/\X_{\tilde{K}})$$

The Gauss-Manin connection  $\nabla : \underline{H}^{1}_{dR}(A/\mathfrak{X}_{\tilde{K}}) 
\rightarrow \underline{H}^{1}_{dR}(A/\mathfrak{X}_{\tilde{K}}) \otimes_{\mathscr{O}_{\mathfrak{X}_{\tilde{K}}}} 
\Omega^{1}_{\mathfrak{X}_{\tilde{K}}/\Z_p}$ on $\underline{H}^{1}_{dR}(A/\mathfrak{X}_{\tilde{K}})$ is the composition

$$\underline{H}^{1}_{dR}(A/\mathfrak{X}_{\tilde{K}}) \xrightarrow{pr_1^\star} \underline{H}^{1}_{dR}(pr_1^\star A/\mathfrak{X}_{\tilde{K}}^{(1)}) \xrightarrow{s^\star - Id^\star} \underline{H}^{1}_{dR}(A/\mathfrak{X}_{\tilde{K}}) \otimes_{\X_{\tilde{K}}} \Omega^1_{\X_{\tilde{K}}/\Z_p}$$

\begin{remark}
This Gauss-Manin connection is same with the usual Gauss-Manin connection on the relative
de Rham cohomology  $\underline{H}^{1}_{dR}(A/\mathfrak{X}_{\tilde{K}})$ (cf. \cite{KO}).
\end{remark}

\begin{definition}
The principal $\tilde{G}$-bundle over $\mathfrak{X}_{\tilde{K}}$ is the $\tilde{G}$-torsor

\begin{equation}
 \mathscr{E}_{\tilde{G}} := \underline{\text{Isom}}_{\mathcal{O}_F\otimes 
\mathscr{O}_{\mathfrak{X}_{\tilde{K}}}}[(\underline{H}_{1}^{dR}(A/\mathfrak{X}_{\tilde{K}} (1)), 
\langle\cdot,\cdot\rangle_\lambda,\O_{\mathfrak{X}_{\tilde{K}}}),
((L \oplus L^{\lor} (1))\otimes_{\Z_p} \O_{\mathfrak{X}_{\tilde{K}}} , 
\langle\cdot,\cdot\rangle_\text{can}, \O_{\mathfrak{X}_{\tilde{K}}}(1))]
\end{equation}

the sheaf of isomorphisms of $\O_{\mathfrak{X}_{\tilde{K}}}$-sheaves of symplectic 
$\O_F$-modules.
\end{definition}

We can define $\mathscr{E}_{P}$ (is a $P$-torsor) which is all isomorphism such 
above by adding condition $\underline{\text{Lie}}^{\lor}_{A^{\lor}/\mathfrak{X}_{\tilde{K}}}$ maps to 
$L^{\lor}(1)\otimes_{\Z_p} \O_{\mathfrak{X}_{\tilde{K}}}$ and $\mathscr{E}_{G}$ (is a 
$G$-torsor) is all $\O_F \otimes_{\mathbb{Z}} 
\O_{\mathfrak{X}_{\tilde{K}}}$-isomorphism between 
$\underline{\text{Lie}}^{\lor}_{A^{\lor}/\mathfrak{X}_{\tilde{K}}}$ and $L^{\lor}(1)\otimes_{\Z_p} 
\O_{\mathfrak{X}_{\tilde{K}}}$.

\begin{definition}
Let $R$ be any $\Z_p$-algebra and $W\in \text{Rep}_R (\tilde{G}_p)$, we define

$$ \mathscr{E}_{\tilde{G}} (W) := (\mathscr{E}_{\tilde{G}} \otimes_{\Z_p} R) \times_{\tilde{G}_p \otimes_{\Z_p} R} W$$

Moreover, for any $W\in \text{Rep}_R (G_p)$, we define

$$ \mathscr{E}_{G} (W) := (\mathscr{E}_{G} \otimes_{\Z_p} R) \times_{G_p \otimes_{\Z_p} R} W$$

which are coherent sheaf on $\mathfrak{X}_{\tilde{K}} \otimes_{\Z_p} R$.
\end{definition}

If $W\in \text{Rep}_R (G_p)$ be free $R$-module, we say $ \mathscr{E}_{G} (W)$ is an automorphic vector bundle.

\begin{remark}
By \cite[Proposition 3.7]{LSc}, for any $\tilde{\lambda} \in (\Z_+^{2n})^\Hom(F^+,E)$, there is an idempotent element $\epsilon_{\tilde{\lambda}}$ in $\Z_p [(V \times \Z_p)^m\rtimes S_m]$ and an integer $t_{\tilde{\lambda}}$ such that

$$ \mathscr{E}_{\tilde{G}} (\V_{\tilde{\lambda}}) \cong (\epsilon_{\tilde{\lambda}})_\star \underline{H}_m^{dR} (A^m/\X_{\tilde{K}})(t_{\tilde{\lambda}})$$
where $m = |\tilde{\lambda}|$.
\end{remark}

\subsection{Toroidal compactifications of the Shimura variety}

We follow the definitions and notations of K.W. Lan's thesis \cite{lan}.

 To the Shimura variety $\mathfrak{X}_{\tilde{K}} $, one can attach a family of toroidal compactifications. Each is attached to a collection $\Sigma$ of combinatorial data adapted
to $\tilde{K}$. The corresponding
toroidal compactification is denoted $\mathfrak{X}_{\tilde{K}}^{tor}$ or $\mathfrak{X}_{\tilde{K},\Sigma}$.

Let $\cP$ be the set of maximal $\Q$-rational parabolic subgroups of $\widetilde{G}$. Let $P=MU\in \cP$.

The combinatorial data $\Sigma = \cup \Sigma_R$ adapted to the neat level $\tilde{K}$ is a collection of fans, $\Sigma_R$ , one for each rational boundary component $R$. Each
$R$ corresponds to its stabilizer $P_R$ , which i. The fan $\Sigma_R$ gives a polyhedral cone decomposition of a partial
compactification $\bar{C}_R$ of a certain cone $C_R$ inside $U_R (\R)$ that is open, convex, and self-adjoint with respect to a $\Q$-rational positive definite quadratic
form, where $U_R$ is the center of the unipotent radical of $P_R$.

 By choosing suitable $\Sigma$, these compactifications are smooth. Let $D$ be the boundary of $\mathfrak{X}_{\tilde{K}}$ in $\mathfrak{X}_{\tilde{K}}^{tor}$. It is 
a relative Cartier divisor with normal crossings over $\Z[\underline{S}^{-1}]$.

 The universal abelian scheme $A \to \X_{\tilde{K}}$ extends to a semi-abelian scheme
$A^{ext} \to \X_{\tilde{K}}^{tor}$ , the polarization $\lambda : A \to A^\lor$ extends to a prime-to-$p$
isogeny $\lambda^{ext} : A^{ext} \to (A^{ext})^\lor$
between semi-abelian schemes where $(A^{ext})^\lor$ is the dual of the semi-abelian scheme $A^{ext}$.

By proposition of 1.4.3 of \cite{lan2}, we can define a locally free $\O_{\X^{tor}_{\tilde{K}}}$-sheaf $
\underline{H}_1^{dR}(A/\X_{\tilde{K}})^{can}$ extending the $\O_{\X_{\tilde{K}}}$-sheaf $\underline{H}_1^{dR}(A/\X_{\tilde{K}})$ and
 such that it
satisfies conditions including the existence of an alternating form 
$\langle\cdot,\cdot\rangle_\lambda$ extending the Poincar\'e pairing, and endowed with a connection with logarithmic singularities extending the Gauss-Manin 
connection $\nabla$ on $\underline{H}_1^{dR}(A/\X_{\tilde{K}})$. 
Therefore we can define $\mathscr{E}_{\tilde{G}}^{can}$ and $\mathscr{E}_{G}^{can}$ as last subsection.

  For any $\Z_p$-algebra $R$ and $W \in \text{Rep}_{R}(G_p)$, there are two extension for $\mathscr{E}_G (W)$ to $\mathfrak{X}_{\tilde{K}}$ 
which is compatible with changing level and refining of by cone decomposition $\Sigma$.
The first one is the canonical extension of $\mathscr{E}_G (W)$:
$$\mathscr{E}_G (W)^{can}= (\mathscr{E}_{G}^{can} \otimes_{\Z_p} R) \times_{G_p \otimes_{\Z_p} R} W$$
and the second is the sub-canonical extension of $\mathscr{E}_G$:
$$\mathscr{E}_G 
(W)^{sub} = \mathscr{E}_G (W)^{can} \otimes_{\O_{\mathfrak{X}_{\tilde{K}}}} I_D$$
where $I_D$ is the $\O_{\X_{\tilde{K}}}$-ideal defining the relative Cartier divisor $D$.

\begin{remark}
We have an isomorphism 
$$\mathscr{E}_{\tilde{G}} (\V_{\tilde{\lambda}})^{can(sub)}
 \cong (\epsilon_{\tilde{\lambda}})_\star \underline{H}_m^{dR} (A^m/\X_{\tilde{K}})^{can(sub)}(t_{\tilde{\lambda}})$$
where $m = |\tilde{\lambda}|$. 
\end{remark}

\subsection{Local system on the integral model}
In this subsection We will define \'etale local system on $\X_{\tilde{K}} \times_{\Z[\frac{1}{\Delta_F \underline{S}}]} \bar{\Q}$ 
and we will compute Fontaine-Laffaile weight of $ H_{(c)}^i(\tilde{X}_{\tilde{K}},\V_{\tilde{\lambda}}/\varpi)$.

For any positive integer $m$, let $f_m: A_{\bar{\Q}}^m \to \X_{\tilde{K}} \times_
{\Z[\frac{1}{\underline{S}}]} \bar{\Q}$ be 
base change to $\bar{\Q}$ of the $m$-th fiber product $A^m\to \X_{\tilde{K}}$ of the universal abelian scheme.

If we fix $m= |\tilde{\lambda}|$,

 $$\V_{\tilde{\lambda},\text{\'et}} := 	(\epsilon_{\tilde{\lambda}})_\star R^m(f_m)_\star (\O)(-t_{\tilde{\lambda}})$$
 
\begin{proposition}
For any $\tilde{\lambda} \in (\Z^{2n}_+)^{\Hom(F^+,E)}$ 
there exist an \'etale local system in  $\O$-modules $
\V_{\tilde{\lambda},\text{\'et}}$ on
 $\X_{\tilde{K}} \times_
{\Z[\frac{1}{\underline{S}}]} \bar{\Q}$ such that 
it extend to $\X_{\tilde{K}} / \Z[\frac{1}{ \underline{S}}]$ and:
$$H_{\text{\'et},(c)}^i (\X_{\tilde{K}} \times_
{\Z[\frac{1}{ \underline{S}}]} \bar{\Q} , \V_{\tilde{\lambda},\text{\'et}} /\varpi^m ) \cong H_{B,(c)}^i(\tilde{X}_{\tilde{K}},\V_{\tilde{\lambda}}/\varpi^m )$$
for all positive integer $m$.
\end{proposition}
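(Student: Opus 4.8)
The plan is to compare the \'etale realization of $\V_{\tilde\lambda}$ with its Betti counterpart by reducing everything to the universal abelian scheme, where the comparison between \'etale and singular cohomology with $\Z/\varpi^m$-coefficients is Artin's comparison isomorphism, and then to propagate the idempotent $\epsilon_{\tilde\lambda}$ and the Tate twist $t_{\tilde\lambda}$ through it. First I would fix $m=|\tilde\lambda|$ and recall from the remark following the construction of $\mathscr{E}_{\tilde G}(\V_{\tilde\lambda})$ (and the analogous fact over $\bar\Q$) that the Betti local system $\V_{\tilde\lambda}$ on $\tilde X_{\tilde K}$ is cut out by the same idempotent $\epsilon_{\tilde\lambda}\in\Z_p[(V\times\Z_p)^m\rtimes S_m]$ acting on $R^m (f_m^{\mathrm{an}})_\star\O$, the local system of $m$-th Betti cohomology groups of the fibers of $A^m\to\tilde X_{\tilde K}$, twisted by $t_{\tilde\lambda}$; this is precisely the statement that $\V_{\tilde\lambda}$, as a representation of $\tilde G_p$, is a direct summand of $(\mathrm{std})^{\otimes m}$ after a twist, which is how $\epsilon_{\tilde\lambda}$ is produced in \cite{LSc}.

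Next I would invoke the proper base change theorem and the Artin comparison isomorphism for the smooth proper morphism $f_m\colon A^m_{\bar\Q}\to \X_{\tilde K}\times\bar\Q$: for every positive integer $m'$ there is a canonical isomorphism of \'etale sheaves $R^m(f_m)_\star(\Z/\varpi^{m'})\cong R^m(f_m^{\mathrm{an}})_\star(\Z/\varpi^{m'})$ compatible with the action of the correspondences generating $(V\times\Z_p)^m\rtimes S_m$ (these are algebraic self-correspondences of $A^m$ over $\X_{\tilde K}$, hence act compatibly on both sides). Applying the idempotent $\epsilon_{\tilde\lambda}$ and the Tate twist by $-t_{\tilde\lambda}$, which on the Betti side is the trivial twist, yields an isomorphism $\V_{\tilde\lambda,\text{\'et}}/\varpi^{m'}\cong \V_{\tilde\lambda}/\varpi^{m'}$ as sheaves on $\tilde X_{\tilde K}$ (under the identification $\tilde X_{\tilde K}\cong\X_{\tilde K}(\C)$ of Theorem~3.4, or rather its $\underline{\tilde K}$-version restricted to the open-closed piece). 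Passing to (compactly supported) cohomology and applying once more the Artin comparison for the base $\X_{\tilde K}$ itself — valid because $\X_{\tilde K}\times\bar\Q$ is a smooth finite-type $\bar\Q$-scheme, so its \'etale cohomology with finite coefficients agrees with the singular cohomology of $\X_{\tilde K}(\C)=\tilde X_{\underline{\tilde K}}$ — gives the desired isomorphism $H^i_{\text{\'et},(c)}(\X_{\tilde K}\times\bar\Q,\V_{\tilde\lambda,\text{\'et}}/\varpi^{m'})\cong H^i_{B,(c)}(\tilde X_{\tilde K},\V_{\tilde\lambda}/\varpi^{m'})$, compatibly in $m'$ (and with the Hecke action, since the Hecke correspondences are defined algebraically). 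For the extension statement I would note that the whole construction $\epsilon_{\tilde\lambda}R^m(f_m)_\star\O(-t_{\tilde\lambda})$ makes sense already over $\Z[\tfrac1{\underline S}]$ using the smooth proper $A^m\to\X_{\tilde K}$ over that base, so $\V_{\tilde\lambda,\text{\'et}}$ descends there.

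The main obstacle is bookkeeping rather than conceptual: one must check that the idempotent $\epsilon_{\tilde\lambda}$ — a priori an element of a group algebra of a semidirect product involving the lattice $V\times\Z_p$, i.e. acting through Pontryagin-type translations on $A^m$ — genuinely acts by algebraic correspondences over $\X_{\tilde K}$ (using that the $\Z_p$-translation part is encoded in the polarization / the $L\oplus L^\vee(1)$ decomposition, and the twist $t_{\tilde\lambda}$ absorbs the relevant Tate twists), so that its action commutes with the comparison isomorphism; this is where one leans on \cite[Proposition 3.7]{LSc} and the integrality hypothesis ``$p$ large with respect to $\lambda$'' ensuring $\epsilon_{\tilde\lambda}$ has coefficients in $\Z_{(p)}$ (so it acts on $\varpi^{m'}$-torsion coefficients). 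A secondary point is to make the comparison respect \emph{compact-support} cohomology as well as ordinary cohomology: here I would use the functoriality of the Artin comparison isomorphism for the open immersion $\X_{\tilde K}\hookrightarrow\X_{\tilde K}^{tor}$ together with the extension of $\V_{\tilde\lambda,\text{\'et}}$ to a local system (lisse sheaf) on all of $\X_{\tilde K}$, so that $j_!$ commutes with analytification; the toroidal compactification and the log-extension of the de Rham / \'etale data recalled in Section~3.3 provide exactly this. Once these compatibilities are in place the proposition follows formally.
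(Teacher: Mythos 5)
The paper's own ``proof'' of this proposition is a one-line citation to \cite[Subsection 4.3]{LSc}, and your sketch reconstructs essentially the argument of that reference: realize $\V_{\tilde{\lambda}}$ as the summand cut out by $\epsilon_{\tilde{\lambda}}$ (with the twist $t_{\tilde{\lambda}}$) in $R^m(f_m)_\star$ of the Kuga family, use that these correspondences are algebraic over the integral base so the construction already lives over $\Z[\tfrac{1}{\underline{S}}]$, and conclude by Artin's comparison isomorphism for finite coefficients (for the fibers, for the base, and for the $j_!$/compact-support variant), compatibly with the Hecke action. So your proposal is correct and takes essentially the same route as the paper, the only point to keep straight being the one you already flag: $\X_{\tilde{K}}(\C)\cong\tilde{X}_{\underline{\tilde{K}}}$ and one restricts to the open-and-closed piece $\tilde{X}_{\tilde{K}}$.
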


\begin{proof}
See \cite[Subsection 4.3]{LSc}.
\end{proof}

Since $\V_{\tilde{\lambda}}$ can be defined over $F_0$,  the $\O/\varpi^m$-module $H_{B,(c)}^i(\tilde{X}_{\tilde{K}},\V_{\tilde{\lambda}}/\varpi^m )$
carries a $G_{F_0}$-action.

We write $X^\star(T)^{+,P} \subset X^\star(T)^+$ for the subset of $(B \cap G)$-dominant characters. The set
$$W^P := \{ w \in W_{\tilde{G}} \vert w(X^\star (T)^+) \subset X^\star (T)^{+,P} \}$$
is the set of representatives with minimal length of the quotient $W_{\tilde{G}}/W_G$. It is called the set of Kostant representatives.

We write $\ell(w)$ for length of $w\in W^P$ and $p_{\tilde{\lambda}}(w)= [w(\tilde{\lambda}+\rho) -\rho](H)$ 
where $H = (0,0,...,0,1,1,...,1)$ (first $n$ components are $0$ and last $n$ components are $1$) and $\rho$ is half of sum of positive root of $\widetilde{G}_p$.

\begin{definition}
Let $\tilde{\lambda} \in (\Z_+^{2n})^\Hom(F^+,E)$

1) We say that $\tilde{\lambda}$ is regular if $\tilde{\lambda}_{\bar{\tau} ,i}> \tilde{\lambda}_{\bar{\tau},i+1}$ for any $\bar{\tau} \in \Hom(F^+,E)$ and $i \in [1,.2n-1]$

2)we set $|\tilde{\lambda}|_{\text{comp}} := \dim \tilde{X}_{\tilde{K}} + 1 + \sum_{\bar{\tau},i} |\tilde{\lambda}_{\bar{\tau,i}}|$. The integer
$|\tilde{\lambda}|_{\text{comp}}$ is called the comparison
size of  $\tilde{\lambda}$.
\end{definition}

Let $\text{Rep}_{\Z_p}(G_{\Q_p})$
be the category of $G_{\Q_p}$-modules of finite type over $\Z_p$ and $MF^{[0,p-2]}_{\Z_p}$ that of
finitely generated $\Z_p$-modules $M$ endowed with a filtration $(Fil^r M)_r$ such that
$Fil^r M$ is a direct factor, $Fil^0 M = M$ and $Fil^{p-1} = 0$ together with semi-linear
maps $\phi_r : Fil^r M \to M$ such that the restriction of $\phi_r$ to $Fil^{r+1} M$ is equal to
$p \phi_{r+1}$ and satisfying the strong divisibility condition : $M =\sum_{i\in \Z} \phi_r(Fil^r M)$.
Recall that by the theory of Fontaine-Laffaille \cite{FL}, we have a fully faithful functor

$$T_{cr} : MF^{[0,p-1]} \to \text{Rep}_{\Z_p} (G_{\Q_p})$$
A p-adic representation is called Fontaine-Laffaille if it is in the essential image of $T_{cr}$.

Let $\bar{X}$ be a smooth and proper scheme over $\Z_p$ of relative dimension $g$
and $D$ a relative divisor with normal crossings of $X$, we put $X = \bar{X} - D$.

\begin{theorem}[Theorem 5.3 of Faltings \cite{Fal}]\label{fal}

For any $j \in [0,p-2]$, we have following isomorphism:
$$T_{cr}(H^{j}_{log-cris,(c)}
(\bar{X}, \F_p)) \cong H^j_{\text{\'et},(c)} (X_{\bar{\Q}_p}, \F_p)$$

This isomorphism is compatible with the action of $G_{\Q_p}$. The isomorphism is functorial in
the proper smooth $\Z_p$-log scheme $\bar{X}$ and is compatible with the cup product structures
and with the formation of the Chern classes of line bundles over $\bar{X}$.
\end{theorem}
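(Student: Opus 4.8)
\emph{Proof proposal.} The plan is to run Faltings' almost-\'etale argument in the logarithmic setting attached to the pair $(\bar{X},D)$ and then to invert the resulting period-ring comparison by torsion Fontaine-Laffaille theory, which is precisely where the restriction $j\in[0,p-2]$ enters. First I would set up the log structures: endow $\bar{X}$ with the log structure defined by the normal crossings divisor $D$ and $\Z_p$ with the trivial one, so that $\bar{X}^{\mathrm{log}}\to\Z_p$ is log smooth with trivial locus $X=\bar{X}\smallsetminus D$. Then I would introduce Faltings' topos attached to $(\bar{X},X)$, whose objects are pairs $(U,W)$ with $U\to\bar{X}$ \'etale and $W\to (U\times_{\bar{X}}X)_{\bar{\Q}_p}$ finite \'etale, together with its projection to the \'etale site of $\bar{X}$ and to that of $X_{\bar{\Q}_p}$. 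A descent argument identifies the cohomology of the constant sheaf $\Z/p^m$ on this topos with $H^j_{\text{\'et}}(X_{\bar{\Q}_p},\Z/p^m)$, and, replacing the constant sheaf by $j_{!}\Z/p^m$, with the compactly supported cohomology; this is the form in which both sides of the theorem are realized.

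Next comes the local analysis. On the Faltings topos one has the integral structure sheaf $\bar{\O}$ sending $(U,W)$ to the normalization of $\O_U\otimes\O_{\bar{\Q}_p}$ in $\O(W)$. Covering $\bar{X}$ by ``small'' affines $U$, \'etale over $\Z_p[t_1^{\pm},\dots,t_r^{\pm},t_{r+1},\dots,t_d]$ with $D$ cut out by $t_{r+1}\cdots t_s$, the logarithmic almost purity theorem says that $\bar{\O}(U,W)$ is almost finite \'etale over $\bar{\O}(U,U_X)$ after passing up the Kummer tower $t_i\mapsto t_i^{1/p^k}$; hence $R\Gamma$ of $\Z/p^m\otimes\bar{\O}$ over $U$ is computed, up to almost-zero modules, by continuous cohomology of the geometric fundamental group of $U_X$ acting on $\bar{\O}$. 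I would then bring in the (sheafified, torsion) crystalline period sheaf $\mathcal{A}_{\mathrm{cris}}/p^m$ with its Frobenius and connection, and invoke the crystalline Poincar\'e lemma: the log--de Rham complex $\mathcal{A}_{\mathrm{cris}}\otimes_{\O_{\bar{X}}}\Omega^{\bullet,\mathrm{log}}_{\bar{X}/\Z_p}$ resolves $\mathcal{A}_{\mathrm{cris}}$ as a sheaf of $\Z/p^m$-modules, with the compact-support variant obtained by replacing $\Omega^{\bullet,\mathrm{log}}$ by $\Omega^{\bullet,\mathrm{log}}(-D)$ (which on the coherent side is exactly the twist by $I_D$ appearing in Section~3). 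Combining these and using that log-crystalline cohomology is computed by the log--de Rham complex produces a $G_{\Q_p}$-, Frobenius-, and filtration-compatible almost-isomorphism
$$ H^j_{\text{\'et},(c)}(X_{\bar{\Q}_p},\Z/p^m)\otimes_{\Z/p^m}\mathcal{A}_{\mathrm{cris}}/p^m \;\cong\; H^j_{\mathrm{log}\text{-}\mathrm{cris},(c)}(\bar{X},\Z/p^m)\otimes\mathcal{A}_{\mathrm{cris}}/p^m . $$

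To conclude I would invert this comparison. For $j\le p-2$ the module $M:=H^j_{\mathrm{log}\text{-}\mathrm{cris},(c)}(\bar{X},\F_p)$ is strongly divisible and lies in the Fontaine-Laffaille range $[0,p-2]$, the functor $T_{cr}$ is fully faithful and exact there, and $T_{cr}(M)$ can be described through $\mathrm{Fil}$- and $\phi$-equivariant maps into a torsion version of $\mathcal{A}_{\mathrm{cris}}$; extracting exactly these invariants from the displayed almost-isomorphism recovers the left-hand side, the almost-ambiguity disappearing because both cohomology groups are honest finite $\Z/p^m$-modules that $T_{cr}$ sees faithfully. This yields $T_{cr}(H^j_{\mathrm{log}\text{-}\mathrm{cris},(c)}(\bar{X},\F_p))\cong H^j_{\text{\'et},(c)}(X_{\bar{\Q}_p},\F_p)$ as $G_{\Q_p}$-modules. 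Functoriality in $\bar{X}$ is built in since the topos, $\bar{\O}$, $\mathcal{A}_{\mathrm{cris}}$ and the Poincar\'e lemma are all natural; compatibility with cup products follows from multiplicativity of the period sheaves and of the resolving complex; and for line bundles one checks that the crystalline and \'etale Chern classes are both the images of $\mathcal{L}$ under the respective boundary maps of the Kummer sequence $1\to\mu_{p^m}\to\mathbb{G}_m\to\mathbb{G}_m\to1$, which the comparison identifies.

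\textbf{Main obstacle.} The technical heart is the logarithmic input in the previous paragraph: establishing almost purity along the normal crossings divisor $D$ and, in parallel, the compact-support form of the crystalline Poincar\'e lemma with $\Omega^{\bullet,\mathrm{log}}(-D)$-coefficients, uniformly compatible with Frobenius, connection and filtration, and — crucially for the way the theorem is used later in the paper — with the Poincar\'e--Lefschetz duality relating the $(c)$ and non-$(c)$ cohomologies. The bookkeeping of almost-zero error terms, and the verification that they become genuinely invisible after passing to Fontaine-Laffaille modules, is exactly what forces the hypothesis $j\le p-2$; everything before that (the topos setup, descent, multiplicativity) is formal, and everything after (applying $T_{cr}$) is soft.
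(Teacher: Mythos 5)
There is nothing in the paper to compare your argument with: Theorem \ref{fal} is quoted verbatim as Theorem 5.3 of Faltings \cite{Fal}, and the paper gives no proof of it (nor is it expected to). So the only question is whether your sketch would stand on its own as a proof of Faltings' comparison, and it does not: it is a faithful roadmap of the almost-\'etale strategy, but every genuinely hard step is named rather than established. The logarithmic almost purity theorem, the descent identifying the cohomology of the Faltings topos with $H^j_{\text{\'et}}(X_{\bar{\Q}_p},\Z/p^m)$ (and with the compactly supported version via $j_!$), and the crystalline Poincar\'e lemma with its $\Omega^{\bullet,\mathrm{log}}(-D)$-variant are precisely the content of Faltings' paper; acknowledging them as ``the main obstacle'' means the proposal proves nothing beyond the formal shell around them.

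Two specific points deserve flagging because they are glossed over in a way that would fail as written. First, you take for granted that $M=H^{j}_{\mathrm{log}\text{-}\mathrm{cris},(c)}(\bar{X},\F_p)$, with its Hodge filtration and divided Frobenii, is a strongly divisible object of $MF^{[0,p-2]}$; this is itself part of the theorem's content (it requires, e.g., degeneration and splitting statements for the mod $p$ Hodge--de Rham spectral sequence in the Fontaine--Laffaille range) and cannot be used as an input to invert the comparison. Second, the assertion that ``the almost-ambiguity disappears because both cohomology groups are honest finite $\Z/p^m$-modules that $T_{cr}$ sees faithfully'' is not an argument: an almost-isomorphism after tensoring with $\mathcal{A}_{\mathrm{cris}}/p^m$ does not formally descend to an isomorphism of the underlying finite Galois modules. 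Faltings' actual deduction goes through a d\'evissage on the length of the coefficients, the explicit description of $T_{cr}$ by $\mathrm{Fil}$- and $\phi$-compatible maps into $A_{\mathrm{cris}}/p^m$, and, in the open case relevant here, a duality between the compactly supported and ordinary theories to control both at once. Until those steps are supplied, the proposal is an accurate table of contents for \cite{Fal}, not a proof; for the purposes of this paper the correct move is exactly what the text does, namely to cite Faltings.
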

This theorem implies that for any $\tilde{\lambda} \in (\Z_+^{2n})^\Hom(F^+,E)$ such that $|\tilde{\lambda}|_{comp} < p$ we have following isomorphism

\begin{equation} \label{fal1}
T_{cr}(H^{j}_{log-cris,(c)}
(\X_{\tilde{K}}^{tor}, \V_{\tilde{\lambda}}/\varpi)) \cong H^j_{\text{\'et},(c)} (\X_{\tilde{K}} \times_{\Z_p} \bar{\Q}_p, \V_{\tilde{\lambda}}/\varpi)
\end{equation}

\begin{remark}
Since in Faltings theorem the isomorphism is compatible with the formation of chern classes, then the isomorphism \ref{fal1} is compatible with the formation of chern classes. Therefore this isomorphism is compatible with the Hecke action on the both side. 
\end{remark}
\begin{theorem}[Lan-Polo \cite{LP}]\label{LP}
Assume that $|\tilde{\lambda}|_{comp} < p$. Then the set 
of Fontaine-Laffaille weights of $H_{B,(c)}^i(\tilde{X}_{\tilde{K}},\V_{\tilde{\lambda}}/\varpi )$ 
is subset of 
the set $\{ p_{\tilde{\lambda}}(w)\vert w\in W^P , \newline
 \ell(w) \leq i \}$. 
Moreover, multiplicity of $p_{\tilde{\lambda}}(w)$ is 
equal to $\F_p$ dimensional of $H^{i-\ell (w)} 
(\X_{\tilde{K},\kappa_\p}^{tor} , \newline
 \mathscr{E}_G 
(\V_{\tilde{\lambda}_w})^{can(sub)})$.
\end{theorem}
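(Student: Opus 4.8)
The plan is to deduce this from Faltings' comparison theorem (Theorem \ref{fal}) together with the Hodge-type filtration on log-crystalline cohomology, whose graded pieces are controlled by the Bernstein–Gelfand–Gelfand (BGG) resolution of $\V_{\tilde{\lambda}}$ in the category of $\widetilde{G}_p$-representations. First I would invoke the isomorphism \eqref{fal1}: under the hypothesis $|\tilde{\lambda}|_{comp} < p$, the étale cohomology $H^i_{\text{ét},(c)}(\X_{\tilde{K}}\times_{\Z_p}\bar{\Q}_p,\V_{\tilde\lambda}/\varpi)$ is Fontaine–Laffaille and corresponds under $T_{cr}$ to the log-crystalline cohomology $H^i_{\text{log-cris},(c)}(\X^{tor}_{\tilde K},\V_{\tilde\lambda}/\varpi)$, which by Proposition \ref{fal} above carries a Hodge filtration with direct-factor graded pieces. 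The Fontaine–Laffaille weights of the Galois representation are then precisely the jumps of this Hodge filtration (counted with multiplicity), so the task reduces to computing $\operatorname{gr}^\bullet$ of the de Rham/log-crystalline complex computing $H^\bullet_{(c)}(\X^{tor}_{\tilde K},\V_{\tilde\lambda}/\varpi)$.

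Next I would introduce the de Rham realization: $H^i_{(c)}(\X^{tor}_{\tilde K},\V_{\tilde\lambda})$ is computed by the de Rham complex $\mathscr{E}_{\tilde G}(\V_{\tilde\lambda})^{can(sub)}\otimes \Omega^\bullet_{\X^{tor}/\kappa_\p}(\log D)$ with its Gauss–Manin connection (with log poles), using the extension discussed in Subsection 3.3 and the remark identifying $\mathscr{E}_{\tilde G}(\V_{\tilde\lambda})^{can(sub)}$ with an idempotent cut of $\underline{H}^{dR}_m(A^m/\X_{\tilde K})^{can(sub)}(t_{\tilde\lambda})$. The key algebraic input is the dual BGG complex: following Faltings–Chai and Lan–Polo, the de Rham complex of $\mathscr{E}_{\tilde G}(\V_{\tilde\lambda})^{can(sub)}$ is quasi-isomorphic, compatibly with Hecke action and with the Hodge filtration, to a complex $\mathrm{BGG}^\bullet$ whose term in degree $\ell$ is $\bigoplus_{w\in W^P,\,\ell(w)=\ell}\mathscr{E}_G(\V_{\tilde\lambda_w})^{can(sub)}$, where $\tilde\lambda_w = w\cdot\tilde\lambda$ is the Kostant-twisted weight and the relevant Hodge-filtration jump of this summand is the integer $p_{\tilde\lambda}(w)=[w(\tilde\lambda+\rho)-\rho](H)$ coming from the $H$-weight on the Hodge cocharacter. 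This requires $|\tilde\lambda|_{comp}<p$ so that the weights $\tilde\lambda_w$ remain in the Fontaine–Laffaille range $[0,p-2]$ and so that the BGG complex has the expected shape over $\kappa_\p=\F_p$ (no wrap-around). Taking the hypercohomology spectral sequence of the stupid (Hodge) filtration on $\mathrm{BGG}^\bullet$, the $E_1$-page in bidegree contributing to weight $p_{\tilde\lambda}(w)$ is $H^{i-\ell(w)}(\X^{tor}_{\tilde K,\kappa_\p},\mathscr{E}_G(\V_{\tilde\lambda_w})^{can(sub)})$, and since there is no room for differentials between distinct Fontaine–Laffaille weights (this is exactly the degeneration statement, forced by the fact that Hodge–de Rham degenerates mod $p$ in this range via Faltings / Deligne–Illusie-type arguments, or simply because the weights are pairwise distinct so the spectral sequence is automatically $E_1$-degenerate after grading by weight), this group computes the multiplicity of $p_{\tilde\lambda}(w)$. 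The weights occurring thus lie in $\{p_{\tilde\lambda}(w):w\in W^P,\ \ell(w)\le i\}$, with the stated multiplicity, and the constraint $\ell(w)\le i$ is just the requirement that the summand contributes to $H^i$ via a term $H^{i-\ell(w)}(\cdots)$ with non-negative upper index.

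The main obstacle, in my view, is establishing the dual BGG decomposition \emph{integrally / mod $p$} and checking that it is compatible simultaneously with (i) the Hodge filtration (so that the weight attached to the $w$-summand is exactly $p_{\tilde\lambda}(w)$), (ii) the Hecke action at places outside $S$, and (iii) the log structure along the boundary $D$, for \emph{both} the canonical and subcanonical extensions. Over a field of characteristic $0$ this is Faltings–Chai; in characteristic $p$ it is precisely the content of Lan–Polo's work, and the hypothesis $|\tilde\lambda|_{comp}<p$ is what guarantees that Kostant's formula, the BGG resolution, and Fontaine–Laffaille theory all apply without the characteristic interfering. I would therefore structure the proof as: (1) reduce to log-crystalline via Theorem \ref{fal} and \eqref{fal1}; (2) replace the de Rham complex by the dual BGG complex, citing \cite{LP} for the mod-$p$ statement including compatibility with Hodge filtration, Hecke action, and the two boundary extensions; (3) run the Hodge spectral sequence, noting $E_1$-degeneration after grading by the pairwise-distinct weights $p_{\tilde\lambda}(w)$; (4) read off the set of weights and the multiplicity formula, and translate $\ell(w)\le i$ into the index condition. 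The only genuinely new verification needed on our side is that the Hecke operators $T_{v,i}$ act through correspondences that lift to the compactification $\X^{tor}_{\tilde K}$ and are compatible with the crystalline comparison — which follows from the compatibility of the Faltings isomorphism with Chern classes noted in the remark after \eqref{fal1}, applied to the graphs of the Hecke correspondences.
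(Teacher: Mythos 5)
Your proposal is correct and follows essentially the same route as the paper: the paper's proof likewise reduces, via Faltings' comparison (Theorem \ref{fal} and \eqref{fal1}), to the Hodge filtration on the log-crystalline cohomology of $\X_{\tilde{K}}^{tor}$ and then invokes Lan--Polo's dual BGG theorem (Theorem 5.9 of \cite{LP}) to identify the filtration jumps as the $p_{\tilde{\lambda}}(w)$ with the stated coherent-cohomology multiplicities. The extra details you supply (the BGG complex, the weight-graded $E_1$-degeneration, Hecke compatibility via Chern classes) are exactly the content the paper delegates to the citation of \cite{LP}.
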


\begin{proof}
By theorem \ref{fal} we have to consider the Hodge filteration of $H^{j}_{log-cris,(c)}
(\X_{\tilde{K}^{tor}}, \newline
 \V_{\tilde{\lambda}}/\varpi))$. Hence Theorem 5.9 of \cite{LP} implies this theorem. 
\end{proof}

\begin{corollary}\label{LP1}
Let $\tilde{\m}\subset \tilde{T}^S$ be a maximal ideal. Assume that $|\tilde{\lambda}|_{comp} < p$. Then the set 
of Fontaine-Laffaille weights of $H_{B,(c)}^i(\tilde{X}_{\tilde{K}},\V_{\tilde{\lambda}}/\varpi )_{\tilde{\m}}$ 
is subset of 
the set $\{ p_{\tilde{\lambda}}(w)\vert w\in W^P ,
 \ell(w) \leq i \}$. 
Moreover, multiplicity of $p_{\tilde{\lambda}}(w)$ is 
equal to $\F_p$ dimensional of $H^{i-\ell (w)} 
(\X_{\tilde{K},\kappa_\p}^{tor} ,
 \mathscr{E}_G 
(\V_{\tilde{\lambda}_w})^{can(sub)})_{\tilde{\m}}$.
\end{corollary}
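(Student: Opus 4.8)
The plan is to deduce Corollary \ref{LP1} from Theorem \ref{LP} by a purely formal localization argument, exploiting the fact that localization at a maximal ideal $\tilde{\m}\subset \tilde{\T}^S$ is exact and commutes with the Fontaine-Laffaille functor through the Hecke action. First I would note that all of the objects in Theorem \ref{LP} --- the Betti cohomology $H^i_{B,(c)}(\tilde{X}_{\tilde{K}},\V_{\tilde{\lambda}}/\varpi)$, its \'etale and log-crystalline incarnations, the Hodge filtration, and the coherent cohomology groups $H^{i-\ell(w)}(\X^{tor}_{\tilde{K},\kappa_\p},\mathscr{E}_G(\V_{\tilde{\lambda}_w})^{can(sub)})$ --- carry compatible actions of the abstract Hecke algebra $\tilde{\T}^S$ (for the coherent side this is the action on the minimal/toroidal compactification; for the comparison isomorphisms this is the content of the Remark after \eqref{fal1}, since Chern classes and cup products are Hecke-equivariant). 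Hence every map in the proof of Theorem \ref{LP} --- the Faltings comparison \eqref{fal1}, the identification of the Hodge filtration, and the Lan--Polo degeneration --- is a morphism of $\tilde{\T}^S$-modules.

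Next I would apply the localization functor $M\mapsto M_{\tilde{\m}}$, which is exact on $\O$-modules (indeed on $\tilde{\T}^S$-modules), to the statement of Theorem \ref{LP}. Concretely: the Fontaine-Laffaille module $H^j_{log\text{-}cris,(c)}(\X^{tor}_{\tilde{K}},\V_{\tilde{\lambda}}/\varpi)$ is a $\tilde{\T}^S$-module with a $\tilde{\T}^S$-stable filtration, and since $T_{cr}$ is fully faithful and $\tilde{\T}^S$-equivariant, localizing at $\tilde{\m}$ gives $T_{cr}$ applied to the localized filtered module, which corresponds under \eqref{fal1} (also a $\tilde{\T}^S$-isomorphism) to $H^j_{\text{\'et},(c)}(\X_{\tilde{K}}\times_{\Z_p}\bar{\Q}_p,\V_{\tilde{\lambda}}/\varpi)_{\tilde{\m}} \cong H^i_{B,(c)}(\tilde{X}_{\tilde{K}},\V_{\tilde{\lambda}}/\varpi)_{\tilde{\m}}$. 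The graded pieces of the localized Hodge filtration are, by the Lan--Polo computation localized at $\tilde{\m}$, exactly the $H^{i-\ell(w)}(\X^{tor}_{\tilde{K},\kappa_\p},\mathscr{E}_G(\V_{\tilde{\lambda}_w})^{can(sub)})_{\tilde{\m}}$ with $\ell(w)\le i$. Therefore the Fontaine-Laffaille weights appearing in the localized cohomology are those $p_{\tilde{\lambda}}(w)$ for which the corresponding localized coherent cohomology group is nonzero, and the multiplicity of each such weight is its $\F_p$-dimension; since a subquotient (here, a localization, hence a direct summand as $\O/\varpi$-vector spaces) can only lose weights, the set of weights of the localization is a subset of the original set $\{p_{\tilde{\lambda}}(w)\mid w\in W^P,\ \ell(w)\le i\}$, giving the first assertion, and the multiplicity statement follows from exactness of localization applied to each graded piece.

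The one genuine point to verify --- and the only place where this is more than bookkeeping --- is that the Hodge (de Rham) filtration on $H^j_{log\text{-}cris,(c)}(\X^{tor}_{\tilde{K}},\V_{\tilde{\lambda}}/\varpi)$ is stable under $\tilde{\T}^S$, so that localization is compatible with passing to $\mathrm{gr}^\bullet$. This is where I expect the main (minor) obstacle to lie: one needs the Hecke correspondences to act on the whole log-de Rham complex on the toroidal compactification compatibly with the filtration b\^ete, which is standard but should be cited carefully --- it follows from the functoriality of $\mathscr{E}_G(-)^{can(sub)}$ under prime-to-$p$ isogenies recorded in Subsection 3.3, together with the fact that the Hecke operators at $v\notin S$ are defined by such correspondences. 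Granting that, each $\mathrm{gr}^k$ is a $\tilde{\T}^S$-module and $(\mathrm{gr}^k M)_{\tilde{\m}} \cong \mathrm{gr}^k(M_{\tilde{\m}})$ by exactness, which is exactly what the proof needs. No new ingredient beyond Theorem \ref{LP} and the exactness of localization is required.
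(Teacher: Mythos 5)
Your argument is correct and is essentially the paper's own proof: the paper likewise deduces the corollary from Theorem \ref{LP} by noting that the Faltings comparison isomorphism \eqref{fal1} and the Lan--Polo result are compatible with the $\tilde{\T}^S$-action, so that one may localize at $\tilde{\m}$ throughout. Your additional care about the Hecke-stability of the Hodge filtration and the exactness of localization simply makes explicit what the paper leaves implicit.
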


\begin{proof}
Since the isomorphism \ref{fal1} and theorem 5.9 of \cite{LP} are compatible with the Hecke action $\tilde{T}^S$, theorem \ref{LP} implies this corollary.
\end{proof}

\subsection{Dualizing sheaf of toroidal compactification}

Generalizing previous results by Mumford, Harris and Faltings-Chai, K.-W. Lan determined the dualizing sheaf for the arithmetic toroidal compactification
of PEL Shimura varieties.  The result for $\widetilde{G}=\widetilde{G}_n$ is as follows. We fix a sufficiently fine fan so that $\X_{\tilde{K}}^{tor}$ is smooth.

\begin{theorem}
The dualizing sheaf of $\X_{\tilde{K}}^{tor}$ is isomorphic to $\mathscr{E}_G 
(\V_{-2\rho_{nc}})^{sub}$ where $\rho_{nc}$ is the half of the sum of all positive roots of $\tilde{G}$ which are not positive roots for $G$.
\end{theorem}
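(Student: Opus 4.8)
The plan is to identify the dualizing sheaf of $\X_{\tilde{K}}^{tor}$ using the fact that this toroidal compactification is smooth (by our choice of a sufficiently fine fan $\Sigma$) and $D$ is a normal crossings divisor, so that the dualizing sheaf is $\Omega^{g}_{\X_{\tilde{K}}^{tor}/\Z_p} = \det \Omega^1_{\X_{\tilde{K}}^{tor}/\Z_p}$, where $g = \dim \X_{\tilde{K}}^{tor}$ equals the relative dimension over $\Z_p$. The strategy is then to express $\Omega^1_{\X_{\tilde{K}}^{tor}/\Z_p}$ in terms of automorphic vector bundles: on the open part $\X_{\tilde{K}}$ the Kodaira–Spencer isomorphism identifies $\Omega^1_{\X_{\tilde{K}}/\Z_p}$ with $\Sym^2$ of the Hodge bundle (or the appropriate PEL analogue, a quotient of $\underline{\mathrm{Lie}}^\lor \otimes \underline{\mathrm{Lie}}^\lor$ cut out by the hermitian symmetry), i.e. with $\mathscr{E}_G(W)$ for an explicit representation $W$ of $G_p$; over the compactification this extends to the canonical extension, and the passage from $\X_{\tilde{K}}$ to $\X_{\tilde{K}}^{tor}$ introduces exactly the ideal sheaf $I_D$ of the boundary (logarithmic Kodaira–Spencer), turning the canonical extension into the sub-canonical extension $\mathscr{E}_G(W)^{sub}$.

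First I would recall the Kodaira–Spencer isomorphism for the Siegel-type PEL datum: the Gauss–Manin connection $\nabla$ on $\underline{H}^1_{dR}(A/\X_{\tilde{K}})$ induces, via the Hodge filtration $0 \to \underline{\omega}_{A} \to \underline{H}^1_{dR} \to \underline{\mathrm{Lie}}_{A^\lor} \to 0$ and the polarization pairing, an isomorphism $\mathrm{KS}\colon \Omega^1_{\X_{\tilde{K}}/\Z_p} \xrightarrow{\sim} (\underline{\mathrm{Lie}}^\lor_{A^\lor/\X_{\tilde{K}}} \otimes_{\O_F \otimes \O} \underline{\mathrm{Lie}}^\lor_{A^\lor/\X_{\tilde{K}}})^{\mathrm{herm}}$; as a $G$-representation the target is the representation of highest weight for which $\mathscr{E}_G$ of it equals $\Omega^1_{\X_{\tilde{K}}/\Z_p}$. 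Taking determinants and matching highest weights, $\det \Omega^1_{\X_{\tilde{K}}/\Z_p} \cong \mathscr{E}_G(\V_{-2\rho_{nc}})$, where $-2\rho_{nc}$ is the character by which $G$ acts on the top exterior power of the tangent space of the compact dual at the base point — this is the standard identification of the anticanonical weight with the sum of the noncompact roots. Second, by Lan's results on toroidal compactifications (the logarithmic Kodaira–Spencer isomorphism, cf. \cite{lan}, \cite{lan2}), $\mathrm{KS}$ extends to an isomorphism $\Omega^1_{\X_{\tilde{K}}^{tor}/\Z_p}(\log D) \cong \mathscr{E}_G(\cdots)^{can}$ of the sheaf of log differentials with the canonical extension; hence $\Omega^1_{\X_{\tilde{K}}^{tor}/\Z_p} \cong \Omega^1_{\X_{\tilde{K}}^{tor}/\Z_p}(\log D) \otimes \O(-D)$, and taking determinants and using $\mathscr{E}_G(W)^{sub} = \mathscr{E}_G(W)^{can} \otimes I_D$ with $I_D = \O(-D)$, I get $\omega_{\X_{\tilde{K}}^{tor}} = \det \Omega^1 \cong \mathscr{E}_G(\V_{-2\rho_{nc}})^{can} \otimes I_D = \mathscr{E}_G(\V_{-2\rho_{nc}})^{sub}$.

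The main obstacle is the careful bookkeeping at the boundary: one must verify that the Kodaira–Spencer map really does extend to an isomorphism onto the \emph{canonical} extension (not merely a map, and with the correct twist by $I_D$ once), which relies on Lan's detailed description of $\underline{H}^1_{dR}(A/\X_{\tilde{K}})^{can}$ together with the connection with log poles, and on the fact that the boundary divisor $D$ is reduced with normal crossings so that $\omega_{\X_{\tilde{K}}^{tor}} = \Omega^g(\log D) \otimes \O(-D)$ holds exactly. A secondary point is checking the weight computation $\det$ of the Kodaira–Spencer target $= \V_{-2\rho_{nc}}$, which is a purely combinatorial identification of highest weights once one writes down the action of $G = \res_{\O_F/\Z}\GL_n$ on $\underline{\mathrm{Lie}}^\lor$; here one uses that the roots of $\tilde{G}$ not in $G$ are precisely the "off-diagonal" roots contributing to the Siegel unipotent radical, whose sum is $2\rho_{nc}$. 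I would cite \cite{lan} and \cite{lan2} for the extension statement rather than reproving it, and present the weight bookkeeping explicitly since it pins down the normalization used elsewhere in the paper.
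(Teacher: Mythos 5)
Your proposal is correct and matches the substance of the paper's proof: the paper simply cites Harris (Prop.\ 2.2.6 of \cite{har}) and Lan (Thm.\ 6.4.1.1(4) of \cite{lan}), and those references establish the statement by exactly the route you outline — the (log) Kodaira–Spencer isomorphism identifying $\Omega^1_{\X_{\tilde{K}}^{tor}}(\log D)$ with a canonical extension of an automorphic bundle, the weight computation $\det \cong \V_{-2\rho_{nc}}$, and $\omega_{\X_{\tilde{K}}^{tor}} \cong \det\Omega^1(\log D)\otimes I_D$ yielding the subcanonical extension. The only caveat is the boundary bookkeeping you already flag, which is precisely what Lan's theorem supplies, so citing it as you propose is appropriate.
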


\begin{proof}
See Proposition 2.2.6 of \cite{har} for the rational case $\X_{\tilde{K}}^{tor} \times \Q$ and Theorem 6.4.1.1
(4) of \cite{lan} for the integral case.
\end{proof}

\begin{corollary}\label{Ser}
For any field $\kappa$ such that its characteristic does not belong to $\underline{S}$ and for any representation $W\in \text{Rep}_{\kappa}(G_p)$,
there is a Serre duality isomorphism
$$R\Hom_\kappa (R\Gamma(\X_{\tilde{K},\kappa}^{tor},\mathscr{E}_G 
(W)^{sub}),\kappa) \cong  R\Gamma(\X_{\tilde{K},\kappa}^{tor},\mathscr{E}_G 
(W^\vee \otimes \V_{-2\rho_{nc}})^{can}))$$
 Moreover this isomorphism, is compatible with the Hecke
action in the sense the action of $[\tilde{K}g\tilde{K}]$
 on the left matches the action of $[\tilde{K}g\tilde{K}]^t =
[\tilde{K} g^{-1} \tilde{K}]$ on the right.

\end{corollary}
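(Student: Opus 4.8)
The plan is to reduce the statement to Grothendieck--Serre duality on the smooth proper $\kappa$-scheme $\X_{\tilde{K},\kappa}^{tor}$, combined with the description of its dualizing sheaf recalled above and the (exact, monoidal) functoriality of the automorphic bundle construction $\mathscr{E}_G(-)^{can}$.

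Since the characteristic of $\kappa$ does not lie in $\underline{S}$, the base change $\X_{\tilde{K},\kappa}^{tor}$ of $\X_{\tilde{K}}^{tor}$ is smooth and proper over $\kappa$; write $N$ for its dimension. For $W\in\text{Rep}_\kappa(G_p)$ free over $\kappa$, the sheaf $\mathscr{E}_G(W)^{can}$ is locally free, and since $I_D$ is an invertible ideal sheaf (the boundary $D$ being a Cartier divisor) so is $\mathscr{E}_G(W)^{sub}=\mathscr{E}_G(W)^{can}\otimes_{\mathscr{O}}I_D$. I would first apply Serre duality to this locally free sheaf to obtain a canonical isomorphism
$$R\Hom_\kappa\big(R\Gamma(\X_{\tilde{K},\kappa}^{tor},\mathscr{E}_G(W)^{sub}),\kappa\big)\;\cong\;R\Gamma\big(\X_{\tilde{K},\kappa}^{tor},(\mathscr{E}_G(W)^{sub})^\vee\otimes\omega\big)[N],$$
where $(\cdot)^\vee$ denotes the $\mathscr{O}$-linear dual and $\omega=\omega_{\X_{\tilde{K},\kappa}^{tor}/\kappa}$ is the canonical sheaf. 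The shift $[N]$ is the usual one in Grothendieck duality; since it is irrelevant for the later applications I suppress it, in line with the statement.

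Next I would rewrite the right-hand side. As $I_D$ is invertible, $(\mathscr{E}_G(W)^{sub})^\vee=(\mathscr{E}_G(W)^{can})^\vee\otimes I_D^{-1}$; and because $W\mapsto\mathscr{E}_G(W)^{can}$ is an exact tensor functor from finite free $G_p$-representations to locally free sheaves (hence commutes with duals and tensor products) one has $(\mathscr{E}_G(W)^{can})^\vee\cong\mathscr{E}_G(W^\vee)^{can}$. By the theorem on the dualizing sheaf recalled above, $\omega\cong\mathscr{E}_G(\V_{-2\rho_{nc}})^{sub}=\mathscr{E}_G(\V_{-2\rho_{nc}})^{can}\otimes I_D$. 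Tensoring and cancelling the factors $I_D^{\pm1}$, then applying the tensor-functor property once more,
$$(\mathscr{E}_G(W)^{sub})^\vee\otimes\omega\;\cong\;\mathscr{E}_G(W^\vee)^{can}\otimes\mathscr{E}_G(\V_{-2\rho_{nc}})^{can}\;\cong\;\mathscr{E}_G(W^\vee\otimes\V_{-2\rho_{nc}})^{can},$$
which is precisely the sheaf on the right of the corollary; this gives the isomorphism of complexes.

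Finally I would verify Hecke equivariance. The operator $[\tilde{K}g\tilde{K}]$ is given by a correspondence $\X_{\tilde{K}',\kappa}^{tor}\rightrightarrows\X_{\tilde{K},\kappa}^{tor}$ with proper projections $p_1,p_2$ (finite étale on the open parts, carrying boundary to boundary), realized after replacing $\tilde{K}$ by a smaller level $\tilde{K}'$ and, if necessary, refining the cone decomposition; the action on $R\Gamma(\mathscr{E}_G(W)^{sub})$ is $p_{1*}\circ\mathrm{tr}\circ p_2^{*}$, and likewise on the canonical side, both being independent of the fan. Since Grothendieck duality is compatible with proper pushforward and its trace morphisms, dualizing $p_{1*}p_2^{*}$ produces $p_{2*}p_1^{*}$ on the dual complex — i.e.\ the transposed correspondence, which represents $[\tilde{K}g\tilde{K}]^t=[\tilde{K}g^{-1}\tilde{K}]$. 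I expect the main obstacle to be exactly this last bookkeeping: one must match the duality trace maps against the trace maps defining the Hecke action, invoke the independence of $R\Gamma(\X^{tor},\mathscr{E}_G(W)^{can(sub)})$ of the chosen fan so that the correspondences are well defined on cohomology, and check that $p_1,p_2$ respect the boundary ideal $I_D$; the purely sheaf-theoretic part above is routine once the dualizing-sheaf theorem and the tensor-functoriality of $\mathscr{E}_G(-)^{can}$ are granted.
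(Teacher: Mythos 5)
Your argument is correct and is essentially the proof the paper intends (the paper states the corollary without detail, relying on the dualizing-sheaf theorem, the smoothness/Cohen--Macaulay remark, and the exact tensor functoriality of $W\mapsto\mathscr{E}_G(W)^{can}$, exactly as you do): Serre duality for the locally free sheaf $\mathscr{E}_G(W)^{sub}$ on the smooth proper $\X_{\tilde{K},\kappa}^{tor}$, cancellation of the $I_D^{\pm 1}$ factors to pass from the subcanonical to the canonical extension, and transposition of the Hecke correspondence under duality giving $[\tilde{K}g\tilde{K}]^t=[\tilde{K}g^{-1}\tilde{K}]$. Your bookkeeping of the cohomological shift and of the fan-independence of the correspondences is a reasonable filling-in of details the paper leaves implicit.
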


\begin{remark}
(1) Note that for using Serre duality we need to have a Cohen-Macaulay scheme. In our case, the compactified Shimura variety $\X_{\tilde{K},\kappa}^{tor}$
 is smooth and therefore is Cohen-Macaulay. 

(2) In general case when $\tilde{K}_p$ is not hyperspecial, the compactified Shimura variety $\tilde{X}_{\tilde{K}}^{tor}$ is Cohen-Macaulay.
\end{remark}

\section{Galois representation}

Let $\pi$ be a cohomological regular cuspidal 
automorphic 
representation of $\GL_n (\A_F)$ 
with level $K$ and cohomological weight $\lambda$ 
(with pure weight $w\in\Z$). With the notations of the previous section, $\pi$ is 
unramified outside of $S^\prime$. Assume that
$K_p$ is hyperspecial so that $S^\prime\cap S_p=\emptyset$. 
Let $\m 
\subset \T^S$ be associated maximal 
ideal of Hecke algebra. By \cite{hltt} we can construct a Galois representation associated to $\m$, 
which we denote by $\bar{\rho}_\m: G_F \to \GL_n (\kappa)$, such that for any $v \notin S$, the characteristic polynomial of $\bar{\rho}_\m (\Frob_v)$ is equal to:

$$P_v (X) := X^n - T_{v,1} X^{n-1} + ... + (-1)^j q_{v}^{\frac{j(j-1)}{2}} T_{v,j} X^{n-j} + ... + (-1)^n q_{v}^{n(n-1)/2} T_{v,n} , \text{mod} \m.$$

We say that $\m$ is non-Eisenstein if $\bar{\rho}_\m$ is absolutely irreducible.

We introduce the Fontaine-Laffaille condition

$$(FL)\quad \mbox{\rm for any}\, \tau \in \Hom(F,E) \, \mbox{\rm we have}\,\,   p-2n-1 \geq \lambda_{\tau,1} + \lambda_{\tau c,1} - \lambda_{\tau,n} - \lambda_{\tau c,n}.$$

Let $\varepsilon\colon G_F\to \Z_p^\times$ be the $p$-adic cyclotomic charac and $\bar{\varepsilon}\colon G_F\to \F_p^\times$ its reduction modulo $p$.
From now on we assume $\m$ non-Eisenstein and $(FL)$.

\begin{theorem} \label{gal} Assuming the above assumptions and $K_p$ is hyperspecial, then for any $v \in S_p$, the restriction of $\bar{\rho}_\m$ to $G_{F_v}$ is Fontaine-Laffaille
 with $\tau$-Fontaine-Laffaille weight $(\lambda_{\tau ,1}+n-1, \lambda_{\tau ,2}+n-2,..., \lambda_{\tau,n})$. Also we have:

$$ \bar{\rho}_\m \vert_{I_v} \cong \begin{pmatrix}
\delta_1 & \star & \cdots & \star\\
0 & \delta_2 & \cdots & \star\\
\vdots& \vdots & \ddots & \vdots\\
0 & 0 & \cdots & \delta_n

\end{pmatrix}$$

where $\delta_1, \delta_2,..., \delta_n: I_v \rightarrow \bar{\F}_{p}^\times$
 are tame characters, whose product equals
 $\bar{\varepsilon}^{(w+{n(n-1)\over 2})}$ and whose sum has Fontaine-Laffaille
weights $(\lambda_{\tau ,1}+n-1, \lambda_{\tau ,2}+n-2,..., \lambda_{\tau,n})_{\tau \in \Hom(F_v,E)}$ .
\end{theorem}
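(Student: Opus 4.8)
\textbf{Proof proposal for Theorem \ref{gal}.}

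The plan is to reduce the statement to a local crystallinity computation at each place $v \in S_p$, using the global Galois representation $\bar\rho_\m$ furnished by \cite{hltt} together with the comparison between Betti and crystalline cohomology coming from Faltings' theorem (Theorem \ref{fal}) applied to the locally symmetric space $X_K$ — or rather to a suitable Shimura-variety lift where cohomology can be described coherently. First I would recall that $S' \cap S_p = \emptyset$ because $K_p$ is hyperspecial, so $\bar\rho_\m$ is unramified at every $v \in S_p$ on the nose; the Fontaine--Laffaille condition $(FL)$ guarantees that the relevant cohomological degrees lie in the range $[0,p-2]$ where $T_{cr}$ is an equivalence onto its essential image, so that $\bar\rho_\m|_{G_{F_v}}$ (as a subquotient of étale cohomology with $\V_\lambda/\varpi$-coefficients) is in the essential image of $T_{cr}$, i.e. Fontaine--Laffaille. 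The $\tau$-Hodge--Tate/Fontaine--Laffaille weights are then read off from the Hodge filtration on the corresponding log-crystalline cohomology group: the Hodge filtration jumps are governed by the highest weight $\lambda$ via the Kostant/BGG description of the associated graded (as in Theorem \ref{LP}), and after the standard shift by $\rho$ one obtains exactly $(\lambda_{\tau,1}+n-1,\dots,\lambda_{\tau,n})$. This is the mechanism already used in Corollary \ref{LP1} for the unitary side, transported to the $\GL_n$ side.

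Next I would establish the shape of $\bar\rho_\m|_{I_v}$. Since $\bar\rho_\m|_{G_{F_v}}$ is Fontaine--Laffaille with the $n$ distinct-or-not weights listed, its restriction to inertia $I_v$ is, by the general structure theory of Fontaine--Laffaille modules over an unramified base (the relevant $F_v/\Q_p$ is unramified because $p$ splits in $F_0$ and $K_p$ is hyperspecial, so one is in the unramified Fontaine--Laffaille situation), a successive extension of tame characters $\delta_1,\dots,\delta_n : I_v \to \bar\F_p^\times$ whose associated Hodge--Tate weights are precisely the Fontaine--Laffaille weights $(\lambda_{\tau,i}+n-i)_{\tau}$. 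Concretely, one uses that an $n$-dimensional Fontaine--Laffaille $\F_p$-representation with Hodge--Tate weights in a range of length $<p-1$ becomes, on $I_v$, an iterated extension of powers of fundamental characters determined by those weights; upper-triangularity is the assertion that the filtration by these sub-Fontaine--Laffaille modules is $G_{F_v}$-stable, hence $I_v$-stable, giving the displayed matrix form with the $\delta_i$ on the diagonal in the asserted order (dictated by the ordering $\lambda_{\tau,1}\ge \cdots \ge \lambda_{\tau,n}$).

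Finally I would pin down the determinant and the collection of Fontaine--Laffaille weights of $\bigoplus_i \delta_i$. The product $\delta_1\cdots\delta_n$ is the restriction to $I_v$ of $\det\bar\rho_\m$, which is a global character unramified outside $S$; evaluating characteristic polynomials at Frobenii via the formula $P_v(X)$ defining $\bar\rho_\m$, the constant term shows $\det\bar\rho_\m = \bar\varepsilon^{\,w + n(n-1)/2}$ up to a finite-order unramified twist, and restricting to inertia kills the unramified part, yielding $\delta_1\cdots\delta_n = \bar\varepsilon^{\,w + n(n-1)/2}$ on $I_v$. The statement about the sum of Fontaine--Laffaille weights of $\bigoplus\delta_i$ being $(\lambda_{\tau,1}+n-1,\dots,\lambda_{\tau,n})_{\tau\in\Hom(F_v,E)}$ is just the compatibility of Hodge--Tate weights with the associated graded of the filtration, already obtained in the first paragraph. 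I expect the main obstacle to be bookkeeping: matching the $\tau$-indexed Hodge filtration on the de Rham/crystalline realization with the combinatorial highest-weight data precisely (including the $\rho$-shift and the purity normalization), and checking that the comparison isomorphism of Theorem \ref{fal}/equation \eqref{fal1} is available in the needed degrees, i.e. that $(FL)$ indeed forces $|\tilde\lambda|_{comp} < p$ for the auxiliary unitary lift whose cohomology realizes $\bar\rho_\m$ as a Hecke-summand. Reconciling the $\GL_n$-coefficient system with the Shimura-variety coherent cohomology — rather than having a direct crystalline comparison on $X_K$ itself — is where I would need to invoke the boundary embedding of Section 5 and Theorem 4.2.1 of \cite{ACC} carefully, though for the local weight computation at $v \in S_p$ one only needs the statement for the $\GL_n$-local system, which is classical (Clozel, and \cite{hltt} for the torsion case).
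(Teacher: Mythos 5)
Your proposal founders on two points, one outright false and one which is precisely the hard content that you defer to references that do not contain it. First, the claim that ``$S'\cap S_p=\emptyset$ because $K_p$ is hyperspecial, so $\bar{\rho}_\m$ is unramified at every $v\in S_p$ on the nose'' is wrong and contradicts the very statement you are proving: the characterization of $\bar{\rho}_\m$ via characteristic polynomials of Frobenii holds only at $v\notin S$, and $S_p\subset S$. Hyperspecial level at $p$ means $\pi$ is \emph{spherical} at $p$, which predicts crystalline/Fontaine--Laffaille behaviour at $v\mid p$, not unramifiedness; indeed the $\delta_i$ of the theorem are in general nontrivial on $I_v$ (their product is $\bar{\varepsilon}^{w+n(n-1)/2}$, a nontrivial power of the cyclotomic character on inertia). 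Second, the pivotal input --- that $\bar{\rho}_\m|_{G_{F_v}}$ \emph{is} Fontaine--Laffaille with $\tau$-weights $(\lambda_{\tau,1}+n-1,\dots,\lambda_{\tau,n})$ --- cannot be extracted the way you suggest: Faltings' comparison (Theorem \ref{fal}) applies to proper smooth (log) schemes over $\Z_p$, and $X_K$ for $\GL_n$ over a CM field is not algebraic; and \cite{hltt}, which you invoke as ``classical,'' gives no information whatsoever about $\bar{\rho}_\m$ at places above $p$. This local--global compatibility at $p$ in the torsion/CM (non conjugate self-dual) setting is exactly the delicate point; the available statement is \cite[Theorem 4.5.1]{ACC} (Fontaine--Laffaille local--global compatibility, proved up to a nilpotent ideal), or one uses that $\m$ arises from a characteristic-zero cuspidal $\pi$ and known crystallinity results for its $p$-adic realization before reducing mod $\varpi$. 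Your own closing paragraph flags this reconciliation as an ``obstacle'' but then waves it away, so the proposal as written has a genuine gap at its central step.

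For comparison, the paper itself gives essentially no argument: it cites \cite{FL} for the structure theory of Fontaine--Laffaille modules (from which the upper-triangular shape over $I_v$ with tame characters of the stated weights and determinant follows, once FL-ness with the prescribed weights is granted), together with the remark that the weights $\lambda_{\tau,1}+n-1>\lambda_{\tau,2}+n-2>\dots>\lambda_{\tau,n}$ are mutually distinct --- which, note, is automatic from dominance of $\lambda$, so your worry about orderings is unnecessary. Your second and third paragraphs (inertia shape via the wild-inertia filtration and tame quotient, determinant on inertia via the constant term of $P_v(X)$ and purity) are consistent with what \cite{FL} delivers and are fine in substance; the defect is that the FL-ness and weight computation you treat as the easy, ``classical'' part is the one thing that actually needs a correct source or argument.
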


See \cite{FL}. Note that this theorem requires the fact that for every $\tau\in \Hom(F,E)$, the Fontaine-Laffaille weights 
$\lambda_{\tau ,1}+n-1, \lambda_{\tau ,2}+n-2,..., \lambda_{\tau,n}$ are mutually distinct. 

\begin{remark} \label{llc}
If $\bar{\delta}_i: \O_{F_v} \to \bar{\F}_p^\times$ is correspondence to $\delta_i$ by local class field theory, 
then it defines by $x \mapsto \prod_{\tau \in \Hom(F_v,E)} \overline{\tau (x)}^{\lambda_\tau+n-i}$.
\end{remark}

\section{A direct summand of the boundary cohomology}

In this section, we mention three important theorems,  the first is about the occurrence of the cohomology of $X_K$ in the boundary cohomology of $\tilde{X}_{\tilde{K}}$. 
The last two ones are cohomology vanishing theorems for $\tilde{X}_{\tilde{K}}$.

From this section we denote $\tilde{\m} := \bS^{-1}(\m)$

\subsection{Embedding theorems}

In this subsection, we recall a theorem about embedding of cohomology of $X_K$ in the boundary cohomology of $\tilde{X}_{\tilde{K}}$. 
The theorem is theorem 4.2.1 of \cite{ACC}.
Let $U$ be the unipotent radical of the Siegel parabolic $P$ and $P=H\cdot U$ be a Levi decomposition of the Siegel parabolic $P$ in $\tilde{H}$.
Let us say that a compact open subgroup $\tilde{K} \subset \tilde{G}_f$ is decomposed with respect to the Siegel parabolic $P$ if
$P(F^+_f)\cap \tilde{K} =(H(F^+_f)\cap \tilde{K} )\cdot (U(F^+_f)\cap \tilde{K} )$. Let $\tilde{K}_P=P(F^+_f)\cap \tilde{K}$ and $\tilde{K}_U=U(F^+_f)\cap \tilde{K}$.

Let $\tilde{X}_{\tilde{K}}^{BS}$ be the Borel-Serre compactification of $\tilde{X}_{\tilde{K}}$ and $\tilde{X}_{\tilde{K}}^P$ be $P$-strata of the Borel-Serre compactification as subsection 2.1 \cite{ACC}. 
Let $\partial \tilde{X}_{\tilde{K}}=\tilde{X}_{\tilde{K}}^{BS}-\tilde{X}_{\tilde{K}}$ be the boundary of $\tilde{X}_{\tilde{K}}^{BS}$. 
The action of the Hecke algebra $\widetilde{T}^S$ on $R\Gamma( \tilde{X}_{\tilde{K}}, \V_{\tilde{\lambda}})$ in the derived category of $\O$-modules  $D(\O)$ 
extends canonically to $R\Gamma(\tilde{X}_{\tilde{K}}^{BS}, \V_{\tilde{\lambda}})$;
 it preserves $R\Gamma(\partial \tilde{X}_{\tilde{K}}, \V_{\tilde{\lambda}})$ and $R\Gamma(\tilde{X}^P_{\tilde{K}}, \V_{\tilde{\lambda}})$
(see \cite[Subsection 2.1]{ACC}).

By \cite[Theorem 2.4.2]{ACC}, the natural embedding $\tilde{X}_{\tilde{K}}^P \hookrightarrow \partial \tilde{X}_{\tilde{K}}$ induces an isomorphism:
$$R\Gamma(\tilde{X}_{\tilde{K}}^P ,
 \V_{\tilde{\lambda}})_{\tilde{\m}} \cong R\Gamma(\partial \tilde{X}_{\tilde{K}}, \V_{\tilde{\lambda}})_{\tilde{\m}}$$

\begin{remark}
The key ingredients for proving the above isomorphism are

1)  $
\bar{\rho}_{\tilde{\m}} \cong \bar{\rho}_\m \oplus \bar{\rho}
_{\m}^{c,\vee} \otimes \bar{\epsilon}^{1-2n}$ is the 
direct sum of two $n$-dimensional absolutely irreducible Galois representations, 

2) if $R\Gamma(\tilde{X}_{\tilde{K}}^Q ,
 \V_{\tilde{\lambda}})_{\tilde{\m}} \neq 0$ for another $\Q$-rational standard parabolic subgroup $Q$ of $\tilde{G}$, then $\bar{\rho}_{\tilde{\m}}$ 
admits another decomposition as a sum of irreducible Galois representations with respect to $Q$, which is a contradiction.
\end{remark}

Denote $\tilde{K}_P = \tilde{K} \cap p(\Q_f)$ and $\tilde{K}_U = \tilde{K} \cap U(\Q_f)$.  
Arguing in the same way as on \cite[p. 58]{NT}, we see that there is an isomorphism

\begin{equation}\label{iso}
R\Gamma(\tilde{X}_{\tilde{K}}^P ,
 \V_{\tilde{\lambda}}) \cong R\Gamma(\tilde{K}_{P}^S \times K_S, R\Gamma(\text{Inf}_{G^S \times K_S}^{P^S \times K_S} \mathbf{X}_G, R1_{\star}^{\tilde{K}_{U,S}} \V_{\tilde{\lambda}}))
\end{equation}

where $\mathbf{X}_G := G(\Q)\backslash X_G \times G(\Q_f)$ and $R1_{\star}^{\tilde{K}_{U,S}}$ is the derived functor of the functor of $\tilde{K}_{U,S}$-fixed points.
\\

The following theorem is \cite[Theorem 4.2.1]{ACC} for a special partition $(S_1,S_2)$ of the set $S_p$ of places above $p$; namely, $S_1 = S_p$ and $S_2=\emptyset$.

\begin{theorem} \label{acc1}
Let $\tilde{K} \subset \tilde{G}_f$ be a neat compact open subgroup 
which is decomposed with respect to the parabolic subgroup $P=G\cdot U$, and with the property that for each $\bar{v} \in \bar{S}_p, \tilde{K}_{U,\bar{v}} = 
U( \O_{F^{+}_{\bar{v}}} )$. Let $\tilde{\lambda} \in (\Z_{+}^{2n})^{\Hom(F^+,E)}$ and
 $ \lambda \in (\Z_{+}^n)^{\Hom(F,E)}$ be corresponding dominant weights for $\tilde{G}$ and $G$, respectively. We also assume that $p > n^2$.

Then for any $m \geq 1$, $R\Gamma (X_K, \V_{\lambda}/ 
\varpi^m)_{\m}$ is a 
$\tilde{\T}^S$-equivariant direct summand 
of $R\Gamma(\partial \tilde{X}_{\tilde{K}}, \V_{\tilde{\lambda}}/ \varpi^m)_{\tilde{\m}}$.

\end{theorem}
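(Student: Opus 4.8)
The plan is to deduce the statement from \cite[Theorem 4.2.1]{ACC}, which proves the same assertion for an arbitrary partition $(S_1,S_2)$ of $S_p$; here one only needs the specialization $S_1=S_p$, $S_2=\emptyset$, so I will recall the mechanism of that proof and indicate how our hypotheses enter. The first step is to replace the whole boundary by the Siegel stratum. By the isomorphism recalled above --- a consequence of \cite[Theorem 2.4.2]{ACC}, which applies precisely because $\bar\rho_{\tilde\m}\cong\bar\rho_\m\oplus\bar\rho_\m^{c,\vee}\otimes\bar\epsilon^{1-2n}$ is a sum of two absolutely irreducible $n$-dimensional representations, so that no other $\Q$-rational parabolic contributes after localizing at $\tilde\m$ --- the inclusion $\tilde X_{\tilde K}^P\hookrightarrow\partial\tilde X_{\tilde K}$ induces a $\tilde\T^S$-equivariant isomorphism
$$R\Gamma(\tilde X_{\tilde K}^P,\V_{\tilde\lambda}/\varpi^m)_{\tilde\m}\ \xrightarrow{\ \sim\ }\ R\Gamma(\partial\tilde X_{\tilde K},\V_{\tilde\lambda}/\varpi^m)_{\tilde\m},$$
so it suffices to exhibit $R\Gamma(X_K,\V_\lambda/\varpi^m)_\m$ as a $\tilde\T^S$-equivariant direct summand of the left-hand side.

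Next I would feed in the description \eqref{iso} of $R\Gamma(\tilde X_{\tilde K}^P,\V_{\tilde\lambda}/\varpi^m)$ in terms of the adelic symmetric space $\mathbf X_G$ for $G$ and the derived $\tilde K_{U,S}$-invariants of the coefficient system. At the places $\bar v\in\bar S\setminus\bar S_p$ the group $\tilde K_{U,\bar v}$ is pro-$\ell$ with $\ell\ne p$ and contributes nothing in positive degree, while at $\bar v\in\bar S_p$ the hypothesis $\tilde K_{U,\bar v}=U(\O_{F^+_{\bar v}})$ identifies the contribution with $R\Gamma(U(\O_{F^+_{\bar v}}),\V_{\tilde\lambda}/\varpi^m)$, which, since $p>n^2$ is large relative to $\tilde\lambda$, is computed by Lie algebra cohomology $H^\bullet(\mathfrak u,\V_{\tilde\lambda}/\varpi^m)$. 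By Kostant's theorem this decomposes, as a module over $H$ (hence over $G$), as $\bigoplus_{w\in W^P}\V_{\tilde\lambda_w}/\varpi^m\,[-\ell(w)]$, and since the unipotent radical of the Siegel parabolic is abelian the underlying complex splits degree by degree; substituting back one obtains a $\tilde\T^S$-equivariant decomposition
$$R\Gamma(\tilde X_{\tilde K}^P,\V_{\tilde\lambda}/\varpi^m)\ \cong\ \bigoplus_{w\in W^P}R\Gamma\big(X_K,\V_{\tilde\lambda_w}/\varpi^m\big)\,[-\ell(w)].$$
The summand attached to the distinguished Kostant representative picked out by the condition that $(\tilde\lambda,\lambda)$ are corresponding weights is, by the very choice of that condition and the conventions of \cite{ACC}, canonically $R\Gamma(X_K,\V_\lambda/\varpi^m)$ with $\tilde\T^S$ acting through the pullback along $\bS$ of the usual $\T^S$-action; on the $w$-th summand the $\tilde\T^S$-action is the pullback along $\bS$ of the $\T^S$-action twisted by an explicit unramified character $\xi_w$ (a monomial in the residue cardinalities $q_v$) depending only on $w$, as prescribed by the normalizations of Section 2.

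Finally I would localize this decomposition at $\tilde\m=\bS^{-1}(\m)$. The $w$-th summand is nonzero after localization only if the $\xi_w$-twist of the Hecke eigensystem defining $\m$ again lies over $\tilde\m$ under $\bS$, equivalently only if the semisimple Galois parameter it cuts out is an $n$-dimensional constituent of $\bar\rho_{\tilde\m}$. Since $\bar\rho_{\tilde\m}$ is the sum of the two distinct absolutely irreducible pieces $\bar\rho_\m$ and $\bar\rho_\m^{c,\vee}\otimes\bar\epsilon^{1-2n}$, only the distinguished representative --- contributing $R\Gamma(X_K,\V_\lambda/\varpi^m)_\m$ --- and at most one companion representative attached to the conjugate-dual piece can survive, so the $\tilde\m$-localized complex is the direct sum of $R\Gamma(X_K,\V_\lambda/\varpi^m)_\m$ with a single further shifted term; combined with the first step this is exactly the asserted direct summand decomposition.

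The step I expect to be the main obstacle --- and the one carried out in detail in \cite[\S4.2]{ACC}, where the bound $p>n^2$ is used --- is the validity mod $\varpi^m$ of the Lazard comparison $R\Gamma(U(\O_{F^+_{\bar v}}),-)\cong H^\bullet(\mathfrak u,-)$ and of the Kostant decomposition, together with the degeneration and degree-wise splitting of the Hochschild--Serre-type spectral sequence underlying \eqref{iso} and the construction of the $\tilde\T^S$-equivariant retraction onto the $\V_\lambda$-summand; once these are in place, the remaining bookkeeping of the Satake twists $\xi_w$ and of $\tilde\T^S$-equivariance is routine given Section 2.
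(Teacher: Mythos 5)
Your first step (passing from $\partial\tilde{X}_{\tilde{K}}$ to the Siegel stratum via \cite[Theorem 2.4.2]{ACC} after localization at $\tilde{\m}$, then invoking the isomorphism (\ref{iso})) is the same as the paper's. After that you diverge, and the divergence introduces genuine gaps. You propose to compute the full derived $\tilde{K}_{U,S}$-invariants by a Lazard comparison with Lie algebra cohomology and a Kostant decomposition mod $\varpi^m$, and then to split the resulting complex degree by degree. But Theorem \ref{acc1} assumes only $p>n^2$; there is no hypothesis bounding $\tilde{\lambda}$ in terms of $p$, so the phrase ``since $p>n^2$ is large relative to $\tilde{\lambda}$'' is not available, and the mod-$\varpi^m$ Kostant decomposition requires $p$-smallness of the weight (this is exactly what the paper needs Polo--Tilouine \cite{PT} for in Section 6, under $|\tilde{\lambda}|_{comp}<p$). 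Even granting $p$-smallness, those results give the cohomology groups as Levi-modules, not a splitting of $R\Gamma(U(\O_{F^+_{\bar{v}}}),\V_{\tilde{\lambda}}/\varpi^m)$ in the derived category of smooth $K_S$-representations; abelianness of $U$ gives a Koszul-type complex but does not force such formality. Your final step (deciding which Kostant summands survive localization at $\tilde{\m}$ by matching Galois parameters) would moreover require Galois representations attached to torsion eigensystems in all the auxiliary weights $\tilde{\lambda}_w$ --- substantive input that is neither in \cite[\S 4.2]{ACC} nor needed, since a direct-summand claim only requires exhibiting the one distinguished summand. So the steps you defer to the reference are not the ones carried out there, and under the stated hypotheses they would fail.

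The paper's proof (following \cite[Theorem 4.2.1]{ACC}) is much softer and avoids all of this: by (\ref{iso}) it suffices to exhibit $\V_\lambda$ as a direct summand of $R1_{\star}^{\tilde{K}_{U,S}}\V_{\tilde{\lambda}}$ as a $K_S$-representation. The section is the composite of the $\tilde{K}_P$-equivariant embedding $\V_\lambda\to\V_{\tilde{\lambda}}^{\tilde{K}_{U,S}}$ with the canonical map from invariants to derived invariants; the retraction is the map $R1_{\star}^{\tilde{K}_{U,S}}\V_{\tilde{\lambda}}\to\V_{\tilde{\lambda}}$ given by restriction to the trivial subgroup, followed by the $K$-equivariant splitting $\V_{\tilde{\lambda}}\to\V_\lambda$ (this splitting is where the hypothesis on $p$ enters, not a Lazard comparison). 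No computation of higher derived invariants, no Kostant decomposition mod $\varpi^m$, and no analysis of the other summands after localization is required. To rescue your route you would have to add a $p$-smallness hypothesis and prove the asserted derived splitting; the soft argument makes both unnecessary.
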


\begin{proof}
By using isomorphism \ref{iso}, We should show that $\V_\lambda$ is direct summand of $ R1_{\star}^{\tilde{K}_{U,S}} \V_{\tilde{\lambda}}$ as $K_S$-representation.
Note that there is a $\widetilde{K}_P$-equivariant embedding $\V_{\lambda} \to \V_{\widetilde{\lambda}}$, which splits after restriction to $K$. 

The morphism $\V_{\lambda} \to R1_\star^{\tilde{K}_{U,S}} \V_{\tilde{\lambda}}$ is the composition of the given map 
$\V_\lambda \to \V_{\tilde{\lambda}}^{\tilde{K}_{U,S}}$ with the morphism 
$\V_{\tilde{\lambda}}^{\tilde{K}_{U,S}} \to R1_\star^{\tilde{K}_{U,S}}\V_{\tilde{\lambda}}$
 whose existence is assured by the universal property of the derived functor.

The $R1_\star^{\tilde{K}_{U,S}}\V_{\tilde{\lambda}} \to \V_\lambda$ is the composition of the morphism $R1_\star^{\tilde{K}_{U,S}}\V_{\tilde{\lambda}} \to \V_\lambda$ (given by restriction to the trivial subgroup) and the $K$-equivariant splitting $\V_{\tilde{\lambda}} \to \V_{\lambda}$. This completes the proof.
\end{proof}

By the above theorem we can conclude that 

\begin{equation}\label{cohoembed}
 H^{i} (X_K, \V_\lambda / 
\varpi)_{\m} \hookrightarrow H_{\partial}^i (\tilde{X}_{\tilde{K}}, \V_{\tilde{\lambda}}/ \varpi)_{\tilde{\m}} 
\end{equation}
as $\tilde{\T}^S$-equivariant direct summand.

\subsection{Vanishings Theorems for the unitary Shimura variety}

The first theorem is the main theorem of \cite{CS} which states the vanishing of the cohomology of $\tilde{X}_{\tilde{K}}$ 
below the middle degree, after localization at certain maximal ideals $\tilde{\m}$ of the Hecke algebra.
Let $L$ be a number field with absolute Galois group $G_L$ and $\kappa$ a finite field of characteristic $p$. 
Let $\bar{r} : G_L \to \GL_n(\kappa)$ be a continuous representation.
\begin{definition}

1)  We say that a prime $\ell \neq p$ is decomposed generic for $\bar{r}$
if $\ell$ splits completely in $L$ and for all places $v \vert \ell$ of $L$, $\bar{r}\vert_{G_{L_v}}$ is unramified and the eigenvalues (with multiplicity) $\alpha_1,.., \alpha_n \in \bar{\kappa}$ of $\bar{r}(\Frob_v)$ satisfy $\alpha_i / \alpha_j \notin \{ 1, q_v \}$ for all $i \neq j$.

2) We say that $\bar{r}$ is decomposed generic if there exists a
prime $\ell \neq p$ which is decomposed generic for $\bar{r}$.
\end{definition}

\begin{theorem} \label{cs}
Assume the following conditions
\begin{itemize}
    \item[(1)] $F^+ \neq \Q$;
    \item[(2)] $\bar{\rho}_{\tilde{\m}}$ is of length at most 2;
    \item[(3)] $\bar{\rho}_{\tilde{\m}}$ is decomposed generic,
\end{itemize}

Then $H^i (\tilde{X}_{\tilde{K}}, \V_{\tilde{\lambda}}/ \varpi)_{\tilde{\m}} = 0$ for all $ i < d$ and 
$H_{c}^i (\tilde{X}_{\tilde{K}}, \V_{\tilde{\lambda}}/ \varpi)_{\tilde{\m}} = 0$ for all $i > d$.
\end{theorem}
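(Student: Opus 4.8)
The plan is to follow the perfectoid‑geometry method of Caraiani--Scholze \cite{CS}. First I would pass to infinite level at $p$: the inverse limit $\tilde{\mathscr{X}}_{\tilde{K}^p}=\varprojlim_{\tilde{K}_p}\tilde{X}_{\tilde{K}^p\tilde{K}_p}$ and its minimal (Baily--Borel) compactification $\tilde{\mathscr{X}}^{*}_{\tilde{K}^p}$ are perfectoid adic spaces over $\C_p$, carrying the Hodge--Tate period map $\pi_{\mathrm{HT}}\colon \tilde{\mathscr{X}}^{*}_{\tilde{K}^p}\to\mathscr{F}\ell$ to the flag variety $\mathscr{F}\ell=\widetilde{G}/P_\mu$ attached to the Shimura cocharacter; under our hypotheses on $p$ it is, over $\bar{\Q}_p$, a product over $\Hom(F^+,\bar{\Q}_p)$ of Grassmannians $\mathrm{Gr}(n,2n)$, of dimension $n^2 d$. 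By the primitive comparison theorem, $R\Gamma(\tilde{X}_{\tilde{K},\bar{\Q}},\V_{\tilde{\lambda}}/\varpi)$ is computed, compatibly with the Hecke action, by $R\Gamma(\mathscr{F}\ell,R\pi_{\mathrm{HT},*}\mathscr{L}_{\tilde{\lambda}})$ for a suitable $\F_p$-local system $\mathscr{L}_{\tilde{\lambda}}$ on $\tilde{\mathscr{X}}_{\tilde{K}^p}$ (and by $R\Gamma_c(\mathscr{F}\ell,R\pi_{\mathrm{HT},!}\mathscr{L}_{\tilde{\lambda}})$ for compact supports).

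The geometric input is the description of $\pi_{\mathrm{HT}}$ over the Newton stratification $\mathscr{F}\ell=\coprod_{b\in B(\widetilde{G},\mu)}\mathscr{F}\ell_b$: the fibre of $\pi_{\mathrm{HT}}$ over a geometric point of $\mathscr{F}\ell_b$ is, up to a pro-(finite \'etale) perfectoid torsor, the Igusa variety $\mathrm{Ig}^b$ attached to the isocrystal $b$, on which the reductive group $J_b(\Q_p)$ acts. I would then run the excision spectral sequence for this stratification and reduce the vanishing of $H^i(\tilde{X}_{\tilde{K}},\V_{\tilde{\lambda}}/\varpi)_{\tilde{\m}}$ for $i<d$ to the claim that, after localising at $\tilde{\m}$, the contribution of each stratum $\mathscr{F}\ell_b$ starts in degree $\geq d$. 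This is governed by two facts: that $R\Gamma(\mathscr{F}\ell_b,-)$ has the expected cohomological amplitude in terms of $\dim\mathscr{F}\ell_b$ and the partial affineness of the strata; and, crucially, that the localised Igusa cohomology $R\Gamma_{(c)}(\mathrm{Ig}^b,\V_{\tilde{\lambda}}/\varpi)_{\tilde{\m}}$ is concentrated in a single degree. Hypothesis (2) restricts which $b$ can carry a class with eigensystem $\tilde{\m}$ and with what multiplicity — this is what makes the argument yield exactly the range $i<d$ rather than full concentration in the middle degree — while (1) and (3) are what make the next step, the concentration of Igusa cohomology, go through.

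The main obstacle is this concentration of Igusa cohomology. To prove it I would use Mantovan's formula together with Shin's averaged trace formula to express $[R\Gamma_c(\mathrm{Ig}^b,\V_{\tilde{\lambda}}/\varpi)]$ in a Grothendieck group in terms of automorphic representations of the groups $J_b$ and of their endoscopic groups, and then invoke \emph{decomposed genericity} of $\bar{\rho}_{\tilde{\m}}$: a prime $\ell\neq p$ that is decomposed generic forces Satake parameters at $\ell$ incompatible with those of every non-tempered or Eisenstein contribution, so only the tempered, essentially cuspidal part survives after localising at $\tilde{\m}$, and tracking degrees gives concentration in the middle degree of $\mathrm{Ig}^b$; propagating this through the excision spectral sequence yields $H^i(\tilde{X}_{\tilde{K}},\V_{\tilde{\lambda}}/\varpi)_{\tilde{\m}}=0$ for $i<d$. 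The statement $H^i_c(\tilde{X}_{\tilde{K}},\V_{\tilde{\lambda}}/\varpi)_{\tilde{\m}}=0$ for $i>d$ follows by running the dual argument with $R\pi_{\mathrm{HT},!}$ in place of $R\pi_{\mathrm{HT},*}$ — equivalently, by Poincar\'e--Verdier duality on $\tilde{X}_{\tilde{K}}$ together with the high-degree vanishing of $H^\bullet(\tilde{X}_{\tilde{K}},-)_{\tilde{\m}^\vee}$, noting that $\tilde{\m}^\vee$ still satisfies (2) and (3).
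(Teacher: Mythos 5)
The paper gives no independent argument here: Theorem \ref{cs} is simply quoted as the main theorem of Caraiani--Scholze \cite{CS}, and your outline (perfectoid infinite level, Hodge--Tate period map, Newton stratification with Igusa-variety fibres, trace-formula input showing the localized Igusa cohomology is controlled by decomposed genericity, then duality for the compactly supported half) is exactly the structure of the proof in that reference. So your proposal takes essentially the same route as the paper, with the substantive work deferred to the same machinery of \cite{CS}.
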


The second theorem is the main theorem of \cite{LS}.

\begin{theorem} \label{ls}
Assume that $p > |\tilde{\lambda}|_\text{comp}$, that the weight $\tilde{\lambda}$ is regular and $\tilde{K}_p$ is hyperspecial. then
 $H^i (\tilde{X}_{\tilde{K}}, \V_{\tilde{\lambda}}/ \varpi) = 0$ for all $i < n^2 d=\dim \tilde{X}_{\tilde{K}}$.
\end{theorem}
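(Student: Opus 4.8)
The plan is to transfer the question from the Betti cohomology of the local system to the coherent cohomology of automorphic vector bundles on the toroidal compactification, and then to settle the coherent vanishing by a positivity (Kodaira-type) argument valid in characteristic $p$. Since $\tilde K_p$ is hyperspecial, the integral model $\X_{\tilde K}$ is smooth over $\Z_p$ with smooth toroidal compactification $\X_{\tilde K}^{tor}$ as in Section~3, so Faltings' comparison (Theorem~\ref{fal}) applies: as $p>|\tilde\lambda|_{comp}$ one has $H^i_B(\tilde X_{\tilde K},\V_{\tilde\lambda}/\varpi)\cong H^i_{log-cris}(\X_{\tilde K}^{tor},\V_{\tilde\lambda}/\varpi)$, and the latter is computed through its Hodge filtration by the integral dual BGG resolution of Faltings--Chai in the form used by Lan--Polo (the mechanism underlying Theorem~\ref{LP}). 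This produces a spectral sequence
\[
E_1^{\ell,q}=\bigoplus_{\substack{w\in W^P\\ \ell(w)=\ell}}H^q\big(\X_{\tilde K,\kappa}^{tor},\mathscr{E}_G(\V_{\tilde\lambda_w})^{can}\big)\ \Longrightarrow\ H^{\ell+q}_{log-cris}(\X_{\tilde K}^{tor},\V_{\tilde\lambda}/\varpi),
\]
where $\tilde\lambda_w:=w(\tilde\lambda+\rho)-\rho$. Hence it suffices to prove $H^q(\X_{\tilde K,\kappa}^{tor},\mathscr{E}_G(\V_{\tilde\lambda_w})^{can})=0$ whenever $\ell(w)+q<n^2d$, that is, for $q<n^2d-\ell(w)$.

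For this coherent vanishing the hypothesis that $\tilde\lambda$ is \emph{regular} is essential: regularity is preserved by the affine Kostant action, so every $\tilde\lambda_w=w(\tilde\lambda+\rho)-\rho$ is again regular, and each $\mathscr{E}_G(\V_{\tilde\lambda_w})$ is the automorphic vector bundle attached to a weight in the interior of the relevant cone. I would then push forward along the contraction $\X_{\tilde K}^{tor}\to\X_{\tilde K}^{\min}$ to the minimal compactification, whose Hodge (determinant) line bundle $\omega$ is ample, and combine the ampleness of a sufficiently high power of $\omega$ with a Kodaira--Akizuki--Nakano-type vanishing theorem for coherent sheaves in characteristic $p$. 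The smallness bound $p>|\tilde\lambda|_{comp}$ is exactly what allows such a characteristic-$p$ vanishing to hold without Frobenius-twist corrections: the relevant graded pieces are small enough relative to $p$ for a Deligne--Illusie / $W_2(\kappa)$-liftability argument, or the explicit positivity estimates of \cite{LS}, to go through. The outcome is that $H^q(\X_{\tilde K,\kappa}^{tor},\mathscr{E}_G(\V_{\tilde\lambda_w})^{can})$ is concentrated in the single degree $q=n^2d-\ell(w)$, and in particular vanishes for $q<n^2d-\ell(w)$. (One must keep track of the canonical versus subcanonical extension along the boundary divisor $D$ and of the boundary strata of $\X_{\tilde K}^{tor}$, which are themselves Shimura varieties for smaller groups; Lan's explicit description of the toroidal charts makes this bookkeeping effective, and regularity makes the two extensions behave uniformly.) Feeding this into the spectral sequence gives $H^i_B(\tilde X_{\tilde K},\V_{\tilde\lambda}/\varpi)=0$ for all $i<n^2d=\dim\tilde X_{\tilde K}$.

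The main obstacle is precisely this effective characteristic-$p$ Kodaira-type vanishing for automorphic coherent sheaves. Over $\C$ the concentration of $H^\bullet(\X^{tor},\mathscr{E}_G(\V_{\tilde\lambda_w})^{can})$ in top degree — hence the vanishing of $H^i_B(\tilde X_{\tilde K},\V_{\tilde\lambda}\otimes\C)$ below the middle degree for regular $\tilde\lambda$ — is classical (Faltings, Li--Schwermer, Saper), resting on Kodaira--Akizuki--Nakano vanishing for the positive automorphic bundles together with the dual BGG resolution. Reproving it integrally requires controlling $p$-torsion in coherent cohomology, the behaviour at the toroidal boundary, and the genuine failure of Kodaira vanishing in positive characteristic; isolating the single clean hypothesis $p>|\tilde\lambda|_{comp}$ under which all of this works simultaneously for every Kostant representative is the technical core of \cite{LS}.
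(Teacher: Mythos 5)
The paper does not actually prove this statement: Theorem \ref{ls} is imported verbatim as the main theorem of Lan--Suh \cite{LS}, and your proposal is in substance an outline of Lan--Suh's own argument (Faltings' comparison, the dual BGG mechanism underlying Theorem \ref{LP}, and a characteristic-$p$ Kodaira-type vanishing for canonical/subcanonical extensions driven by the ampleness of the Hodge line bundle on the minimal compactification), with the decisive coherent vanishing explicitly deferred to \cite{LS}. So your route is consistent with the paper's treatment, which is simply a citation; what you have written is a correct high-level summary of the cited proof rather than an independent argument. Two cautions if you intend it as more than a summary: first, the claim that $H^q(\X_{\tilde{K},\kappa}^{tor},\mathscr{E}_G(\V_{\tilde{\lambda}_w})^{can})$ is concentrated in the single degree $q=n^2d-\ell(w)$ is stronger than what \cite{LS} establishes (and than what one should expect on a noncompact Shimura variety, where boundary classes contribute in other degrees); only the one-sided vanishing for $q<n^2d-\ell(w)$ is available, and that is all the spectral sequence argument needs. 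Second, ``regularity is preserved by the dot action'' is not by itself the positivity hypothesis of \cite{LS}: their coherent vanishing requires explicit conditions on each $\tilde{\lambda}_w$ relative to the boundary degenerations together with the bound $p>|\tilde{\lambda}|_{\text{comp}}$, and deducing these from regularity of $\tilde{\lambda}$ is part of the technical content of \cite{LS}, not a formality one can wave through.
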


Let us consider the long exact sequence
of the boundary:

$$ \ldots \rightarrow H_{c}^i (\tilde{X}_{\tilde{K}}, \V_{\tilde{\lambda}}/ \varpi) \rightarrow H^i (\tilde{X}_{\tilde{K}}, \V_{\tilde{\lambda}}/ \varpi) \rightarrow H_{\partial}^i (\tilde{X}_{\tilde{K}}, \V_{\tilde{\lambda}}/ \varpi) \rightarrow \ldots$$ \label{exact}

By Theorem \ref{cs}, we can conclude that 

\begin{equation} 
  H_{\partial}^i (\tilde{X}_{\tilde{K}}, \V_{\tilde{\lambda}}/ \varpi)_{\tilde{\m}}  \cong    H_{c}^{i+1} (\tilde{X}_{\tilde{K}}, \V_{\tilde{\lambda}}/ \varpi)_{\tilde{\m}} 
\end{equation}

as $\tilde{\T}^S$-module for all $i < n^2 d-1$.

Also under the assumptions of Theorem \ref{ls}, we can also conclude that 

\begin{equation} \label{ls2}
  H_{\partial}^i (\tilde{X}_{\tilde{K}}, \V_{\tilde{\lambda}}/ \varpi) \cong    H_{c}^{i+1} (\tilde{X}_{\tilde{K}}, \V_{\tilde{\lambda}}/ \varpi)
\end{equation}

as $\tilde{\T}^S$-module for all $i < n^2 d-1$ .

\section{Boundary cohomology of $\tilde{X}_{\tilde{K}}$}

Our aim in this section is to prove that for any $i<d$, the semi-simplification of
 $H_{\partial}^i (\tilde{X}_{\tilde{K}}, \V_{\tilde{\lambda}}/ \varpi )[\tilde{\m}]$ as $G_{F_0}$-module is a direct sum of characters.

Let $ j: \tilde{X}_{\tilde{K}} \hookrightarrow \tilde{X}_{\tilde{K}}^{\ast} \hookleftarrow \partial\tilde{X}_{\tilde{K}}^{\ast}:i $
where $j$, rep. $i$, is the open immersion, resp. closed. immersion of the boundary, into the minimal compactification of $\tilde{X}_{\tilde{K}}$ over $\Z[\frac{1}{\underline{S}}]$.  
It yields an isomorphism between $H_{\partial}^i (\tilde{X}_{\tilde{K}}, \V_{\tilde{\lambda}}/ \varpi)$ and 
$H^i (\partial\tilde{X}_{\tilde{K}}^{\ast}, i^{\ast} Rj_{\ast} \V_{\tilde{\lambda}}/ \varpi)$. 

We have a spectral sequence:
$$H^i (\partial\tilde{X}_{\tilde{K}}^{\ast}, i^{\ast} R^j j_{\ast} \V_{\tilde{\lambda}}/ \varpi) \Rightarrow 
H^{i+j} (\partial\tilde{X}_{\tilde{K}}^{\ast}, i^{\ast} Rj_{\ast} \V_{\tilde{\lambda}}/ \varpi)$$

 Let $P_a=M_a U_{P_a}$ be the $a$-th standard maximal parabolic subgroup of $\tilde{G}$ for any $a \in [1,n]$ where the Levi subgroup $M_a$ is isomorphic to 
$\tilde{G}_a \times \res_{\O_F/\Z} \GL_{n-a}$ and $U_{P_a}$ denotes the unipotent radical of $P_a$. Let $P_{a,h}$ be the inverse image of $\tilde{G}_a$ 
under the homomorphism $\kappa_a: P_a \to P_a/U_{P_a}=: M_a$
 Denote $M_{a,l}:= \res_{\O_F/\Z} \GL_{n-a}$ and $M_{a,h}:= \tilde{G}_a$.

 Over $\C$, we have a stratification of the boundary of the minimal compactification of the form 
$\partial\tilde{X}_{\tilde{K}}^{*} = \tilde{X}_{n-1} \coprod \tilde{X}_{n-2} \coprod ... \coprod \tilde{X}_0$ where $\tilde{X}_a$
 is a disjoint union of  $U(a,a)$-Shimura varieties $\tilde{X}_a =\coprod_{\bar{x}} \tilde{X}_{a,\bar{x}}$, 
where $x$ runs over the finite set $P(\Q)P_{a,h}(\Q_f)\backslash\tilde{G}(\Q_f)\newline/\tilde{K}$, and $\bar{x}$
 denotes an arbitrary representative of $x$ in $\tilde{G}(\Q_f)$; for later use, we may and do choose $x$ 
so that its $p$-component $x_p$ is trivial and also we have 
$\tilde{X}_{a,\bar{x}} := \tilde{G}_a(\Q)\backslash \tilde{G}_i(\Q_\A)/\tilde{K}_{a.h}^x$ where 
$\tilde{K}_a^x = x (\tilde{K}\cdot \tilde{K}_\infty) x^{-1} \cap  P_{a} (\Q_\A)$ and $\tilde{K}_{a,h}^x := \tilde{K}_a^x \cap \tilde{G}_a(\Q_\A)$.

\begin{remark}
 This stratification extends to the integral structure of Shimura varieties (cf. Theorem 7.2.4.1  of \cite{lan}).
 \end{remark}
 
This stratification on the minimal compactification gives us a stratification spectral sequence of \'etale cohomology groups (carrying Galois action of $G_{F_0}$):

\begin{equation}\label{sp1}
 E^{a-1,b}_1:= \oplus_{\bar{x}} H_{c}^{a+b-1} (\tilde{X}_{a,\bar{x}},  
Rj_{\ast} \V_{\tilde{\lambda}}/ \varpi 
\vert_{\tilde{X}_{a,\bar{x}}}) \Rightarrow H^{a+b-1} 
(\partial\tilde{X}_{\tilde{K}}^{\ast}, i^{\ast} 
Rj_{\ast} \V_{\tilde{\lambda}}/ \varpi) 
\end{equation}

This spectral sequence degenerates at $E_2$. There is also another spectral sequence of \'etale cohomology groups (also carrying Galois action of $G_{F_0}$):

\begin{equation}\label{sp2}
E_{1}^{b,c} = H_{c}^{c} (\tilde{X}_{a,\bar{x}},  
R^b j_{\ast} \V_{\tilde{\lambda}}/ \varpi 
\vert_{\tilde{X}_{a,\bar{x}}}) \Rightarrow H_{c}^{b+c} (\tilde{X}_{a,\bar{x}},  
Rj_{\ast} \V_{\tilde{\lambda}}/ \varpi \vert_{\tilde{X}_{a,\bar{x}}})
\end{equation}

This spectral sequence degenerates at $E_2$.

By the main result of \cite{pink}, the locally constant sheaf $R^b j_{\ast} \V_{\tilde{\lambda}}/ \varpi \vert_{\tilde{X}_{i,\bar{x}}}$ 
is isomorphic to the locally constant sheaf associated to the $\tilde{K}_{a,h}^x$-module

$$\bigoplus_{s=0}^bH^{b-s}(\Gamma_{M_{a,l}}(x), H^s(\Gamma_{U_{P_a}}(x),\V_{\tilde{\lambda}}/\varpi))$$

Where 
$$\Gamma_{M_{a,l}}(x) := M_{a,l}(\Q)\cap (\tilde{K}_{a,l}^x \times M_{a,l}(\Q_\infty)), \text{ for } \tilde{K}_{a,l}^x := \kappa_a(\tilde{K}^x)\cap M_{a,l}(\Q_f)$$
and
$$\Gamma_{U_{P_a}}(x):= U_{P_a}(\Q) \cap (\tilde{K}^x \cap U_{P_a}(\Q_\A))$$

Assume that $\tilde{\lambda}$ is $p$-small. Then by Theorem B of Polo and Tilouine \cite{PT}, we have for each $s\leq q$ an isomorphism of $M_a$-modules: 

$$H^s (\Gamma_{U_{P_a}}(x), \V_{\tilde{\lambda}}/\varpi) \cong \bigoplus_{w \in W^{P_a},\ell(w)= s} \V_{M_a,\tilde{\lambda}_w}/\varpi $$

where $\tilde{\lambda}_w = w(\tilde{\lambda} + \rho) - \rho$. Therefore as $\tilde{K}_{a,h}^x$-module, we have 
$$ 
H^{q-s}(\Gamma_{M_{a,l}}(x), H^s(\Gamma_{U_{P_a}}(x),\V_{\tilde{\lambda}}/\varpi)) \cong 
\bigoplus_{w \in W^{P_a},\ell(w)= s} H^{b-s}(\Gamma_{M_{a,l}}(x),\V_{M_{a,l},\tilde{\lambda}_w}/\varpi)) \otimes \V_{M_{a,h},\tilde{\lambda}_{w}} / \varpi$$

In particular, The Galois action on the cohomology of $\tilde{X}_{a,x}$ arises only from the second factors
 of each summand. Denote $V_w^{s,a} := H^s(\Gamma_{M_{a,l}}(x),\V_{M_{a,l},\tilde{\lambda}_w}/\varpi)$. 
Then we can compute the left-hand side of the spectral sequence \ref{sp2}. Namely, we have an isomorphism of $G_{F_0}$-modules:

\begin{equation} \label{dec}
H_{c}^{c} (\tilde{X}_{a,\bar{x}},  
R^b j_{\ast} \V_{\tilde{\lambda}}/ \varpi 
\vert_{\tilde{X}_{a,x}}) \cong \bigoplus_{w \in W^{P_a} , \ell(w)\leq b} H_{c}
^{c} (\tilde{X}_{a,x},   \V_{M_{a,h} ,
\tilde{\lambda}_w}/ \varpi) \otimes V_w^{b-\ell(w),a}
\end{equation}

\begin{definition}
We say $\tilde{\lambda} \in (\Z_+^{2n})^{\Hom(F^+,E)}$ is \textbf{mildly regular} if for any $\bar{\tau} \in \Hom(F^+,E)$ 
$$(\tilde{\lambda}_{\bar{\tau},2},\cdots,\tilde{\lambda}_{\bar{\tau},2n-1}) \in \Z_+^{2n-2}$$
is a regular weight.
\end{definition}

\begin{theorem} \label{1-dim}
Assume $\tilde{\lambda}$ is mildly regular and $|\tilde{\lambda}|_{comp}<p$. then any irreducible Galois sub-representation $W$ 
of $H_{\partial}^j (\tilde{X}_{\tilde{K}}, \V_{\tilde{\lambda}}/ \varpi)$ is one dimensional for all $j<d$.
\end{theorem}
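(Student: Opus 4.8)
The plan is to argue by induction on the rank. For $0\le m\le n$, any neat level, and the associated $\res_{F^+/\Q}U(m,m)$-Shimura variety $\tilde Y$, let $P(m)$ be the statement: for every mildly regular weight $\tilde\mu$ which is sufficiently small relative to $p$, every Jordan--H\"older constituent of the $G_{F_0}$-module $H^{j}_\partial(\tilde Y,\V_{\tilde\mu}/\varpi)$ is one-dimensional for all $j<d$. An irreducible sub-representation of a finite-length module is in particular a Jordan--H\"older constituent, so Theorem \ref{1-dim} is exactly the case $m=n$ of $P(m)$. Observe that, since the full un-localized boundary cohomology is in play, the Caraiani--Scholze vanishing (Theorem \ref{cs}) is not available; the only Galois-theoretic input will be that the locally symmetric spaces for $\res_{\O_F/\Z}\GL_{n-a}$ carry no Galois action, which is why mild regularity, not a non-Eisenstein hypothesis, is the relevant condition.

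The base case $m=0$ is vacuous. Assume $m\ge1$ and $P(m')$ for all $m'<m$. The boundary $\partial\tilde Y^{*}$ of the minimal compactification is stratified by the Shimura varieties $\tilde Y_{a,\bar x}$ of the groups $\res_{F^+/\Q}U(a,a)$, $0\le a\le m-1$; the $G_{F_0}$-equivariant stratification spectral sequences \ref{sp1} and \ref{sp2}, together with Pink's computation of $R^{b}j_\ast\V_{\tilde\mu}/\varpi$ on each stratum and the Polo--Tilouine branching isomorphism \ref{dec}, show that every Jordan--H\"older constituent of $H^{j}_\partial(\tilde Y,\V_{\tilde\mu}/\varpi)$ with $j<d$ is a Jordan--H\"older constituent of $H^{c}_c(\tilde Y_{a,x},\V_{M_{a,h},\tilde\mu_w}/\varpi)$ for some $0\le a\le m-1$, $w\in W^{P_a}$ and $c\le j<d$; the auxiliary $\F_p$-modules $V^{b-\ell(w),a}_w$, coming from the locally symmetric space of $M_{a,l}=\res_{\O_F/\Z}\GL_{n-a}$, have trivial $G_{F_0}$-action and contribute no new constituents. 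So it suffices to bound the constituents of these strata contributions.

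For $a=0$, the stratum $\tilde Y_{0,x}$ is $0$-dimensional, $H^{c}_c(\tilde Y_{0,x},\V_{M_{0,h},\tilde\mu_w}/\varpi)$ is concentrated in degree $c=0$ and is a successive extension of Tate twists of the trivial representation, so its constituents are one-dimensional. For $1\le a\le m-1$, $\tilde Y_{a,x}$ is a $\res_{F^+/\Q}U(a,a)$-Shimura variety of complex dimension $a^{2}d\ge d$, and $\tilde\mu_w$ is a regular weight of $M_{a,h}$, still sufficiently small relative to $p$ (see below); hence Theorem \ref{ls} gives $H^{c}(\tilde Y_{a,x},\V_{M_{a,h},\tilde\mu_w}/\varpi)=0$ for $c<a^{2}d$, and as $c<d\le a^{2}d$ the long exact sequence of the Borel--Serre boundary of $\tilde Y_{a,x}$ yields a $G_{F_0}$-equivariant isomorphism $H^{c}_c(\tilde Y_{a,x},\V_{M_{a,h},\tilde\mu_w}/\varpi)\cong H^{c-1}_\partial(\tilde Y_{a,x},\V_{M_{a,h},\tilde\mu_w}/\varpi)$ (both sides being $0$ when $c=0$). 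A regular weight is a fortiori mildly regular and $c-1<d$, so $P(a)$ applies; its constituents, hence those of $H^{c}_c(\tilde Y_{a,x},\V_{M_{a,h},\tilde\mu_w}/\varpi)$, are all one-dimensional. This proves $P(m)$, completing the induction.

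I expect the main obstacle to be the verification deferred above: that mild regularity of $\tilde\mu$ forces each twisted weight $\tilde\mu_w$ occurring on a boundary stratum ($a\le m-1$, $w\in W^{P_a}$) to restrict to a genuinely regular weight of $M_{a,h}$ whose comparison size still lies in the Fontaine--Laffaille range, so that Theorem \ref{ls} and the Polo--Tilouine branching of \ref{dec} apply at every level of the recursion. This rests on the fact that the Siegel Levi $U(a,a)$ with $a\le n-1$ occupies the "middle" $2a$ of the $2n$ coordinates of $\tilde\mu$ — exactly the block that mild regularity keeps strictly decreasing — together with the observation that passing to a minimal-length Kostant representative and performing the $\rho$-shift in $\tilde\mu_w=w(\tilde\mu+\rho)-\rho$ preserves this genericity and changes the entries only by a bounded amount, the size condition then being absorbed by taking $p$ large relative to $\tilde\mu$. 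The remaining inputs — $G_{F_0}$-equivariance of the stratification spectral sequences, the identification of the strata with $U(a,a)$-Shimura varieties over $F_0$, and the triviality of Galois on the $\GL_{n-a}$-factors — are the content of Pink's theorem as recalled before the statement.
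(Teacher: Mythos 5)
Your proposal is essentially the paper's own argument: induction on the rank of the unitary group, the stratification spectral sequences \ref{sp1} and \ref{sp2} together with Pink's theorem and the branching isomorphism \ref{dec} to reduce to the strata $H^{c}_c(\tilde{X}_{a,\bar{x}},\V_{M_{a,h},\tilde{\lambda}_w}/\varpi)$, then Lan--Suh vanishing (Theorem \ref{ls}) plus the Borel--Serre long exact sequence to pass to $H^{c-1}_\partial$ of a lower-rank $U(a,a)$-Shimura variety and invoke the induction hypothesis (the paper starts the induction at $n=1$, where the boundary strata are points, rather than at a vacuous $m=0$, but this is cosmetic). The one step you defer --- that mild regularity forces $\tilde{\lambda}_w$ restricted to the Siegel Levi $M_{a,h}$ to be regular with comparison size still below $p$ --- is verified in the paper exactly along the lines you sketch (the middle block of coordinates plus the estimate $|\tilde{\lambda}'|_{comp}\leq|\tilde{\lambda}|_{comp}$, so no extra largeness of $p$ is needed), so your proposal is correct and matches the paper's proof.
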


\begin{proof}
We argue by induction on $n$,
 Assume that $n=1$. Since the boundary of the minimal compactification of a $U(1,1)$-Shimura variety is only 
a disjoint union of points, the boundary cohomology is the cohomology
of a zero dimensional variety and therefore is a sum of characters as Galois module.
  
The induction hypothesis being as follows:

Hypothesis: the theorem holds for all integers $k<n$.
 
 Since the spectral sequences \ref{sp1} and \ref{sp2} degenerate at $E_2$, its irreducible constituents provide the irreducible constituents of its abutment.

Let $j\in[0,d-1]$. We want to compute the boundary cohomology  $H_{\partial}^j (\tilde{X}_{\tilde{K}}, \V_{\tilde{\lambda}}/ \varpi)$.

By the spectral sequence \ref{sp1}, it is enough to check that the semisimplification of the $G_{F_0}$-modules
$H_c^j (\tilde{X}_{k,\bar{x}}, Rj_\ast\V_{\tilde{\lambda}}/ \varpi\vert_{\tilde{X}_{k,\bar{x}}})$
for the strata $\tilde{X}_{k,\bar{x}}$
for $k< n$ is a direct sum of characters. For this purpose, by the spectral sequence \ref{sp2}, it is enough to show for all $k< n$ that for every pair $(b,c)$ such that $j=b+c$, with $b+c<d$ , 
the semisimplification of $H_{c}^{c} (\tilde{X}_{k,\bar{x}},  
R^b j_{\ast} \V_{\tilde{\lambda}}/ \varpi 
\vert_{\tilde{X}_{k,\bar{x}}})$ is a direct sum of characters.

By \ref{dec}, it suffices to show that  for every $c<d$ and $k=0,\ldots,n-1$, the Galois module $H_{c}^{c} (\tilde{X}_{k,\bar{x}},   \V_{M_{k,h} ,\tilde{\lambda}_{w}}/ \varpi)$ 
for all $\bar{x}$ and $w \in W^{P_k}$,
is a direct sum of characters.
 Since the $p$-component $x_p$ of the representative $x$ is equal to $1$ by our choice, the level of the Shimura variety $\tilde{X}_{k,\bar{x}}$ at $p$ is maximal hyperspecial. 

The
local system in $\O$-modules $\V_{M_{k,h} ,\tilde{\lambda}_{w}}$ corresponds to
the representation $\V_{M_{k,h} ,\tilde{\lambda}_{w}}$ of $M_{k,h}$ which is an irreducible representation of highest weight $\tilde{\lambda}'=\tilde{\lambda}_{w,h}$.
Indeed, $\tilde{\lambda}^\prime$ is dominant since $w\in W^{P_k}$. The irreducibility follows from the condition $|\tilde{\lambda}'|_{comp} <p$.
To verify this, recall that the assumption $|\tilde{\lambda}|_{comp}<p$
guaranties that $|\tilde{\lambda}'|_{comp} <p$ because:

$$ |\tilde{\lambda}'|_{comp} = d k^2 + |\tilde{\lambda}'| \leq d k^2 + |\tilde{\lambda}_w| \leq d k^2 + |\tilde{\lambda}| + |w\rho -\rho| \leq d k^2 + |\tilde{\lambda}| $$
$$+ d(n-k)(n+k-1) 
=  |\tilde{\lambda}| + d n^2 - d (n-k) 
\leq |\tilde{\lambda}| + d n^2 = |\tilde{\lambda}|_{comp}$$ 
Note moreover that $\tilde{\lambda}^\prime$ is regular. Indeed if 
$$\tilde{\lambda}_w = w(\tilde{\lambda} + \rho) - \rho = (\tilde{\lambda}_{w,\bar{\tau},1},\cdots,\tilde{\lambda}_{w,\bar{\tau},2n})_{\bar{\tau}\in \Hom(F^+,E)}$$

then $\tilde{\lambda}^\prime$ is equal to 
$$\tilde{\lambda}^\prime = (\tilde{\lambda}_{w,\bar{\tau},k+1},\tilde{\lambda}_{w,\bar{\tau},k+2},\cdots,\tilde{\lambda}_{w,\bar{\tau},n-k})_{\bar{\tau}\in \Hom(F^+,E)}$$
 since $\tilde{\lambda}$ is mildly regular, $\tilde{\lambda}^\prime$ is regular.

Therefore, Theorem \ref{ls} implies that for any $r<d$, we have
$H_{c}^{r} (\tilde{X}_{k,\bar{x}},   \V_{M_{k,h} ,\tilde{\lambda}_{w}}/ \varpi) \cong H_{\partial}^{r-1} (\tilde{X}_{k,\bar{x}},   \V_{M_{k,h} ,\tilde{\lambda}_{w}}/ \varpi)$. 
By the induction assumption, since $k<n$, the semi-simplification $H_{\partial}^{r-1} (\tilde{X}_{k,\bar{x}},   \V_{M_{k,h} ,\tilde{\lambda}_{w}}/ \varpi)$ is a direct sum of some characters.
\end{proof}

\begin{remark}
The semisimplification is needed in this theorem because of the spectral sequences \ref{sp1} and \ref{sp2} despite the fact 
that there is no need of semisimplification in formula \ref{dec}. 
\end{remark}

\section{Eichler-Shimura relations}

We first recall a result by Wedhorn \cite{wed} establishing Eichler-Shimura relations for the group $\widetilde{G}$ relevant to our purpose 
(He proves the Eichlerâ€“Shimura relation 
in the more general case of PEL
Shimura varieties at places of good reduction at which the group is split).

\subsection{Tensor induction}
In this subsection we recall the notion of tensor induction as defined in Section $1$ of Yoshida \cite{yos}.

Let $\rho_0: G_F \to \GL(V_0)$ be a representation.	We fix a decomposition $G_{F_0} = \coprod_{i=1}^d g_i G_F$ where $g_1,\cdots,g_d \in G_{F_0}$ are all equivalence classes of $G_{F_0}/G_F$. Denote $V := \otimes_{i=1}^d V_i$ where all $V_i$ is isomorphic to $V_0$. With this choice of representatives $(g_i)$'s, we define a 
map $\rho: G_{F_0} \to \GL(V)$ by:

$$\rho(h)(\bigotimes_{i=1}^{d} v_i) = \bigotimes_{i=1}^d \rho_0 (g_i^{-1}h g_{h,i})(v_{h,i})$$

where $g_{h,i} = g_k $ for some $k = 0, \cdots ,d$ such that $h^{-1}g_i \in g_{h,i} G_F$ and $v_{h,i}=v_k$. The map $\rho$ is a group homomorphism.
 If we choose another set of representatives $(g_i^\prime)$ the resulting homomorphism is equivalent to $\rho$. We denote by $\otimes \Ind_{F}^{F_0} \rho_0$
 any representation $\rho$ as above. We call it a tensor induction of $\rho_0$ from $G_F$ to $G_{F_0}$.

\subsection{A theorem of Wedhorn}

Let $v_0$ be an arbitrary finite place of $F_0$, denote $H_{v_0}(x)$ for the characteristic polynomial 
of $[(\otimes \Ind_{F}^{F_0}) ((\wedge^n \bar{\rho}_{\tilde{\m}}^{\vee}) \otimes \rho_\psi^\vee (n(n+1)/2 -2n^2)](\Frob_{v_0})$ 
where $\psi$ is central character of $\rho_{\tilde{\m}}$ (this is the Hecke polynomial for $U(n,n)/F^+$, see \cite[A5.7]{nek}).

\begin{theorem}[Wedhorn] \label{ES}
For any finite place $v_0$ of $F_0$, the endomorphism 
 $H_{v_0}(\Frob_{v_0})$ annihilates 
$ H_{(c)}^i (\tilde{X}_{\tilde{K}}, \V_{\tilde{\lambda}}/ \varpi)[\tilde{\m}]$.
\end{theorem}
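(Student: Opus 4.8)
The plan is to obtain the statement from Wedhorn's congruence relation by two successive reductions, first modulo $\varpi$ and then modulo $\tilde{\m}$. First I would invoke \cite{wed} in the following shape: for $v_0$ a finite place of $F_0$ at which $\tilde{X}_{\tilde{K}}$ has good reduction, at which $\underline{\tilde{G}}$ is split, and at which $\bar{\rho}_{\tilde{\m}}$ (equivalently $T$) is unramified --- so in particular $v_0\nmid p$; these are the only places needed in the sequel --- there is a \emph{Hecke polynomial} $\mathcal{H}_{v_0}(X)\in\tilde{\T}^S[X]$: a monic polynomial whose coefficients are images of Satake-basis elements of the spherical Hecke algebra at $v_0$, prescribed by the minuscule coweight of the Shimura datum of $\tilde{X}_{\tilde{K}}$, i.e.\ the Hecke polynomial for $U(n,n)/F^+$ of \cite[A5.7]{nek}. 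Wedhorn's theorem asserts that $\mathcal{H}_{v_0}(\Frob_{v_0})$ kills the \'etale cohomology of $\tilde{X}_{\tilde{K}}\times_{F_0}\overline{F_0}$ with coefficients in $\V_{\tilde{\lambda}}$. Since $\underline{\tilde{G}}$ is isotropic, $\tilde{X}_{\tilde{K}}$ is not proper, so I would run the argument on a smooth toroidal compactification $\X_{\tilde{K}}^{tor}$, to which both the Frobenius correspondence at $v_0$ and the prime-to-$v_0$ Hecke correspondences extend; this gives the relation on $H^i$, on $H^i_c$, and hence (via the long exact sequence of the boundary) on $H^i_\partial$. Because Wedhorn's relation is of geometric origin --- an identity of algebraic correspondences on the special fibre at $v_0$, valid away from the primes of $\underline{S}$ --- it descends to the mod-$\varpi$ cohomology, so $\mathcal{H}_{v_0}(\Frob_{v_0})=0$ on $H_{(c)}^i(\tilde{X}_{\tilde{K}},\V_{\tilde{\lambda}}/\varpi)$ after the Betti--\'etale comparison recalled in Section 3.

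Next I would pass to the submodule $H_{(c)}^i(\tilde{X}_{\tilde{K}},\V_{\tilde{\lambda}}/\varpi)[\tilde{\m}]$. It is stable under $\Frob_{v_0}$ and under the Hecke operators at $v_0$, and on it every element of $\tilde{\T}^S$ acts through the residue field $\tilde{\T}^S/\tilde{\m}$; hence $\mathcal{H}_{v_0}(\Frob_{v_0})$ acts there as $\overline{\mathcal{H}}_{v_0}(\Frob_{v_0})$, where $\overline{\mathcal{H}}_{v_0}\in(\tilde{\T}^S/\tilde{\m})[X]$ is the reduction of $\mathcal{H}_{v_0}$ modulo $\tilde{\m}$, and therefore $\overline{\mathcal{H}}_{v_0}(\Frob_{v_0})=0$ on $H_{(c)}^i(\tilde{X}_{\tilde{K}},\V_{\tilde{\lambda}}/\varpi)[\tilde{\m}]$. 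The remaining task is to identify $\overline{\mathcal{H}}_{v_0}$ with the characteristic polynomial $H_{v_0}$ of $T(\Frob_{v_0})$, where $T=(\otimes\Ind_F^{F_0})((\wedge^n\bar{\rho}_{\tilde{\m}}^\vee)\otimes\bar{\rho}_\psi^\vee(n(n+1)/2-2n^2))$. This is a formal manipulation of Satake parameters: by local--global compatibility at $v_0$ the parameters of $\tilde{\m}$ at $v_0$ are the eigenvalues of $\bar{\rho}_{\tilde{\m}}(\Frob_{v_0})$; the $U(n,n)/F^+$ Hecke polynomial, read in those parameters through the standard minuscule representation of the dual group of $\res_{F^+/\Q}U(n,n)$, is by construction the characteristic polynomial of the associated Frobenius class in that representation; and unwinding the Weil-restriction bookkeeping (descent from $F^+$ to $F_0$ yielding $\otimes\Ind_F^{F_0}$, the wedge power $\wedge^n$, the twist by the central character $\bar{\rho}_\psi$, the normalising Tate twist) together with $\bar{\rho}_{\tilde{\m}}\cong\bar{\rho}_\m\oplus\bar{\rho}_\m^{c,\vee}\otimes\bar{\varepsilon}^{1-2n}$ identifies it with $H_{v_0}$. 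Combining the two, $H_{v_0}(\Frob_{v_0})=0$ on $H_{(c)}^i(\tilde{X}_{\tilde{K}},\V_{\tilde{\lambda}}/\varpi)[\tilde{\m}]$, as claimed.

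The hard part will be this last identification --- matching Wedhorn's abstract minuscule Hecke polynomial with the explicit characteristic polynomial of the twisted tensor induction $T$. It requires the precise combinatorics of the $U(n,n)$ Hecke polynomial (dual group, standard minuscule representation, Satake isomorphism, cf.\ \cite{nek}) and, above all, careful tracking of every normalising twist and of the passage between the $F^+$-form and its $F_0$-descent. By contrast the two remaining points --- that Wedhorn's congruence relation may be run with $\O/\varpi$-coefficients on the toroidal compactification, and that it is compatible with localisation at $\tilde{\m}$ --- are routine: the first because Wedhorn's construction is geometric and integral away from $\underline{S}$, the second because the Hecke operators at $v_0$ commute with $\tilde{\T}^S$ and preserve the boundary stratification.
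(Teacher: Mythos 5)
Your overall strategy is the same as the paper's: quote Wedhorn's congruence relation at the places where it applies, reduce modulo $\varpi$ and then modulo $\tilde{\m}$, and identify the reduced Hecke polynomial with the characteristic polynomial of $T(\Frob_{v_0})$ via the explicit $U(n,n)$ Hecke polynomial of \cite[A5.7]{nek} and the decomposition $\bar{\rho}_{\tilde{\m}} \cong \bar{\rho}_\m \oplus \bar{\rho}_{\m}^{c,\vee}\otimes\bar{\epsilon}^{1-2n}$. The paper in fact gives no more detail than this for those steps, so that part of your write-up is consistent with (indeed more explicit than) the text.

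The genuine gap is the restriction to places $v_0$ of good reduction at which the group is split (equivalently, lying above rational primes totally split in $F$), together with your claim that ``these are the only places needed in the sequel.'' Both are at odds with the statement and with how it is used. The theorem asserts the annihilation for \emph{every} finite place $v_0$ of $F_0$, and this generality is not decorative: in Lemma \ref{fac} and Lemma \ref{1} the relation is used in the form $\mathrm{char}_{\bar{\theta}(g)}\bigl(\bar{\rho}_\partial(g)\bigr)=0$ for \emph{every} $g\in G_{F'}$, and is then evaluated at arbitrary matrices $(M_i')_{i\in I}$ in the image group $H(\F_q)$ --- elements which are certainly not all Frobenii at split places of good reduction. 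The paper's own proof supplies exactly the missing step you omit: Wedhorn gives the relation at the dense (by Chebotarev) set of Frobenii over primes totally split in $F$; since $H_{(c)}^i(\tilde{X}_{\tilde{K}},\V_{\tilde{\lambda}}/\varpi)[\tilde{\m}]$ is a finite module and $T$ has finite image, the function $g\mapsto \mathrm{char}_{T(g)}(g)$ acting on this module is continuous (locally constant), so vanishing on that dense set of Frobenius elements forces vanishing for all $g$, in particular at $\Frob_{v_0}$ for every finite place $v_0$ and at every $g\in G_{F'}$ as needed later. Without this Chebotarev/continuity argument your proof establishes only a strictly weaker statement, and the subsequent sections would not go through as written.
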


\textbf{Comment}:  the proof requires that the finite place $v_0$ of $F_0$ is above a rational prime $\ell$ which is totally split in $F$. However, 
Chebotarev density theorem implies the result for any finite place $v_0$ of $F_0$.
\\

Therefore, the long exact sequence \ref{exact} implies that  $H_{v_0}(\Frob_{v_0})$ annihilates 
$\bar{\rho}_{\partial}:= H_{\partial}^i (\tilde{X}_{\tilde{K}}, \V_{\tilde{\lambda}}/ \varpi)[\tilde{\m}]$.

From now, we fix $n=2$. We can therefore write $\lambda = (m_\tau , n_\tau)_{\tau \in \Hom(F,E)}$. We make the following assumption (PR)
 of purity and partial regularity:

\begin{itemize}
\item[(PR)] For all 
$\tau \in \Hom(F,E)$,
we have 
$m_\tau = m_{\tau^c}$, $n_\tau = n_{\tau^c}$ and $0> m_\tau$.
\end{itemize} 
The assumption (PR) implies that the central character $\psi$ of $\rho_{\tilde{\m}}$ is trivial.

For the remaining of this section, we will study $ \bar{\theta}:=(\otimes\Ind_{F}^{F_0}) (\wedge^2 \bar{\rho}_{\tilde{\m}}^{\vee} (-5))]$.
 By the definition of $\tilde{\m}$, we have an isomorphism between Galois representations 
$\bar{\rho}_{\tilde{\m}} \cong \bar{\rho}_{\m} \oplus \bar{\rho}_{\m}^{c,\vee}(-3)$.
Hence, $\wedge^2 (\bar{\rho}_{\tilde{\m}}^{\vee}) 
$ is isomorphic to $(\wedge^2\bar{\rho}_{\m}
^{\vee}) \oplus (\wedge^2 \bar{\rho}_{\m}^{c})(6) 
\oplus (\bar{\rho}_{\m}^{\vee} \otimes \bar{\rho}
_{\m}^{c})(3) $. Since $\bar{\rho}_{\m}^{c} \cong 
\bar{\rho}_{\m}^{\vee} \otimes \det \bar{\rho}
_{\m}$ therefore, $\wedge^2 (\bar{\rho}
_{\tilde{\m}}^{\vee}) $ is isomorphic to:
 
 \begin{equation} \label{theta}  
(\wedge^2\bar{\rho}_{\m}^{\vee}) \oplus (\wedge^2 
\bar{\rho}_{\m}^{c})(6) \oplus     \epsilon^3 \oplus (\Sym^2 
\bar{\rho}_{\m}^{\vee}) \otimes (\det \bar{\rho}_{\m})(3)
\end{equation}

\subsection{Large image conditions}

For the rest of this paper this paper we assume that:

\begin{itemize}
\item[(LI $\bar{\rho}_{\m}$)] there exists a power $q$ of $p$ such that $
\SL_2 (\F_q) \subset \im (\bar{\rho}_{\m}) \subset \kappa^{\times} 
\GL_2(\F_q)$.

\end{itemize}

\begin{remark}
The condition (LI $\bar{\rho}_{\m}$) implies that $\bar{\rho}_{\m}$ is decomposed generic. (for example see \cite[Lemma 2.3]{AN})
\end{remark}

Let $\hat{F}$ be normal closure of $F$ in $\bar{\Q}$. Denote $pr: \PGL_2(\bar{\F}_p) \to \GL_2(\bar{\F_p})$ for the projection map. 
Let $\mathcal{D} = \det \bar{\rho}_\m (G_{\hat{F}})$. 

\begin{lemma} \label{image}
Assume (LI $\bar{\rho}_\m$) and $p > w$. then either

$$\bar{\rho}_\m(G_{\hat{F}}) = \GL_2(\F_q)^\mathcal{D} := \{ A \in \GL_2(\F_q) \vert \det (A) \in \mathcal{D} \}, \text{ or}$$

$$  \bar{\rho}_\m(G_{\hat{F}}) = (\F_{q^2}^\times \GL_2(\F_q))^\mathcal{D} = \{ A \in \F_{q^2}^\times \GL_2(\F_q) \vert \det (A) \in \mathcal{D} \}$$
\end{lemma}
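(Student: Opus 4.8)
The plan is to exploit the hypothesis (LI $\bar\rho_\m$), which says that $\SL_2(\F_q) \subset \im(\bar\rho_\m) \subset \kappa^\times\GL_2(\F_q)$ for some $p$-power $q$, together with the fact that $G_{\hat F}$ is a normal subgroup of $G_{F_0}$ (indeed of $G_\Q$) of finite index, so that $\bar\rho_\m(G_{\hat F})$ is a normal subgroup of $\bar\rho_\m(G_F)$. First I would record the group-theoretic input: $\im(\bar\rho_\m)$ sits between $\SL_2(\F_q)$ and $\kappa^\times\GL_2(\F_q)$, and by projecting to $\PGL_2$ its image contains $\PSL_2(\F_q)$, a nonabelian simple group (here $p > w \geq 5$-ish forces $q \geq 4$, so simplicity holds; I would check that the numerical hypotheses in the Main Theorem, in particular $p$ large, guarantee $q \neq 2,3$). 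Since $\bar\rho_\m(G_{\hat F}) \trianglelefteq \bar\rho_\m(G_F)$ and the latter has image containing $\PSL_2(\F_q)$ in $\PGL_2$, I would argue that the image of $\bar\rho_\m(G_{\hat F})$ in $\PGL_2(\bar\F_p)$ is a normal subgroup of a group sandwiched between $\PSL_2(\F_q)$ and $\mathrm{P}(\kappa^\times\GL_2(\F_q))$. The normal subgroups of such an almost-simple group are controlled: the image is either trivial, or contains $\PSL_2(\F_q)$. The key point to rule out the trivial (abelian) case is that $\det\bar\rho_\m$ has order prime to $p$ while $\SL_2(\F_q)$ has order divisible by $p$, combined with the fact that $[\hat F : F]$ and $[G_{F_0}:G_F]$ are prime to $p$ for $p$ large; hence $\bar\rho_\m(G_{\hat F})$ cannot be abelian, as its image in $\PGL_2$ would then be a normal abelian subgroup, forcing it to be trivial and $\bar\rho_\m(G_{\hat F})$ to be central, contradicting that a bounded-index subgroup of a group containing $\SL_2(\F_q)$ still has order divisible by $p$.

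Next, knowing that the projective image of $\bar\rho_\m(G_{\hat F})$ contains $\PSL_2(\F_q)$, I would lift this back to $\GL_2$. The preimage in $\im(\bar\rho_\m)$ of $\PSL_2(\F_q) \subset \PGL_2$ is $\{A : \det A \in (\F_q^\times)^2\}\cdot(\text{scalars})$, and intersecting with the constraint that $\bar\rho_\m(G_{\hat F})$ lies inside $\kappa^\times\GL_2(\F_q)$ and has determinant exactly $\mathcal D = \det\bar\rho_\m(G_{\hat F})$, one sees $\bar\rho_\m(G_{\hat F})$ must be exactly $\{A \in \F_{q^r}^\times\GL_2(\F_q) : \det A \in \mathcal D\}$ for $r \in \{1,2\}$ — the two cases correspond precisely to whether the scalars appearing are already in $\F_q^\times$ or require a quadratic extension $\F_{q^2}$ (the degree is at most $2$ because $\SL_2(\F_q)$ is contained in the image, so the scalar part is $\bar\rho_\m(G_{\hat F})\cdot\SL_2(\F_q)/\SL_2(\F_q)$, an abelian quotient that, by the sandwich $\SL_2(\F_q)\subset\im\subset\kappa^\times\GL_2(\F_q)$, injects into $\kappa^\times/\F_q^\times$, but the determinant condition pins the relevant scalars to lie in $\F_{q^2}^\times$). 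This yields exactly the two displayed alternatives $\GL_2(\F_q)^{\mathcal D}$ and $(\F_{q^2}^\times\GL_2(\F_q))^{\mathcal D}$.

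I expect the main obstacle to be the bookkeeping in the second paragraph: showing the scalar extension has degree \emph{at most} $2$ and identifying the determinant set cleanly as $\mathcal D$ rather than some a priori larger or smaller set. The subtlety is that $\im(\bar\rho_\m)$ may involve scalars in $\kappa^\times$ far outside $\F_q^\times$, yet once we restrict to $G_{\hat F}$ and impose that the determinant lands in $\mathcal D = \det\bar\rho_\m(G_{\hat F})$, the scalar ambiguity collapses; making this precise requires carefully tracking how $\det$ interacts with the scalar part, using that $\det(cA) = c^2\det A$ for $c$ scalar, so that the scalars contributing to $\bar\rho_\m(G_{\hat F})$ modulo $\F_q^\times$ are $2$-torsion-controlled, hence live in $\F_{q^2}^\times$. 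The hypothesis $p > w$ enters to ensure $\mathcal D$ (a subgroup of $\kappa^\times$ related to powers of the cyclotomic character, whose relevant exponents are bounded by $w$) is large enough — in particular nontrivial and compatible with the determinant of the full image — so that no degenerate small-$q$ or small-$\mathcal D$ exception arises. The rest is a routine application of the classification of subgroups of $\PGL_2$ over a finite field (Dickson's theorem) restricted to the normal-subgroup situation.
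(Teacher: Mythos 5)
Your skeleton agrees with the paper's: $\bar{\rho}_\m(G_{\hat{F}})$ is normal in $\bar{\rho}_\m(G_F)$, the projective image of the latter lies between $\PSL_2(\F_q)$ and $\PGL_2(\F_q)$, and a nontrivial normal subgroup of such a group must contain the simple group $\PSL_2(\F_q)$. But two steps need attention. The first is your non-triviality argument, which is genuinely different from the paper's and uses an input the lemma does not grant: the index/order count needs $p\nmid[\hat{F}:F]$, i.e.\ in effect $p>2d$ (every prime divisor of $[\hat{F}:F]$ is at most $2d$ since $\Gal(\hat{F}/\Q)$ embeds in the symmetric group on the $2d$ embeddings of $F$). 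The lemma only assumes (LI) and $p>w$, which gives no lower bound on $p$ in terms of $d$. The paper instead uses that $p$ is unramified in $F$, hence $\hat{F}$, so the inertia group at $v\mid p$ lands in $G_{\hat{F}}$, and Theorem \ref{gal} together with $p>w$ shows $\bar{\rho}_\m(I_v)$ is non-scalar; this needs no largeness of $p$ relative to $d$. Your route does work where the lemma is actually applied (after the (PR) reduction $w<0$, so the Main Theorem's bound gives $p>2d$), but if you use it you must state explicitly that you are importing that inequality, since $p>w$ alone does not supply it.

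The more serious gap is in your second paragraph: you never establish $\SL_2(\F_q)\subset\bar{\rho}_\m(G_{\hat{F}})$, which is the crux. Your justification "because $\SL_2(\F_q)$ is contained in the image" is true for $\im(\bar{\rho}_\m)=\bar{\rho}_\m(G_F)$ by (LI), but what is needed is the containment for the subgroup $\bar{\rho}_\m(G_{\hat{F}})$, and knowing only that its projective image contains $\PSL_2(\F_q)$ and that its determinant image is $\mathcal{D}$ does not by itself force it to be of the form $\{A\in\F_{q^r}^\times\GL_2(\F_q):\det A\in\mathcal{D}\}$. The paper isolates exactly this point as Lemma \ref{group}: since $\SL_2(\F_q)$ is perfect (for $q\geq4$) and $\pr(\SL_2(\F_q))=\PSL_2(\F_q)\subset\pr(\bar{\rho}_\m(G_{\hat{F}}))$, one gets $\SL_2(\F_q)=(\SL_2(\F_q)\cdot Z)^{der}\subset(\bar{\rho}_\m(G_{\hat{F}})\cdot Z)^{der}\subset\bar{\rho}_\m(G_{\hat{F}})$, where $Z$ denotes the scalars. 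Once that containment is in place, your scalar bookkeeping (writing elements as $cA$ with $A\in\GL_2(\F_q)$, and using $\det(cA)=c^2\det A$ to see the scalar is controlled by a square-root condition, hence lies in $\F_{q^2}^\times$) is the right mechanism for the two alternatives, and is in fact more explicit than what the paper writes; but as drafted ("injects into $\kappa^\times/\F_q^\times$", "one sees ... must be exactly") it asserts the equality rather than proving it. Add the perfectness lifting and make the determinant/scalar step precise, and the proof closes along the paper's lines.
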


\begin{proof}

By (LI $\bar{\rho}_\m$) the group $pr (\bar{\rho}_\m(G_{F}))$ 
is isomorphic to $\PGL_2(\F_q)$ or $\PSL_2(\F_q)$. The group 
$pr( \bar{\rho}_\m(G_{\hat{F}}))$ is normal subgroup of $pr( 
\bar{\rho}_\m(G_{F}))$. Since $p>w$ and $ pr( \bar{\rho}_
\m(I_v)) \subset pr( \bar{\rho}_\m(G_{\hat{F}}))$ then theorem \ref{gal} implies that $ 
pr(\bar{\rho}_\m(G_{\hat{F}}))$ is non-trivial. 

Since $\PSL_2(\F_q)$ is a simple group of index $2$ in the group $\PGL_2(\F_q)$, we have $\PSL_2(\F_q) \subset pr( \bar{\rho}_\m(G_{\hat{F}}))$. 
In the next lemma, we show that this implies $\SL_2(\F_q) \subset \bar{\rho}_\m(G_{\hat{F}})$. So we are done by the following chain of inclusions:

$$(\kappa^\times \GL_2(\F_q))^{\mathcal{D}} \subset \bar{\rho}_\m(G_{\hat{F}}) \subset  (\GL_2(\F_q))^{\mathcal{D}} $$

\end{proof}

\begin{lemma}\label{group}
Let $H$ be a group of center $Z$ and let $pr : H \to H/Z$ the canonical projection. Let $P$ and $Q$ be two subgroups of $H$ 
such that $pr(Q) \subset pr(P)$. Assume moreover
that Q has no non-trivial abelian quotients. Then $Q \subset P$.
\end{lemma}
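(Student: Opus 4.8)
The plan is to prove Lemma \ref{group} by a direct group-theoretic argument. Given the hypotheses, the natural object to consider is the subgroup $PZ \subset H$ generated by $P$ and the center $Z$; since $Z$ is central it is normalized by everything, so $PZ$ is indeed a subgroup and $pr(PZ) = pr(P)$. The hypothesis $pr(Q) \subset pr(P)$ then says exactly that $Q \subset pr^{-1}(pr(P)) = PZ$. So the whole problem reduces to showing that the inclusion $Q \subset PZ$ can be improved to $Q \subset P$, using the extra assumption that $Q$ has no nontrivial abelian quotients (equivalently, $Q$ equals its own derived subgroup, $Q = [Q,Q]$).

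First I would record the key consequence of that assumption: if $\phi : Q \to A$ is any homomorphism to an abelian group, then $\phi$ is trivial. Next, consider the composite $Q \hookrightarrow PZ \twoheadrightarrow PZ/P$. I claim $PZ/P$ is abelian: indeed $PZ/P \cong Z/(Z\cap P)$ by the second isomorphism theorem, and $Z$ is abelian (being the center of $H$), so this quotient is abelian. Therefore the composite $Q \to PZ/P$ is a homomorphism from $Q$ to an abelian group, hence trivial by the previous step. Triviality of $Q \to PZ/P$ means precisely that $Q$ lands in the kernel of $PZ \to PZ/P$, which is $P$. Hence $Q \subset P$, as desired.

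I do not anticipate a genuine obstacle here; the argument is short and each step is standard. The one point requiring a small amount of care is making sure $PZ$ is actually a subgroup — this is immediate because $Z \subset H$ is normal (it is central), so $PZ = ZP$ is a subgroup — and that $pr^{-1}(pr(P)) = PZ$, which is the standard description of the preimage of the image under a quotient by a subgroup of the kernel (here $Z = \ker pr$). Everything else is a two-line diagram chase through the isomorphism $PZ/P \cong Z/(Z \cap P)$ and the defining property $Q = [Q,Q]$.

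One remark worth including for the reader: the hypothesis cannot be weakened to "$Q$ is non-abelian" — one really needs that $Q$ has no nontrivial abelian \emph{quotient}, i.e. $Q$ is perfect, which in the application is supplied by $\SL_2(\F_q)$ (or $\PSL_2(\F_q)$) being perfect for $q > 3$. This is exactly the situation in Lemma \ref{image}, where $Q = \bar\rho_\m(G_{\hat F})$ is shown to contain $\SL_2(\F_q)$ and $P$ plays the role of a candidate larger subgroup, with $Z = \kappa^\times \cdot \mathrm{Id}$ the relevant center.
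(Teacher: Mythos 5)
Your proof is correct and is essentially the paper's own argument in a slightly different guise: the paper writes $Q=Q^{\mathrm{der}}\subset (PZ)^{\mathrm{der}}=P^{\mathrm{der}}\subset P$, while you pass to the abelian quotient $PZ/P\cong Z/(Z\cap P)$ and use that a group with no nontrivial abelian quotients maps trivially to it; both rest on the same two ingredients, namely $Q\subset PZ$ and the centrality of $Z$. The only step to make explicit in your version is that $P\trianglelefteq PZ$ (immediate, since conjugation by $pz$ with $z\in Z$ is conjugation by $p$), so that the quotient group $PZ/P$ and the second isomorphism theorem apply.
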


\begin{proof}
Since $Q$ has no non-trivial abelian quotients, $Q^{der}$ is equal to $\{ 1\}$ or $Q$. If $Q^{der} = \{ 1 \}$, then $Q$ is an abelian group
, contradicting the assumption. So $Q^{der}=Q$ and we have following a chain of reverse inclusions:

$$ P \supset P^{der} = (PZ)^{der} = (QZ)^{der} = Q^{der} = Q $$ 
\end{proof}

Let $\gamma \in \F_{q^2} - \F_q$ be such that $\gamma^2 \in \F_q$.
 Then:
  $$(\GL_2(\F_{q^2}))^{\mathcal{D}} =
  (\GL_2(\F_{q}))^{\mathcal{D}} \coprod (\gamma
  \GL_2(\F_{q}))^{\mathcal{D}}$$

Since $\pr(\Ind_{F}^{F_0} \bar{\rho}_\m(G_{\hat{F}})) \subset 
\prod_{\tau \in \Hom(F,E)}\PGL_2(\F_q)$ by using 
Goursat's lemma as in Lemma 5.2.2 of \cite{ribet} and 
the facts $\PSL_2(\F_q)$ is a simple group and $\text{Aut}(\PSL_2(\F_q)) = \PSL_2(\F_q) \rtimes \Gal(\F_q/\F_p)$, we see that there exists a partition  
$\Hom(F,E) = 
\coprod_{i\in I} J^{i}_F$
and for all $\tau \in 
J^{i}_F$ there exist an element
$\sigma_{i,\tau} \in \Gal(\F_q/\F_p)$ such that

$$\pr(\Phi(\SL_2(\F_q)^I)) \subset \pr(\Ind_{F}^{F_0} \bar{\rho}_\m(G_{\hat{F}}))\subset \pr(\Phi(\GL_2(\F_q)^I)),$$\label{serre}

Where $\Phi = (\Phi_i)_{i\in I} : \GL_2 (\F_q)^I \rightarrow 
\GL_2 (\F_q)^{\Hom(F,E)}$ is given by $(M_i)_{i\in 
 I} \mapsto (M^{\sigma_{i,\tau}}_i )_{i\in I,
 \tau \in J^{i}_F}$ ($\pr : \GL_2 \rightarrow \PGL_2$ is 
 projection map). 
 
%\begin{itemize} \item[(LI $\Ind_{F}^{F_0} \bar{\rho}_\m$)] Assume that  $(\tau \neq \tau'  \Rightarrow \sigma_{i,\tau} \neq \sigma_{i,\tau'} )$ for all $i \in I , \tau \in J^{i}_F$.\end{itemize}

Denote $J_{F,v} := \Hom(F_v,E)$. Let 

$$H(\F_q):= \{ (M_i)_{i \in I} \in \prod_{i\in I} \GL_2(\F_q) \vert \exists \delta \in \mathcal{D}, \forall i , \det M_i = \delta \} $$.
\begin{lemma} \label{inertia}
Assuming (LI $\bar{\rho}_\m$) holds and $p > 2w+2$, then:
$$
\Ind_{F}^{F_0} \rho_\m (I_\p) \subset \Phi (H(\F_q))$$
\end{lemma}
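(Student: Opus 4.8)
The plan is to trace how the inertia subgroup $I_\p$ sits inside the tensor induction, and to control the determinant using the Fontaine-Laffaille description of $\bar\rho_\m|_{I_v}$ from Theorem \ref{gal}. First I would unwind the definition: a choice of coset representatives $g_1,\dots,g_d$ for $G_{F_0}/G_F$ identifies $\Ind_F^{F_0}\bar\rho_\m(I_\p)$ with a subgroup of $\prod_{\tau}\GL_2(\F_q)$, where the factors are grouped according to the places of $F$ above $\p$ (these are exactly the $G_{F_0}$-orbits, or rather the $\langle I_\p\rangle$-orbits, on embeddings). On the factor attached to a place $v\mid\p$, the image of $I_\p$ is (a conjugate of) $\bar\rho_\m(I_v)$, which by Theorem \ref{gal} is upper-triangular with tame diagonal characters $\delta_1,\delta_2$ whose product is $\bar\varepsilon^{\,w+1}$; in particular each such block has determinant a power of $\bar\varepsilon$ restricted to $I_v$.

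The key point is then that the determinant of every block is \emph{the same} element $\delta$ of $\F_q^\times$: indeed $\det\bar\rho_\m(I_v) = \bar\varepsilon^{\,w+1}(I_v)$ and the cyclotomic character, being a character of $G_{F_0}$ (even of $G_\Q$), takes a value on $I_\p$ that is independent of the embedding/place — all the local cyclotomic characters $\bar\varepsilon|_{I_v}$ for $v\mid\p$ agree under the relevant identifications because $p$ splits in $F_0$ and we are looking at a single prime $\p$ of $F_0$. Hence the tuple $(\det M_i)_i$ is constant, equal to some $\delta\in\det\bar\rho_\m(G_{\hat F})=\mathcal D$ (it lies in $\mathcal D$ because $I_\p\subset G_{\hat F}$ after passing to the normal closure, or because $\det\bar\rho_\m(I_\p)\subset \det\bar\rho_\m(G_{\hat F})$ directly). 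This places $\Ind_F^{F_0}\bar\rho_\m(I_\p)$ inside $H(\F_q)$ up to the twisting by Frobenius automorphisms $\sigma_{i,\tau}$ recorded in the discussion around \eqref{serre}; applying $\Phi$ on the nose, and noting that the Frobenius twists do not change the determinant (Frobenius fixes $\F_q$, hence fixes $\F_q^\times$), gives $\Ind_F^{F_0}\bar\rho_\m(I_\p)\subset \Phi(H(\F_q))$.

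I expect the main obstacle to be bookkeeping rather than a deep idea: one must check carefully that the chosen coset representatives $g_i$ can be taken so that the $p$-component of each $g_i$ is trivial (as was arranged for the boundary strata), so that conjugating $I_\p$ by $g_i^{-1}$ lands one genuinely inside a decomposition group $G_{F_v}$ for a place $v\mid\p$ and one may invoke Theorem \ref{gal} verbatim; and one must verify that $\det$ of the Fontaine-Laffaille diagonal is exactly $\bar\varepsilon^{\,w+1}$ rather than some other power — this is where the hypothesis $p>2w+2$ enters, guaranteeing via $(FL)$ and the bound $p>w$ that the relevant weights are in the Fontaine-Laffaille range and that $\bar\varepsilon^{\,w+1}$ is nontrivial on $I_\p$, so the determinant genuinely varies over a cyclic subgroup of $\F_q^\times$ of the expected order. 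Once the "same determinant on every block" statement is pinned down, the inclusion into $\Phi(H(\F_q))$ is immediate from the definition of $H(\F_q)$ and of $\Phi$.
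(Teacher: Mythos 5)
The determinant half of your argument is sound and is essentially the paper's (short) concluding step: for $y\in I_\p$ the $i$-th block of the induction is $\bar{\rho}_\m(g_i^{-1}yg_i)$, where $g_i^{-1}yg_i$ lies in the inertia group of a place $v\mid p$ of $F$ (here one uses that $p$ is unramified in $F$, indeed in $\hat{F}$, so that $I_\p\subset G_{\hat{F}}\subset G_F$ and conjugation by $g_i$ stays inside inertia); by Theorem \ref{gal} its determinant is $\bar{\varepsilon}^{\,w+1}(g_i^{-1}yg_i)=\bar{\varepsilon}^{\,w+1}(y)$, the same value for every block, and this value lies in $\mathcal{D}=\det\bar{\rho}_\m(G_{\hat{F}})$ since $y\in G_{\hat{F}}$.

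However, there is a genuine gap before that: you assert at the outset that the coset decomposition ``identifies $\Ind_F^{F_0}\bar{\rho}_\m(I_\p)$ with a subgroup of $\prod_\tau\GL_2(\F_q)$'', i.e.\ that each block has entries in $\F_q$. That is not automatic and is in fact the main content of the lemma. (LI $\bar{\rho}_\m$) only gives $\im(\bar{\rho}_\m)\subset\kappa^\times\GL_2(\F_q)$, and Lemma \ref{image} only pins the image of $G_{\hat{F}}$ (hence of each $I_v$, $v\mid p$) down to possibly $(\F_{q^2}^\times\GL_2(\F_q))^{\mathcal{D}}$; so an inertia element could a priori be of the form $\gamma M$ with $\gamma\in\F_{q^2}\setminus\F_q$ and $M\in\GL_2(\F_q)$, and such a tuple never lies in $\Phi(H(\F_q))$, whose members have $\F_q$-entries. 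Ruling this out is precisely where the hypothesis $p>2w+2$ is used in the paper: one writes the tame characters $\delta_1,\delta_2$ of Theorem \ref{gal} explicitly via local class field theory, $\tilde{\delta}_i(x_h)=x_h^{\sum_j (m_j+1)p^j}$ resp.\ $x_h^{\sum_j n_jp^j}$; the constraint that the matrix entries lie in $\F_q\cup\gamma\F_q$ only gives $\tilde{\delta}_i(x_h)^2\in\F_q$, i.e.\ divisibilities such as $p^h-1\mid 2\bigl(\sum_j n_jp^j\bigr)(p^{l'}-1)$; and the bound $p>2w+2$ (which makes the ``digits'' $2n_j$ and $2(m_j+1)$ smaller than $p$) is what allows one to divide out the factor $2$ and conclude $\tilde{\delta}_i(x_h)\in\F_q$, hence $\bar{\rho}_\m(I_v)\subset\GL_2(\F_q)$. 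Your proposal instead attributes $p>2w+2$ to membership in the Fontaine--Laffaille range and to the nontriviality of $\bar{\varepsilon}^{\,w+1}$ on $I_\p$, neither of which is the point; without the rationality-over-$\F_q$ step the asserted inclusion into $\Phi(H(\F_q))$ does not follow.
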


\begin{proof}
Let $h= |J_{F,v}|$ or $2 |J_{F,v}|$ be the common tame level of the characters $\delta_1, \delta_2$ (which are defined in Theorem \ref{gal}) 
and $x_h$ be a generator of $\F_{p^h}^\times$. Let $\tilde{\delta}_1 , \tilde{\delta}_2 : \F_{p^h}^\times \to \bar{\F_p}^\times$ 
be the characters corresponding to $\delta_1 , \delta_2 : I_v \to \bar{\F_p}^\times$  by local class field theory. By Lemma \ref{image}, 
the all of array of the elements of $\bar{\rho}_\m(I_v)$ are in $\F_q \amalg \gamma \F_q$. Then $\tilde{\delta}_1(x_h)^2, \tilde{\delta}_2 (x_h)^2 \in \F_q$. Since 
$\tilde{\delta}_i (x_h) =  \prod_{\tau \in J_{F,v}} \overline{\tau} (x_h)^{m_\tau +1} = x_h^{\sum (m_i+1)p^i}$ or 
$\prod_{\tau \in J_{F,v}} \overline{\tau} (x_h)^{n_\tau} =  x_h^{\sum n_i p^i}$ where $n_i$ resp. $m_i$ is equal to $n_\tau$,
 resp. $m_\tau$ for $\bar{\tau}(x_h) = x_h^{p^i}$. If $q=p^l$ then $ \F_q \cap \F_{p^h} = \F_{p^{\gcd(h,l)}}$, 
we put $l' := \gcd(l,h)$ and $h = l' \cdot k$. 

Since $ \tilde{\delta}_1(x_h)^2, \tilde{\delta}_2 (x_h)^2 \in \F_q$, we have $ \tilde{\delta}_1(x_h)^2, \tilde{\delta}_2 (x_h)^2 \in \F_{p^{l'}}$,
 hence $p^h - 1 \vert 2 (\sum n_i p^i) (p^{l'}\newline-1)$ and $p^h - 1 \vert 2 (\sum (m_i +1) p^i) (p^{l'}-1)$, 
Hence $\sum_{i=0}^{k-1} p^{i\cdot l'} \vert   \sum_{i=0}^{h-1}  2 n_i p^i$ and
 $\sum_{i=0}^{k-1} p^{i\cdot l'} \vert   \sum_{i=0}^{h-1} \newline 2 (m_i +1) p^i$. Since $p > 2w +2$,
 the quotient of these numbers is equal to $ \sum_{i=0}^{l'-1} 2 n_i p^i$ resp.  $ \sum_{i=0}^{l'-1} 2 (m_i+1) p^i$ 
which are even numbers and implies that $p^h - 1 \vert (\sum n_i p^i) (p^{l'}-1)$ and $p^h - 1 \vert (\sum (m_i+1) p^i) (p^{l'}-1)$,
 then $\tilde{\delta}_1(x_h), \tilde{\delta}_2 (x_h) \in \F_q$. It implies that $\bar{\rho}_\m (I_v)$ is a subgroup of $\GL_2(\F_q)$.

%If $ \tilde{\delta}_1(x_h)^2, \tilde{\delta}_2 (x_h)^2 
%\notin \F_q$, then they are conjugate by the non-trivial 
%element of $\Gal(\F_{q^2}/\F_q)$, then $ \tilde{\delta}
%_1(x_h)^2 = \tilde{\delta}_2 (x_h)^{2q}$. since $p>2w+2$, 
%we have $ \tilde{\delta}_1(x_h) = \tilde{\delta}_2 (x_h)^q
%$ and so $ \tilde{\delta}_1(x_h) + \tilde{\delta}_2 (x_h)
%\in \F_q$. Hence $\text{tr} (\bar{\rho}_\m (I_v)) \subset
%\F_q$, then $\bar{\rho}_\m (I_v)$ is subgroup of $\GL_2(\F_q)%$.

Since $p$ is unramified in $F$ then for any $y \in I_{\p}$ we know $y$ acts on $\Ind_{F}^{F_0} \bar{\rho}_\m$ by $(\bar{\rho}_\m (g_i^{-1} y g_i))_{i}$. 
So for prove of lemma, it is enough to check that the determinant condition and it is true because $\bar{\rho}_\m (I_{\p})$ is subgroup of $\GL_2(\F_q)$ 
and theorem \ref{gal}.
\end{proof}

\begin{lemma}
Assuming that (LI $\bar{\rho}_\m$) holds and $p > 2w+2$, then
	
	$$H(\F_q) \subset \Ind_{F}^{F_0}\bar{\rho}_\m (G_{\hat{F}})$$
\end{lemma}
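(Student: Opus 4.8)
The goal is to prove the reverse inclusion to Lemma \ref{inertia}, namely that the full group $H(\F_q)$ is contained in the image $\Ind_{F}^{F_0}\bar{\rho}_\m(G_{\hat{F}})$. The plan is to combine the ``diagonal-partition'' description obtained from Goursat's lemma (the inclusions labelled (\ref{serre})) with control of determinants coming from $\mathcal{D} = \det\bar\rho_\m(G_{\hat F})$. First I would record that, by Lemma \ref{image}, over $G_{\hat F}$ we have $\SL_2(\F_q)\subset\bar\rho_\m(G_{\hat F})$; tensor-inducing, the image $\Ind_F^{F_0}\bar\rho_\m(G_{\hat F})$ contains (up to the Frobenius twists $\sigma_{i,\tau}$ recorded in $\Phi$) the subgroup $\Phi(\SL_2(\F_q)^I)$ at the level of $\PGL_2$'s, by the displayed chain (\ref{serre}). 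The first real step is to upgrade this from $\PGL_2$ to $\GL_2$: since $\SL_2(\F_q)$ has no nontrivial abelian quotient (for $q>3$, which is forced by (LI $\bar\rho_\m$)), Lemma \ref{group} applied factor-by-factor promotes $\pr(\Phi(\SL_2(\F_q)^I))\subset\pr(\Ind_F^{F_0}\bar\rho_\m(G_{\hat F}))$ to an honest inclusion $\Phi(\SL_2(\F_q)^I)\subset\Ind_F^{F_0}\bar\rho_\m(G_{\hat F})$.

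Next I would produce the elements of $H(\F_q)$ with varying (but simultaneously equal across all $i\in I$) determinant $\delta\in\mathcal{D}$. The point is that $H(\F_q)$ is generated by $\Phi(\SL_2(\F_q)^I)$ together with ``scalar-type'' elements $(\delta_i)_i$ with all $\det$ equal to a common $\delta\in\mathcal{D}$: indeed any $(M_i)_i\in H(\F_q)$ can be written, after choosing for each $i$ some $g_i\in\GL_2(\F_q)$ with $\det g_i = \delta$, as $(M_i g_i^{-1})_i\cdot(g_i)_i$ with the first factor in $\SL_2(\F_q)^I$. So it suffices to realise, for each $\delta\in\mathcal{D}$, one element of $\Ind_F^{F_0}\bar\rho_\m(G_{\hat F})$ lying over a $\Phi$-image of some $(g_i)_i$ with $\det g_i=\delta$ for all $i$. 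For this I would use inertia at $\p$: by Lemma \ref{inertia} we have $\Ind_F^{F_0}\bar\rho_\m(I_\p)\subset\Phi(H(\F_q))$, and by Theorem \ref{gal} together with Remark \ref{llc} the determinant of $\bar\rho_\m$ on $I_v$ realises (via $x\mapsto\prod_\tau\overline{\tau(x)}^{w+1}$, i.e.\ $\bar\varepsilon^{w+1}$ up to the $(m_\tau,n_\tau)$ bookkeeping) a subgroup of $\mathcal{D}$; one then checks, using $\gcd(p-1,w+1)=1$ (already in play in the paper) or directly the hypothesis $p>2w+2$, that $\det\bar\rho_\m(I_\p)$ is all of $\mathcal{D}$, so that for every $\delta\in\mathcal{D}$ there is $y\in I_\p$ with $\Ind_F^{F_0}\bar\rho_\m(y)=\Phi$ of an element whose every component has determinant $\delta$ (the tensor-induction formula makes the component at $i$ equal to $\bar\rho_\m(g_i^{-1}yg_i)$, whose determinant is $\det\bar\rho_\m$ evaluated on a $G_F$-conjugate of $y$, hence the same $\delta$ since $\mathcal{D}$ is the image of a one-dimensional, hence abelian, character).

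Finally I would assemble: $\Ind_F^{F_0}\bar\rho_\m(G_{\hat F})$ contains $\Phi(\SL_2(\F_q)^I)$ and, for each $\delta\in\mathcal{D}$, an element of $\Phi$ of some $(g_i)_i$ with all determinants $\delta$; multiplying, it contains $\Phi$ of every element $(M_i)_i$ with common determinant in $\mathcal{D}$, i.e.\ it contains $\Phi(H(\F_q))$. Since $\Phi$ is injective on $H(\F_q)$ (it is a product of the Frobenius automorphisms $M\mapsto M^{\sigma}$, which are bijective on $\GL_2(\F_q)$), and since $H(\F_q)$ is the source, this yields $H(\F_q)\subset\Ind_F^{F_0}\bar\rho_\m(G_{\hat F})$ after identifying via $\Phi$; more precisely the statement should be read with $\Phi$ implicit, exactly as in Lemma \ref{inertia}. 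The main obstacle I anticipate is the determinant surjectivity onto $\mathcal{D}$: one must argue carefully that $\det$ restricted to $\bar\rho_\m(I_\p)$ already exhausts $\mathcal{D}=\det\bar\rho_\m(G_{\hat F})$, which needs the tame-character computation of Theorem \ref{gal}, the hypothesis on $p$ versus $w$, and the fact that the cyclotomic-type character $\bar\varepsilon^{w+1}$ controlling $\det\bar\rho_\m$ is already ramified at $\p$ with the right image — together with keeping track of the twists $\sigma_{i,\tau}$ so that the ``common determinant'' condition across $i\in I$ is genuinely met rather than merely the weaker ``determinant in $\mathcal{D}$ for each $i$ separately''.
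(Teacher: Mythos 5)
Your proposal is correct and takes essentially the same route as the paper: promote the Goursat/$\PGL_2$-level inclusion to $\Phi(\SL_2(\F_q)^I)\subset\Ind_F^{F_0}\bar{\rho}_\m(G_{\hat{F}})$ via Lemma \ref{group}, then use the inertia image from Lemma \ref{inertia} to supply elements realizing every common determinant in $\mathcal{D}$. The only difference is that the paper compresses your second step into the single asserted identity $\Phi(H(\F_q))=\Phi(\SL_2(\F_q)^I)\cdot\Ind_{F}^{F_0}\bar{\rho}_\m(I_\p)$, so the determinant-surjectivity onto $\mathcal{D}$ that you flag as the main obstacle is precisely what that asserted identity takes for granted; your unpacking of it matches the paper's intended argument.
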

\begin{proof}
By \ref{serre} we have:
 $$\pr(\Phi(\SL_2(\F_q)^I)) \subset \pr(\Ind_{F}^{F_0} \bar{\rho}_\m(G_{\hat{F}}))$$
 Therefore lemma \ref{group} implies that $(\Phi(\SL_2(\F_q))^I) \subset (\Ind_{F}^{F_0} \bar{\rho}_\m(G_{\hat{F}}))$.
 
Then the lemma deduces by the fact $\Phi(H(\F_q)) = \Phi ((\SL_2(\F_q))^I) \cdot \Ind_{F}^{F_0}(\bar{\rho}_\m)(I_v)$.
\end{proof} 

 Denote the fixed field of $(\Ind_{F}^{F_0} \bar{\rho}_\m)^{-1} (\Phi(H(\F_q)))$ by $F'$.
 
 \begin{lemma} \label{fac}
 Assume that (LI $\bar{\rho}_\m$) holds and $\text{gcd}(p-1,w+1)=1$, then the restrictions  to $G_{F'}$ of the 
$G_{F_0}$-representations $\bar{\theta}$ and 
$\bar{\rho}_\partial^{ss}$ factor through $H(\F_q)$.
 \end{lemma}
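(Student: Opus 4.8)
The plan is to reduce both assertions to one observation: after restriction to $G_{F'}$, every ingredient entering $\bar{\theta}$ and the Hecke polynomial is a fixed functorial expression in the tuple of conjugates $(\bar{\rho}_\m^{g_i})_i$ of $\bar{\rho}_\m$, and by the very definition of $F'$ this tuple takes values in $\Phi(H(\F_q))$. \textbf{Step 1 (trivial permutation).} I would first note that $\Phi(H(\F_q))$ lies in the block-diagonal subgroup $\prod\GL_2(\F_q)$ of $\GL_{2d}$, so any $g\in G_{F_0}$ with $\Ind_{F}^{F_0}\bar{\rho}_\m(g)\in\Phi(H(\F_q))$ induces the trivial permutation of the cosets $G_{F_0}/G_F$; equivalently $g$ fixes the Galois closure of $F/F_0$, which is contained in $\hat F$. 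Hence $G_{F'}\subseteq G_{\hat F}$ (combining this with Lemmas \ref{image}, \ref{inertia} and the two lemmas preceding \ref{fac}, together with the computation of $\det\bar{\rho}_\m$ below, one in fact gets $F'=\hat F$). Consequently, for $g\in G_{F'}$ one has $g_i^{-1}gg_i\in G_F$ for all $i$, so the tensor induction specialises to $\bigl(\otimes\Ind_{F}^{F_0}\rho_0\bigr)(g)=\bigotimes_i\rho_0^{g_i}(g)$, a genuine functorial expression in the datum $\Ind_{F}^{F_0}\bar{\rho}_\m(g)$.

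\textbf{Step 2 (cyclotomic twists are powers of $\det\bar{\rho}_\m$), and $\bar{\theta}$.} By Theorem \ref{gal} together with (PR), which forces the central character to be trivial, one has $\det\bar{\rho}_\m=\bar{\varepsilon}^{w+1}$. Since $\gcd(p-1,w+1)=1$, the integer $w+1$ is invertible modulo $p-1$, hence modulo the order of $\bar{\varepsilon}|_{G_{\hat F}}$; writing $t(w+1)\equiv 1\pmod{p-1}$ gives $\bar{\varepsilon}|_{G_{\hat F}}=(\det\bar{\rho}_\m)^{t}|_{G_{\hat F}}$, so on $G_{F'}$ every power of $\bar{\varepsilon}$ is a power of $\det\bar{\rho}_\m$ (unambiguously, since by definition of $F'$ the determinants of all components of the tuple coincide). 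Now using $\bar{\rho}_{\tilde{\m}}\cong\bar{\rho}_\m\oplus\bar{\rho}_\m^{c,\vee}(-3)$ and the decomposition \ref{theta}, the representation $\wedge^2\bar{\rho}_{\tilde{\m}}^{\vee}(-5)$ is built from $\bar{\rho}_\m$ (through $\wedge^2$ and $\Sym^2$), from $\det\bar{\rho}_\m$, and from powers of $\bar{\varepsilon}$; applying $\otimes\Ind_{F}^{F_0}$ and restricting to $G_{F'}$, Steps 1 and 2 show $\bar{\theta}|_{G_{F'}}$ is a functorial expression in $\Ind_{F}^{F_0}\bar{\rho}_\m(g)\in\Phi(H(\F_q))$. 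In particular $\bar{\theta}$ is trivial on $\ker(\Ind_{F}^{F_0}\bar{\rho}_\m|_{G_{F'}})$ (there all components are $1$, hence so are all determinants and all $\bar{\varepsilon}^k$), so $\bar{\theta}|_{G_{F'}}$ factors as $G_{F'}\twoheadrightarrow\Ind_{F}^{F_0}\bar{\rho}_\m(G_{F'})\hookrightarrow\Phi(H(\F_q))\cong H(\F_q)$.

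\textbf{Step 3 ($\bar{\rho}_\partial^{ss}$).} For $i<d$, Theorem \ref{1-dim} gives $\bar{\rho}_\partial^{ss}=\bigoplus_k\chi_k$ with each $\chi_k$ a character of $G_{F_0}$. By Wedhorn's Eichler--Shimura relation (Theorem \ref{ES}) and (PR) (trivial $\psi$), the Hecke polynomial $H_{v_0}$, which by (PR) is the characteristic polynomial of $\bar{\theta}(\Frob_{v_0})$, satisfies $H_{v_0}(\Frob_{v_0})=0$ on $\bar{\rho}_\partial$; hence every $\chi_k(\Frob_{v_0})$ is an eigenvalue of $\bar{\theta}(\Frob_{v_0})$ for a density-one set of places $v_0$ of $F_0$. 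Since $\bar{\theta}$ and the $\chi_k$ factor through a common finite quotient of $G_{F_0}$, Chebotarev density upgrades this to: $\chi_k(g)$ is an eigenvalue of $\bar{\theta}(g)$ for every $g\in G_{F'}$. Applying this to $g\in\ker(\Ind_{F}^{F_0}\bar{\rho}_\m|_{G_{F'}})$, where $\bar{\theta}(g)=\Id$ by Step 2, forces $\chi_k(g)=1$; thus each $\chi_k|_{G_{F'}}$, and therefore $\bar{\rho}_\partial^{ss}|_{G_{F'}}$, factors through $H(\F_q)$.

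I expect the main obstacle to be Step 1 together with the determinant bookkeeping in Step 2: locating $F'$ correctly relative to $\hat F$ requires exploiting the permutation structure of the induced representation and feeding in all of the preceding lemmas (in particular \ref{serre} and \ref{inertia}), and -- most delicately -- one must genuinely verify $\det\bar{\rho}_\m=\bar{\varepsilon}^{w+1}$, ruling out any finite-order twist (this is exactly where (PR) is indispensable), before $\gcd(p-1,w+1)=1$ can be invoked to convert the cyclotomic twists appearing in \ref{theta} and in $H_{v_0}$ into functions of the $H(\F_q)$-valued datum. Everything else is formal once these points are secured.
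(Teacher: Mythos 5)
Your proposal is correct and follows essentially the same route as the paper: use $\gcd(p-1,w+1)=1$ to write the mod $p$ cyclotomic character as a power of $\det\bar{\rho}_\m$, so that $\bar{\theta}|_{G_{F'}}$ factors through $H(\F_q)$, and then combine the Eichler--Shimura annihilation with Theorem \ref{1-dim} to see that the kernel of this factorization also kills each character of $\bar{\rho}_\partial^{ss}$. Your Steps 1 and 3 merely make explicit points the paper leaves implicit (that $G_{F'}\subset G_{\hat F}$, and the Chebotarev upgrade from Frobenius elements to all $g$), so no further comparison is needed.
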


 \begin{proof}
 Note that the condition $\text{gcd}(p-1,w+1)=1$ implies that the mod $p$ cyclotomic character factors by $\im(\bar{\rho}_\m)$.  
Then, the following diagram implies that the restriction of $\bar{\theta}$ to $F'$ factors by $H(\F_q)$
 
\[ \begin{tikzcd}
G_{\hat{F}}  \arrow{r}{\Ind_{F}^{F_0} \bar{\rho}_{\tilde{\m}}}\arrow[swap]{d} & \GL_{6}(\kappa)^{\Hom(F,E)} \arrow{d}{\otimes} \\%
G_{F_0} \arrow{r}{\otimes-\Ind_{F}^{F_0} \bar{\rho}_{\tilde{\m}}}& \GL_{6^d}(\kappa)
\end{tikzcd}
\].

Also, by theorem \ref{ES},  $\text{char}_{\bar{\theta}(g)}(\bar{\rho}_\partial (g)) = 0$ for any $g \in G_{F'}$. 
By theorem \ref{1-dim}, the semi-simplification of $\bar{\rho}_\partial$ is a direct sum of characters, so $\bar{\theta} (g) = 1$ implies $\bar{\rho}_\partial^{ss} (g) = 1$.
Hence, $ \Ker\, \bar{\theta}\subset\Ker\, \bar{\rho}_\partial^{ss}$;
therefore, $\bar{\rho}_\partial^{ss}$ factors by $H(\F_q)$.
 \end{proof}
 
 \section{Proof of the Main Theorem}
The aim of this section is to prove our main results on the cohomology of locally symmetric space of $\GL_2(F)$
 after localisation at a strongly non-Eisenstein maximal ideal.\\

\subsection{Reduction from an arbitrary weight to the (PR) case}
Let $\lambda = (m_\tau , n_\tau)_{\tau \in \Hom(F,E)}\in (\Z^2_+)^{\Hom(F,E)}$ be an arbitrary dominant weight (so $m_\tau\geq n_\tau$ for  all $\tau$'s).

If $\psi : G_F \to \O^\times$ is a continuous character,
we define an automorphism $f_\psi$ of the Hecke algebra  $\T^S$ (defined in section 2) by the formula
$f_\psi([K^S g K^S]) = \psi(\text{Art}_F (\det(g)))^{-1}[K^SgK^S]$ for all $g\in G_f$ such that $g_S=1$.  If $\m \subset \T^S$
is a maximal ideal, then we define $\m(\psi) = f_\psi(\m)$.

\begin{proposition} \label{twist}
Let $\psi : G_F \to \O^\times$ be a continuous character satisfying the following conditions:

($1$) For each $v \in S_p$, $\psi$ is unramified at $v$.

($2$) There is $m^\prime= (m^\prime_\tau )_{\tau} \in \Z^{\Hom(F,E)}$
such that for each place $v\in S_p$, and
for each $k \in \O_{F_v}^\times$, we have

$$\psi(\text{Art}_{F_v}(k)) = \prod_{\tau \in \Hom(F_v,E)} \tau (k)^{-m^\prime_\tau}$$

Let $\mu \in (\Z^2_+)^{\Hom(F,E)}$
be the dominant weight defined by the formula $\mu_\tau =
(m^\prime_\tau, m^\prime_\tau)$ for each $\tau \in \Hom(F, E)$. Then for any $\lambda \in (\Z^2_+)^{\Hom(F,E)}$
there is an isomorphism

$$R\Gamma(X_K,\V_\lambda)_\m \cong R\Gamma(X_K,\V_{\lambda+\mu})_{\m(\psi)}$$

in $D(\O)$ which is equivariant for the action of $\T^S$ when $\T^S$ acts in the usual
way on the left-hand side and acts by $f_\psi$ on the right-hand side.
\end{proposition}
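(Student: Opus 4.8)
The plan is to realise the passage from $\lambda$ to $\lambda+\mu$ as tensoring the coefficient system by the rank-one local system $\V_\mu$, and to check that this tensoring shifts the $\T^S$-action exactly by $f_\psi$.

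Since $\mu_\tau=(m'_\tau,m'_\tau)$ is central for every $\tau$, the $G_p$-representation $\V_\mu$ is free of rank one over $\O$: the group acts through the character $\chi_\mu\colon g\mapsto\prod_{\tau\in\Hom(F,E)}\tau(\det g)^{m'_\tau}$, which has values in $\O^\times$ on $G(\Z_p)$. Highest weights add, so $\V_{\lambda+\mu}\cong\V_\lambda\otimes_\O\V_\mu$ as $\O[G_p]$-modules (for $p$ large the canonical integral models are compatible with tensor products, cf.\ corollary 1.9 of \cite{PT}), and hence $R\Gamma(X_K,\V_{\lambda+\mu})\cong R\Gamma(X_K,\V_\lambda\otimes_\O\V_\mu)$ in $D(\O)$. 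Next I would observe that the underlying local system of $\V_\mu$ is trivialisable on $X_K$: its monodromy on any connected component is the character $\gamma\mapsto\chi_\mu(\gamma)$ of the corresponding arithmetic subgroup, which factors through $\det$, and the determinant of a neat arithmetic subgroup of $\GL_2(F)$ lands in $\O_F^\times$ modulo torsion, on which $\chi_\mu$ is trivial (here the centrality and parity of $\mu$ are used). Fixing a trivialisation $\iota\colon\V_\mu\xrightarrow{\sim}\underline\O$ produces an isomorphism $\Theta\colon R\Gamma(X_K,\V_\lambda)\xrightarrow{\sim}R\Gamma(X_K,\V_{\lambda+\mu})$ in $D(\O)$.

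The crucial point is then that $\Theta$ is not $\T^S$-equivariant, and that the defect is precisely $f_\psi$. Indeed, the $\T^S$-action on the cohomology of a local system coming from an algebraic representation depends on more than the underlying local system: it involves, besides the topological Hecke correspondence, the action of the Hecke element on the coefficients together with the normalising powers of the $q_v$. Tensoring $\V_\lambda$ by the character $\V_\mu=\chi_\mu$ therefore rescales the operator $[K^SgK^S]$ (for $g_S=1$) by the scalar obtained by evaluating $\chi_\mu$ on $g$ via the normalised absolute values, a product of powers of the $q_v$ determined by $\chi_\mu$ and by the valuations of $\det g_v$. I would then identify this scalar with $\psi(\text{Art}_F(\det g))^{-1}$: condition ($2$) says that at each place $v\mid p$ the character $\psi\circ\text{Art}_{F_v}$ agrees on $\O_{F_v}^\times$ with the $v$-component of $\chi_\mu\circ\det$, so that $\psi\circ\text{Art}_F\circ\det$ differs from the arithmetic normalisation of $\chi_\mu$ only by a character unramified at $p$ (condition ($1$)); and because $F$ is CM the archimedean reciprocity maps are trivial and do not contribute. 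Hence $\Theta$ intertwines the usual $\T^S$-action on $R\Gamma(X_K,\V_\lambda)$ with the $f_\psi$-twisted one on $R\Gamma(X_K,\V_{\lambda+\mu})$, so $\Theta$ carries the $\m$-localisation of the source onto the $f_\psi(\m)=\m(\psi)$-localisation of the target, which is the assertion.

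I expect the main obstacle to be this last step: making the identification of the normalising scalar attached to $\otimes\,\V_\mu$ with $\psi(\text{Art}_F(\det g))^{-1}$ precise, which is where hypotheses ($1$)–($2$) on $\psi$ at the places above $p$ and the CM hypothesis on $F$ genuinely enter, and which is carried out by an explicit cochain-level computation of the Hecke action. A secondary point requiring care is the trivialisation step — that $\V_\mu$ is genuinely trivial as an $\O$-local system on the neat space $X_K$ — where $p$ being large (via \cite{PT}) together with the centrality and parity of $\mu$ are used.
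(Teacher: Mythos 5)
Your overall strategy -- realize the passage $\lambda\rightsquigarrow\lambda+\mu$ by tensoring with a rank-one object and then track the resulting defect in the Hecke action -- is the right one, and it is essentially how the result the paper actually invokes (Prop.~2.2.14 and Cor.~2.2.15 of \cite{ACC}; the paper gives no independent proof) is proved. But the two load-bearing steps of your write-up are respectively wrongly justified and not carried out. First, the trivialization of $\V_\mu$ on $X_K$: the monodromy on a component is $\gamma\mapsto\prod_\tau\tau(\det\gamma)^{m'_\tau}$, with $\det\gamma$ running through a finite-index subgroup of $\O_F^\times$, which for $d\geq 2$ contains units of infinite order; ``centrality and parity of $\mu$'' do not make this trivial (for $F^+$ real quadratic and $m'_\tau+m'_{\tau c}$ non-constant, $\prod_\tau\tau(u)^{m'_\tau}$ evaluated at a fundamental unit $u$ of $F^+$ is not even a root of unity). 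What controls the monodromy is the \emph{global existence of $\psi$}: via the product formula, condition $(2)$ forces $m'_\tau+m'_{\tau c}$ to be constant and $\chi_\mu$ to be of finite order on global units, and even then only up to a finite-order discrepancy that must be compensated by the prime-to-$p$ part of $\psi$. The clean way, and the one used in \cite{ACC}, is to twist not by $\V_\mu$ with an ad hoc trivialization but by the character $\psi\circ\mathrm{Art}_F\circ\det$ of $G(\A_f)$: it is trivial on $G(\Q)$, so the associated rank-one local system is canonically trivialized, and by condition $(2)$ its restriction to $K_p$ is $\chi_\mu^{-1}$, whence $\V_{\lambda+\mu}\cong\V_\lambda\otimes(\psi\circ\mathrm{Art}_F\circ\det)\vert_K$ as $K$-local systems; this bypasses your trivialization issue entirely.

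Second, the identification of the Hecke defect with $f_\psi$ -- which you yourself flag as the main obstacle -- is both missing and mis-described. Since every operator $[K^SgK^S]$ considered has $g_p=1$ and $g_S=1$, tensoring with a $K_p$-character introduces no coefficient twist in the Hecke correspondence at all; the defect arises solely because the trivializing section of the rank-one system is not Hecke-invariant: $[K^SgK^S]$ acts on it through the character of $\pi_0(X_K)=F^\times\backslash\A_{F,f}^\times/\det(K)$ induced by $\psi\circ\mathrm{Art}_F$, i.e.\ by the scalar $\psi(\mathrm{Art}_F(\det g))^{-1}$. That scalar is a value of the Galois character, a unit of $\O$ which is in general not a power of $q_v$; so your description of it as ``a product of powers of the $q_v$'' read off from normalized absolute values of $\chi_\mu$ cannot be correct (it would be only for a cyclotomic twist), and no formula in terms of $\chi_\mu$ and the valuations of $\det g$ alone can be, because $\psi$ is determined by conditions $(1)$--$(2)$ only up to an arbitrary character unramified at $p$, on which $f_\psi$ genuinely depends. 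As written, then, the proposal has a genuine gap at its central step; it becomes the argument of \cite{ACC} once the twisting object is taken to be $\psi\circ\mathrm{Art}_F\circ\det$ and the computation of the Hecke action on the trivializing section is actually performed.
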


\begin{proof}
See Proposition 2.2.14 and Corollary of 2.2.15 of \cite{ACC}. \end{proof}

There is an involution $\iota : \T^S \to \T^S$, resp. $\tilde{\iota} : \tilde{\T}^S \to \tilde{\T}^S$, which sends a double coset $[K^S g K^S] \mapsto [K^S g^{-1}K^S]$, resp.  $[\tilde{K}^{\bar{S}} g \tilde{K}^{\bar{S}}] \mapsto [\tilde{K}^{\bar{S}} g^{-1} \tilde{K}^{\bar{S}}]$ . If $\m \subset \T^S$, resp. $\tilde{\m} \subset \tilde{\T}^S$,
is a maximal ideal with residue field a finite
extension of $\kappa$, then we define $\m^\vee = \iota(\m)$, resp. $\tilde{\m}^\vee = \iota(\tilde{\m})$.

\begin{proposition}\label{poincare}
Let $\lambda=(m_\tau,n_\tau)_{\tau} \in (\Z^2_+)^{\Hom(F,E)}$, resp $\tilde{\lambda}=(\tilde{\lambda}_{\bar{\tau},1},\cdots,\tilde{\lambda}_{\bar{\tau},4})_{\tau} \in (\Z^4_+)^{\Hom(F^+,E)}$. Denote $\lambda^\lor:=(-n_\tau,-m_\tau)_\tau$, resp. $\tilde{\lambda}^\lor:=(-\tilde{\lambda}_{\bar{\tau},4},\cdots,-\tilde{\lambda}_{\bar{\tau},1})_{\bar{\tau}}$. Then there is an isomorphism

$$R\Hom_\O (R\Gamma(X_K,\V_\lambda)_\m,\O) \cong R\Gamma_c(X_K,\V_{\lambda^\lor})_{\m^\lor}[4d-1]$$
rsep.
$$R\Hom_\O (R\Gamma(\tilde{X}_{\tilde{K}},\V_{\tilde{\lambda}})_{\tilde{\m}},\O) \cong R\Gamma_c(\tilde{X}_{\tilde{K}},\V_{\tilde{\lambda}^\lor})_{\tilde{\m}^\lor}[8d]$$
in $D(\O)$ which is equivariant for the action of $\T^S$, resp. $\tilde{\T}^S$, when $\T^S$, resp. $\tilde{\T}^S$, acts by $\iota$, resp, $\tilde{\iota}$, on the
left-hand side and in its usual way on the right-hand side.

\end{proposition}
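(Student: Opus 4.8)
The plan is to deduce this from Poincaré–Lefschetz duality for the locally symmetric space $X_K$ (resp. the Shimura variety $\tilde{X}_{\tilde{K}}$), combined with the behaviour of the local systems $\V_\lambda$ under duality and the compatibility of duality with the Hecke action. First I would recall that $X_K$ is an oriented manifold of real dimension $4d-1$ (resp. $\tilde{X}_{\tilde{K}}$ of real dimension $8d$), so that for any local system $\mathcal{L}$ of finite free $\O$-modules there is a canonical duality isomorphism $R\Hom_\O(R\Gamma(X_K,\mathcal{L}),\O) \cong R\Gamma_c(X_K,\mathcal{L}^\vee)[4d-1]$ in $D(\O)$, where $\mathcal{L}^\vee$ denotes the $\O$-linear dual local system. (One gets this from the statement over a point via the usual manifold-with-corners version of Poincaré duality on the Borel–Serre compactification, or directly from \cite[Section 2.1]{ACC}; since $4d-1$ is odd the orientation sheaf is trivial, and for the even-dimensional $\tilde{X}_{\tilde{K}}$ it is a complex manifold hence canonically oriented.)

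Next I would identify the dual local system. The local system $\V_\lambda$ corresponds to the algebraic representation of $G$ of highest weight $\lambda = (m_\tau,n_\tau)_\tau$; its $\O$-linear dual is the representation of highest weight $-w_0\lambda$, where $w_0$ is the longest Weyl element of $G = \res_{\O_F/\Z}\GL_2$. For $\GL_2$ one has $-w_0(m_\tau,n_\tau) = (-n_\tau,-m_\tau)$, so $\V_\lambda^\vee \cong \V_{\lambda^\vee}$ with $\lambda^\vee = (-n_\tau,-m_\tau)_\tau$; similarly, for $\tilde{G}=\res_{\O_{F^+}/\Z}\GL_{4}$ the dual of $\V_{\tilde\lambda}$ is $\V_{\tilde\lambda^\vee}$ with $\tilde\lambda^\vee = (-\tilde\lambda_{\bar\tau,4},\dots,-\tilde\lambda_{\bar\tau,1})_{\bar\tau}$, exactly the weights in the statement. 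Here one must be a little careful, as noted in the excerpt, that there are a priori several $\O$-lattices realizing these representations, but since $p$ is large relative to $\lambda$ they all agree, so the identification $\V_\lambda^\vee \cong \V_{\lambda^\vee}$ is unambiguous.

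The last point is the Hecke equivariance. The duality pairing on cohomology intertwines the action of a double coset operator $[K^S g K^S]$ on $R\Gamma(X_K,\V_\lambda)$ with the action of its transpose $[K^S g^{-1} K^S]$ on $R\Gamma_c(X_K,\V_{\lambda^\vee})$; this is the standard fact that the adjoint of a Hecke correspondence under Poincaré duality is the transposed correspondence. Hence on the support side, the module $R\Gamma_c(X_K,\V_{\lambda^\vee})$ is localized at $\iota(\m) = \m^\vee$ precisely when $R\Hom_\O(R\Gamma(X_K,\V_\lambda),\O)$ is obtained from the $\m$-localization, giving the displayed isomorphism with the stated equivariance (and likewise for $\tilde{X}_{\tilde{K}}$ with $\tilde\iota$ and $\tilde\m^\vee$). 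I would cite \cite[Section 2.1]{ACC}, where exactly this Poincaré-duality-with-Hecke-action package is set up, for the details.

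I expect the only genuinely delicate step to be bookkeeping: getting the shift $[4d-1]$ (resp. $[8d]$) and the precise form of the dual weight $\lambda^\vee$ (resp. $\tilde\lambda^\vee$) to match the conventions of the paper, together with checking that the transpose of the Hecke operator is correctly identified as $\iota$ (resp. $\tilde\iota$) rather than its inverse. None of this is conceptually hard once the duality statement from \cite{ACC} is invoked; it is purely a matter of tracking signs, orientations, and the direction of the correspondence.
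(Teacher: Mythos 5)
Your argument is correct and is essentially the paper's own: the paper simply cites Proposition 3.7 of \cite{NT}, which is exactly this Poincar\'e duality statement in $D(\O)$ with Hecke equivariance via the transposed double cosets, proved along the lines you outline. (One small slip: odd dimension does not by itself make the orientation sheaf trivial; orientability of $X_K$ follows instead from the connectedness of $G_\infty\cong\GL_2(\C)^d$ acting on the symmetric space, but this does not affect the argument.)
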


\begin{proof}
See Proposition 3.7 of \cite{NT}.
\end{proof}

\begin{corollary}\label{weight}
Let $\lambda=(m_\tau,n_\tau)_{\tau} \in (\Z^2_+)^{\Hom(F,E)}$  be a pure dominant weight such that $n_\tau \equiv n_{\tau c} \text{ (mod } 2)$ and such that for any $\tau \in \Hom(F,E)$, $n_\tau$ and $m_\tau$ have the same sign. 
Then there exists a pair $(\mu,\psi)$ where
\begin{itemize}
\item $\mu = (k_\tau , k_\tau)_{\tau \in \Hom(F,E)}$ is a diagonal weight such that $\lambda +\mu$ 
or $(\lambda+\mu)^\vee$ satisfies the condition (PR),

\item and $\psi : G_F \to \O^\times$ is a continuous character 
\end{itemize}

which satisfies conditions (1) and (2) of Proposition \ref{twist} for $m^\prime = (k_\tau)_{\tau \in \Hom(F,E)}$. 
\end{corollary}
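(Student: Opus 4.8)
\textit{Proof proposal.} The plan is to construct the pair $(\mu,\psi)$ more or less by hand, the only genuinely non-formal input being the existence of algebraic Hecke characters of the CM field $F$ with a prescribed infinity type. Write $\mu=(k_\tau,k_\tau)_\tau$. First I would unwind (PR): the two symmetry requirements $m_\tau+k_\tau=m_{\tau c}+k_{\tau c}$ and $n_\tau+k_\tau=n_{\tau c}+k_{\tau c}$ both reduce to the single system $k_\tau-k_{\tau c}=m_{\tau c}-m_\tau$, because the a priori different equations $k_\tau-k_{\tau c}=m_{\tau c}-m_\tau$ and $k_\tau-k_{\tau c}=n_{\tau c}-n_\tau$ coincide exactly when $\lambda$ is pure (this is the content of $m_\tau-n_\tau=m_{\tau c}-n_{\tau c}$). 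Setting $a_\tau:=m_{\tau c}-m_\tau$, so that $a_{\tau c}=-a_\tau$, every solution is of the form $k_\tau=\tfrac12(K+a_\tau)$ once one insists that $K:=k_\tau+k_{\tau c}$ be a single constant independent of $\tau$. The hypothesis $n_\tau\equiv n_{\tau c}\pmod 2$ together with purity forces $m_\tau\equiv m_{\tau c}\pmod 2$, hence $a_\tau$ is even, so $k_\tau\in\Z$ as soon as $K$ is chosen even.

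Next I would produce $\psi$. The point of forcing $K=k_\tau+k_{\tau c}$ to be independent of $\tau$ is precisely that, $F$ being a CM field, this is exactly the condition under which the required algebraic part at $p$ is realized by an algebraic Hecke character of $F$. Granting this, and after possibly enlarging $E$ and twisting by a finite-order Hecke character of $p$-power conductor, one obtains a continuous $\psi\colon G_F\to\O^\times$ unramified outside $p$ and with $\psi(\text{Art}_{F_v}(k))=\prod_{\tau\in\Hom(F_v,E)}\tau(k)^{-k_\tau}$ for all $k\in\O_{F_v}^\times$ and all $v\mid p$; this is exactly conditions $(1)$ and $(2)$ of Proposition \ref{twist} for $m'=(k_\tau)_\tau$. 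That one can arrange the conductor away from $p$ without disturbing the algebraic behaviour at $p$ is where the splitting of $p$ in $F_0$ is convenient.

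Finally I would fix the sign and size of the even constant $K$ using the hypothesis that $m_\tau$ and $n_\tau$ have a common sign. The components of $\lambda+\mu$ at $\tau$ are $m_\tau+k_\tau=\tfrac12(m_\tau+m_{\tau c}+K)$ and $n_\tau+k_\tau=\tfrac12(n_\tau+n_{\tau c}+K)$. When all $m_\tau\le 0$, taking $K$ a sufficiently negative even integer makes $m_\tau+k_\tau<0$ for every $\tau$; then $\lambda+\mu$ is still dominant (a diagonal shift does not change the order within each $\tau$) and satisfies (PR), and the minimal admissible $|K|$ makes the size of $\lambda+\mu$ grow only linearly in the regularity gaps $m_\tau-n_\tau$. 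When instead all $n_\tau\ge 0$, I would pass to the dual: choose $K$ a sufficiently positive even integer so that $n_\tau+k_\tau>0$ for every $\tau$, so that $(\lambda+\mu)^\vee=(-(n_\tau+k_\tau),-(m_\tau+k_\tau))_\tau$ is dominant and satisfies (PR); passing to the dual weight is legitimate because of the Poincar\'e duality isomorphism of Proposition \ref{poincare}. In either case $\lambda+\mu$ or $(\lambda+\mu)^\vee$ lands in (PR), which is the assertion.

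The step I expect to be the real obstacle is the construction of $\psi$ with the stated local behaviour at $p$: one must realize the prescribed algebraic part by an honest algebraic Hecke character of $F$ — this is where it is essential that $F$ is CM, which is exactly what turns ``$K$ independent of $\tau$'' into a sufficient condition — and then adjust its conductor away from $p$. Everything else is bookkeeping with parities and signs; the role of the sign hypothesis is only to keep the resulting (PR) weight small enough that, in the application, the numerical hypothesis $p>2d^2(d+1)(-2w+4)$ on $\lambda$ is strong enough to invoke the Main Theorem for $\lambda+\mu$ or for $(\lambda+\mu)^\vee$.
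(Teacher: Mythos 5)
Your proposal is correct and follows essentially the same route as the paper: the paper takes $k_\tau=\tfrac{n_{\tau c}-n_\tau}{2}$ (which by purity is your $\tfrac12(K+a_\tau)$ with $K=0$, the parity hypothesis guaranteeing integrality) and then shifts by $\mp1$ according to the common sign, obtaining (PR) for $\lambda+\mu$ or for $(\lambda+\mu)^\vee$ exactly as in your case division. The construction of $\psi$ with the prescribed algebraic behaviour at the places above $p$, which you identify as the only non-formal input and sketch via algebraic Hecke characters of the CM field with infinity type having $k_\tau+k_{\tau c}$ constant, is precisely what the paper invokes by citing Lemma 2.2 of \cite{HSBT}.
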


\begin{proof}
%Note that $n_\tau \cdot m_\tau \geq 0$. The purity of $\lambda$ implies that all the integers $n_\tau,m_\tau$ have the same sign. 
Let $k_\tau = \frac{n_{\tau c} - n_\tau}{2}$ for any $\tau \in \Hom(F,E)$
 and $\mu^\prime = (k_\tau , k_\tau)_{\tau \in \Hom(F,E)}$. 
 Hence $\lambda+\mu^\prime = (a_\tau,b_\tau)_{\tau \in \Hom(F,E)}$ is a pure dominant weight 
with the same purity weight and such that $a_\tau = a_{\tau c} , b_\tau = b_{\tau c}$; moreover, all these integers have the same sign.

If $a_\tau \leq 0$, put $\mu = (k_\tau -1 , k_\tau -1)_{\tau \in \Hom(F,E)}$, then the weight $\lambda^{(PR)}= \lambda+\mu$ satisfies the condition (PR).

If $a_\tau \geq 0$, put $\mu = (k_\tau +1 , k_\tau +1)_{\tau \in \Hom(F,E)}$, then the weight $ \lambda^{(PR)}=(\lambda+\mu)^\vee$ satisfies the condition (PR).

The existence of $\psi$ follows from Lemma 2.2 of \cite{HSBT}.
\end{proof}

\begin{remark}
Actually, the proof also shows that one can choose $\mu$ such that for $\lambda^{(PR)}$ defined as in the proof above, 
we have $\vert w(\lambda^{(PR)}) \vert = \vert w(\lambda) \vert + 2$, where $w(\mu)$ denotes the purity weight of $\mu$.   
\end{remark}

\subsection{Direct summand factors of $\bar{\rho}_{\partial}$}
Under the assumptions of Lemma \ref{fac}, we can see $\bar{\theta}$ as an $H(\F_q)$-representation. First, we note that by \ref{theta} 
we have a decomposition
$$\wedge^2 
 (\bar{\rho}_{\tilde{\m}}^{\vee} (-5)) \cong {\det}^{-1}(-5) \oplus 
 \det(1)  \oplus \epsilon^{\otimes(-2)} \oplus (\Sym^2)^\vee \otimes \det (-2)$$
 as $\im(\bar{\rho}_\m)$-representations. 
 It follows from this that any irreducible subrepresentation of the restriction of the representation $
  \bar{\theta}=(\otimes \Ind_{F}^{F_0})\wedge^2 
 (\bar{\rho}_{\tilde{\m}}^{\vee} (-5)) $ to $G_{F^\prime}$ is a direct summand of $\otimes_{\tau 
  \in I} \phi_\tau$, where the representation $\phi_\tau$ is 
  either a character or is isomorphic to the twist of $\Sym^2$ 
   by a character as $H(\F_q)$-representation.
  
\begin{lemma} \label{1}
If $p > d^2 (d+1)(-2w+4)$, $\text{gcd}(p-1,w+1)=1$ and $\chi$ is a character which is direct 
summand of $\bar{\rho}_\partial$. Then $\chi$ 
is a direct summand of $\bar{\theta}$ as $G_{F^\prime}
$-representation.
\end{lemma}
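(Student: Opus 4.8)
The plan is to feed the Eichler–Shimura relations through the very explicit structure of $\bar{\theta}$ as a representation of $H(\F_q)$, and then to separate the genuine constituents from the accidental eigenvalue‑characters by a Fontaine–Laffaille weight computation at $\p$; the hypothesis $p>d^2(d+1)(-2w+4)$ is precisely what converts congruences modulo $p-1$ into honest equalities of integers.

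First I would record the input from Section~7. By Theorem~\ref{ES} the characteristic polynomial of $\bar{\theta}(\Frob_{v_0})$ annihilates $H^{\bullet}_{\partial}(\tilde{X}_{\tilde{K}},\V_{\tilde{\lambda}}/\varpi)[\tilde{\m}]$ for every split $v_0$, so by Chebotarev and continuity $\mathrm{char}_{\bar{\theta}(g)}(\bar{\rho}_{\partial}(g))=0$ for all $g\in G_{F'}$; since $\bar{\rho}_{\partial}^{ss}$ is a sum of characters (Theorem~\ref{1-dim}, applicable as we are in degrees $<d$), this says exactly that $\chi(g)$ is an eigenvalue of $\bar{\theta}(g)$ for every $g\in G_{F'}$. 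By Lemma~\ref{fac} both $\chi$ and $\bar{\theta}|_{G_{F'}}$ factor through $H(\F_q)$, and — using that $\bar{\rho}_{\m}$ has very large image and $p$ is large — $\bar{\theta}|_{G_{F'}}$ is semisimple, so it suffices to realize $\chi$ among its Jordan–Hölder constituents $C_{\alpha}$. By the discussion preceding the lemma and by \ref{theta}, each $C_{\alpha}$ is of the form $\bigotimes_{\tau}\phi_{\tau,\alpha}$ with every $\phi_{\tau,\alpha}$ either a character of $H(\F_q)$ or a character‑twist of $\Sym^2$ of the standard $2$‑dimensional representation.

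The heart of the argument is a comparison on inertia at $\p$. By Lemma~\ref{inertia} the image of $I_{\p}$ under $\Ind_{F}^{F_0}\bar{\rho}_{\m}$ lies in $\Phi(H(\F_q))$, so $I_{\p}\subset G_{F'}$ and I may restrict everything to $I_{\p}$. Since the tame quotient of $I_{\p}$ is procyclic and wild inertia acts through a unipotent part, $\bar{\theta}^{ss}|_{I_{\p}}$ is a direct sum of tame characters $\omega^{e_j}$, with $e_j$ the Fontaine–Laffaille weights of $\bar{\theta}$ (computable from Theorem~\ref{gal}, Remark~\ref{llc} and \ref{theta}), while $\chi|_{I_{\p}}$ is a single tame character. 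Evaluating the relation ``$\chi(y)$ is an eigenvalue of $\bar{\theta}(y)$'' on a topological generator of the tame inertia forces an equality of characters $\chi|_{I_{\p}}=\omega^{e_j}$ for some $j$, that is, a congruence between the Fontaine–Laffaille weight of $\chi$ and some $e_j$ modulo the order of $\omega$. On the other hand $\chi$ is a constituent of $\bar{\rho}_{\partial}$, so its Fontaine–Laffaille weight lies in the explicit set computed in Subsection~3.4 (Corollary~\ref{LP1}), and since $|\tilde{\lambda}|_{\mathrm{comp}}<p$ this set — as well as the $e_j$ — is represented by integers whose absolute value is of the order of $d(-2w+4)$. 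The bound $p>d^2(d+1)(-2w+4)$ makes all these quantities smaller than half the relevant modulus, so the congruence becomes an equality of integer weights. Combining this with the ``almost incompatibility'' of the two weight systems — the eigenvalue‑weights of $\bar{\theta}$ that do \emph{not} come from a character‑type constituent are disjoint, modulo $p-1$, from the weight set of $\bar{\rho}_{\partial}$ in degrees $<d$, while the ``middle'' eigenvalue of each $\Sym^2$‑factor is by \ref{theta} itself the genuine summand $\epsilon^{\otimes -2}$ — pins $\chi$ to a character‑type constituent $C_{\beta}$ of $\bar{\theta}$. Comparing the two as characters of $H(\F_q)$, which all factor through the common‑determinant map to $\mathcal{D}$ on which $\gcd(p-1,w+1)=1$ forces $I_{\p}$ to act through the full $\F_p^{\times}$, gives $\chi\cong C_{\beta}$; by semisimplicity of $\bar{\theta}|_{G_{F'}}$, $\chi$ is then a direct summand.

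The main obstacle is the weight bookkeeping of the previous paragraph: one must list the Fontaine–Laffaille weights of $\bar{\theta}|_{I_{\p}}$ (there are $6^{d}$ of them, organized by which eigenvalue of each $\Sym^2$‑piece and which of the three character‑pieces of $\wedge^{2}\bar{\rho}_{\tilde{\m}}^{\vee}(-5)$ is selected in each of the $d$ tensor‑induction slots), compare them with the weights occurring in $\bar{\rho}_{\partial}$ for $i<d$, and check that the numerical inequality $p>d^{2}(d+1)(-2w+4)$ is sharp enough to exclude every accidental coincidence modulo $p-1$ other than those forced by an actual isomorphism of representations. This is precisely where the explicit shape of the hypothesis on $p$ is consumed.
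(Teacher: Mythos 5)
Your route (pin down $\chi$ by a Fontaine--Laffaille weight comparison at $\p$) is genuinely different from the paper's, but its central step fails. The ``almost incompatibility'' you invoke --- that the inertia weights of eigenvalue characters of $\bar{\theta}|_{I_\p}$ which do \emph{not} come from one-dimensional constituents are disjoint from the weights allowed for $\bar{\rho}_\partial$ in degrees $i<d$ --- is false, because the Hecke-polynomial eigenvalues and the Kostant weights of the boundary cohomology are built from the same data. Concretely, by Theorem \ref{gal} the three eigen-lines of the factor $(\Sym^2\bar{\rho}_\m^{\vee})\otimes\det\bar{\rho}_\m\,(-2)$ of \ref{theta} have inertia characters $\delta_1^{-1}\delta_2\,\epsilon^{-2}$, $\epsilon^{-2}$, $\delta_1\delta_2^{-1}\epsilon^{-2}$, i.e.\ per-slot weights $-m_\tau+n_\tau+1$, $2$, $m_\tau-n_\tau+3$: exactly the length-$1$, length-$2$ and length-$3$ entries of the table used in Lemma \ref{2}. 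Hence the mixed (non-constituent) eigenvalue character obtained by taking the lower $\Sym^2$ eigen-line in one slot and the $\chi_1$-factor $\wedge^2\bar{\rho}_\m^{\vee}(-5)$ (of per-slot weight $w$, cf.\ Lemma \ref{3}) in the other $d-1$ slots has weight $(d-1)w+(-m_\tau+n_\tau+1)$, which is precisely the boundary weight of Kostant length one and is permitted by Corollary \ref{LP1} in every degree $i<d$. This is an equality of integers, so no largeness hypothesis on $p$ removes it; your argument therefore cannot force $\chi|_{I_\p}$ to match a character-type constituent $C_\beta$, which is the step everything else rests on. A second gap: even granting the pinning, $\chi$ and $C_\beta$ are powers of the common determinant of $H(\F_q)$, and agreement on $I_\p$ only compares them on the subgroup $\F_p^{\times}\subset\mathcal{D}$ (that is all that $\gcd(p-1,w+1)=1$ gives); when $\mathcal{D}\neq\F_p^{\times}$ this does not yield $\chi\cong C_\beta$ on $G_{F^\prime}$, which is what ``direct summand as $G_{F^\prime}$-representation'' requires.

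This is why the paper's proof of Lemma \ref{1} uses no $p$-adic Hodge theory at all: since $\SL_2(\F_q)$ is perfect, $\chi|_{G_{F^\prime}}$ is $\det^k$ on $H(\F_q)$, and the relation $\mathrm{char}_{\bar{\theta}(g)}(\chi(g))=0$ is evaluated at elements $g$ mapping to $(\mathrm{diag}(ab,b^{-1}))_i\in H(\F_q)$, available because $H(\F_q)$ lies in the image of $G_{\hat F}$. The value $\chi(g)=a^k$ is independent of $b$, while every eigenvalue of $\bar{\theta}(g)$ not attached to a character factor carries a factor $b^{\pm2}$; counting the roots of the finitely many equations $a^k=a^jb^l$ with $l\neq 0$ against the bound $p>d^2(d+1)(-2w+4)$ produces a $b$ avoiding all accidental coincidences, whence the contradiction. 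The weight comparison you propose is exactly what the paper performs only \emph{after} this lemma, in Lemmas \ref{2} and \ref{3}, once Frobenius reciprocity and Mackey theory have reduced the possibilities to the three characters $\chi_1,\chi_2,\chi_3$; at that stage the problematic mixed eigenvalue characters no longer occur, and even then the surviving weight $d\cdot w$ cannot be excluded by weights alone but needs the multiplicity computation via Serre duality and the Caraiani--Scholze vanishing. Any repair of your approach would have to reintroduce variation over group elements (not just inertia) to kill the $b$-dependence, which is precisely the paper's mechanism.
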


\begin{proof}
Since the restriction $\chi_{F^\prime}$ of $\chi$ to $G_{F'}$ factors 
by $H(\F_q)$ and since $\SL_2(\F_q)$ is a simple group, 
the character $\chi_{F^\prime}$ factors into a character of
the quotient $\mathcal{D}$ of $H(\F_q)$.
Therefore, there is an integer $k$ such that $\chi_{F^\prime}$ is isomorphic to $\det^k$ as an $H(\F_q)$-representation. So $\chi_{F^\prime}((M_i)_{i\in I})=a^k$, 
where $M_i = \text{diag}(a,1)$ for $a\in \F_p$ and any $i \in 
I$. We assume that $\chi$ is not a direct summand of $\bar{\theta}$.

  Since $\text{char}_{\bar{\theta}(g)}
  (\bar{\rho}_\partial (g)) = 0$ for any $g
  \in G_{F'}$  and $\chi$ is a character which is direct summand of $\bar{\rho}_\partial$, we see that $
  \text{char}_{\bar{\theta}(g)}(\chi (g))=0$ for any 
  $g\in G_{F'}$, hence $\chi(g)$ is an 
  eigenvalue of $\bar{\theta}(g)$.

If we put $M_i^\prime= \text{diag}(ab,b^{-1})$ for any $i\in I$, then $\chi((M_i')_{i\in I})=a^k$ is an eigenvalue of $\bar{\theta}((M_i')_{i\in I})$. 
 
 The condition $\gcd(p-1,w+1)=1$ implies that there is an element $t\in \Z/(p-1)\Z$ such that $\epsilon \cong \det^t$.
 
 Note that $a^k$ is an eigenvalue of a matrix of the form $\otimes_{\tau 
  \in I} \phi_\tau((M_i)_{i\in I})$ where the representation $\phi_\tau$ is 
  either ${\det}^{-1}\otimes\varepsilon^{-5}$ or 
 $\det\otimes\varepsilon$ or $\epsilon^{-2}$ or $(\Sym^2)^\vee \otimes \det\otimes\varepsilon^{-2}$.
Hence, using $\epsilon \cong \det^t$, we see that the eigenvalues of $\phi_\tau((M_i)$ are respectively $a^{-1}\cdot a^{-5t}$, $a\cdot a^t$, $a^{-2t}$ or they belong to
$\{a^{-1}\cdot a^{-2t}\cdot b^{-2},a\cdot a^{-2t}\cdot b^2,a^{-2t}\}$.
 Hence $a^k$ is of the form  $a^j b^l$ where $-2d \geq l \geq 2d$ is a non-zero even number. There are only $(-2w+4)d$ possible values for 
$j$, because $(w+1). \{ -1-5t, 1+t, -2t, -1-2t,1-2t,-2t\}= \{-w+4,w,2,-w+1,w+3,2\}$ (note that $t (w+1) =-1 \text{ (mod }p-1)$).

Since the equation $a^k= a^j x^l \text{ (mod } p)$ has at most $l$ roots and $p > 
d^2 (d+1) (-2w+4)$, there must exist an element $b\in \F_p$ such that $a^k$ does not 
belong to the set of eigenvalues of  
$\bar{\theta}((M_i')_{i\in I})$. Contradiction. 
 Thus, $\chi$ is a direct summand of $\bar{\theta}$ as 
 $G_{F_0}$-representation.

\end{proof}

\begin{remark} \label{bound}
The equation $a^k= a^j x^l \text{ (mod } p)$ has no root or it has exactly $\text{gcd}(p-1,l)$ roots. Therefore, the bound $d^2 (d+1)(-2w+4)$ can be improved as

$$2 d \cdot (\sum_{i=1}^{d} \text{gcd}(p-1,i)) (-2w+4)$$
which unfortunately depends on $p$.
\end{remark}  

By the above Lemma, the character $\chi$ is equal to $\bar{\psi}$, which is a direct summand 
 of $\bar{\theta}$ as $G_{F'}$-representation, so by the Frobenius reciprocity
 theorem, we have a nonzero $G_{F_0}$-equivariant map $\Ind_{F'}
 ^{F_0} \psi \rightarrow \chi$.
 
 By Mackey theorem, there is a one-dimensional subquotient in 
 the $\Ind_{F'}^{F_0} \psi$ if and only if for any $\tau \in \Gal 
 (F'/F_0)$ we have $\psi \cong \psi^{\tau}$. This implies that $
 \chi$ is isomorphic to $(\otimes \Ind_{F}^{F_0})
((\wedge^2\bar{\rho}_{\m}^{\vee})(-5))$ or $(\otimes \Ind_{F}
^{F_0}) ((\wedge^2 
\bar{\rho}_{\m}^{c})(1))$ or $(\otimes \Ind_{F}^{F_0})     
(\epsilon^{\otimes -2})$ as $G_{F'}$ representation.
Thus by lemma \ref{inertia} they have the same Fontaine-Laffaille 
weights.

Therefore the  Fontaine-Laffaille weight of $\chi$ is equal to 
$d(-w+4)$ or $d(w+1)$ or $2d$.

\subsection{Proof of the main Theorem}

 \begin{lemma} \label{2}
 $\chi$ can not have  Fontaine-Laffaille weight $d(-w+4)$ or $2d$.
 \end{lemma}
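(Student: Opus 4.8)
The plan is to locate the Fontaine--Laffaille weight of $\chi$ inside the explicit finite list supplied by the Lan--Polo theorem, and then to rule out the two values $2d$ and $d(-w+4)$ by a direct computation of the relevant $p_{\tilde{\lambda}}(w)$. Throughout, $\chi$ is a character occurring as a direct summand of $\bar{\rho}_\partial=H^i_\partial(\tilde{X}_{\tilde{K}},\V_{\tilde{\lambda}}/\varpi)[\tilde{\m}]$ for a fixed $i<d$, and we work under (PR), (LI $\bar{\rho}_\m$) and the running hypothesis on $p$.

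First I would compare boundary cohomology with compactly supported cohomology after localising at $\tilde{\m}$. Since $\bar{\rho}_{\tilde{\m}}\cong\bar{\rho}_\m\oplus\bar{\rho}_\m^{c,\vee}(-3)$ is of length $2$ and decomposed generic (because (LI $\bar{\rho}_\m$) makes $\bar{\rho}_\m$, hence $\bar{\rho}_{\tilde{\m}}$, decomposed generic), Theorem \ref{cs} gives $H^i(\tilde{X}_{\tilde{K}},\V_{\tilde{\lambda}}/\varpi)_{\tilde{\m}}=0$ for $i<d$, so the long exact sequence \ref{exact} yields an injection
\[
H^i_\partial(\tilde{X}_{\tilde{K}},\V_{\tilde{\lambda}}/\varpi)_{\tilde{\m}}\hookrightarrow H^{i+1}_c(\tilde{X}_{\tilde{K}},\V_{\tilde{\lambda}}/\varpi)_{\tilde{\m}} .
\]
Since $H^i_\partial(\tilde{X}_{\tilde{K}},\V_{\tilde{\lambda}}/\varpi)[\tilde{\m}]$ embeds into $H^i_\partial(\tilde{X}_{\tilde{K}},\V_{\tilde{\lambda}}/\varpi)_{\tilde{\m}}$ and $T_{cr}$ is exact and fully faithful, the Fontaine--Laffaille weights of $\chi$ form a subset of those of $H^{i+1}_c(\tilde{X}_{\tilde{K}},\V_{\tilde{\lambda}}/\varpi)_{\tilde{\m}}$; by Corollary \ref{LP1} (valid once one checks $|\tilde{\lambda}|_{comp}<p$, which follows from $p>2d^2(d+1)(-2w+4)$ together with (PR)) these lie in $\{\,p_{\tilde{\lambda}}(w)\mid w\in W^P,\ \ell(w)\le i+1\,\}\subseteq\{\,p_{\tilde{\lambda}}(w)\mid \ell(w)\le d\,\}$.

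The core of the proof is then the estimate $p_{\tilde{\lambda}}(w)<2d$ for every $w\in W^P$ with $\ell(w)\le d$. Writing $w=(w_{\bar{\tau}})_{\bar{\tau}\in\Hom(F^+,E)}$, one has $p_{\tilde{\lambda}}(w)=\sum_{\bar{\tau}}p_{\tilde{\lambda}_{\bar{\tau}}}(w_{\bar{\tau}})$ and $\ell(w)=\sum_{\bar{\tau}}\ell(w_{\bar{\tau}})$, so it suffices to understand the six Kostant representatives of $W^P$ for the Siegel parabolic $\GL_2\times\GL_2\subset\GL_4$. Under (PR) the corresponding-weight relation gives $\tilde{\lambda}_{\bar{\tau}}=(-n_\tau,-m_\tau,m_\tau,n_\tau)$ (writing $\tau$ for the chosen lift of $\bar{\tau}$), and evaluating $[\,w_{\bar{\tau}}(\tilde{\lambda}_{\bar{\tau}}+\rho)-\rho\,](0,0,1,1)$ on these six representatives shows, with $w=m_\tau+n_\tau<0$ the (constant) purity weight,
\[
p_{\tilde{\lambda}_{\bar{\tau}}}(w_{\bar{\tau}})-w\ \in\ \{\,0,\ -2m_\tau+1,\ -w+2,\ -2n_\tau+3,\ -2w+4\,\}\quad\text{for }\ell(w_{\bar{\tau}})=0,1,2,3,4 .
\]
Using only $n_\tau\le m_\tau<0$, one checks that each of the four nonzero entries on the right is strictly smaller than $\ell(w_{\bar{\tau}})\cdot(-w+2)$; summing over $\bar{\tau}$ (the case $w=1$ being immediate, as $-w+2>0$) gives
\[
p_{\tilde{\lambda}}(w)=dw+\sum_{\bar{\tau}}\bigl(p_{\tilde{\lambda}_{\bar{\tau}}}(w_{\bar{\tau}})-w\bigr)<dw+(-w+2)\,\ell(w)\le dw+d(-w+2)=2d .
\]
Since $w<0$ forces $2d<d(-w+4)$, the Fontaine--Laffaille weight of $\chi$ --- being strictly less than $2d$ --- is neither $2d$ nor $d(-w+4)$, which is exactly the assertion of the lemma.

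I expect the main difficulty to be the combinatorial bookkeeping of the third paragraph: writing down the six minimal-length coset representatives in $W^P$ for $\GL_4/(\GL_2\times\GL_2)$ together with their lengths, computing $p_{\tilde{\lambda}_{\bar{\tau}}}(w_{\bar{\tau}})$ in each of the six cases, and verifying the four comparisons $p_{\tilde{\lambda}_{\bar{\tau}}}(w_{\bar{\tau}})-w<\ell(w_{\bar{\tau}})(-w+2)$ from the inequalities $m_\tau<0$ and $n_\tau\le m_\tau$. A secondary point, to be dealt with first, is to confirm that the hypotheses of Theorem \ref{cs} and of Corollary \ref{LP1} are genuinely available under the standing assumptions.
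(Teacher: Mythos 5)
Your argument is correct, and it follows the paper's skeleton up to the decisive step: pass from $H^i_\partial(\tilde{X}_{\tilde{K}},\V_{\tilde{\lambda}}/\varpi)[\tilde{\m}]$ to $H^{i+1}_c(\tilde{X}_{\tilde{K}},\V_{\tilde{\lambda}}/\varpi)$ via the boundary long exact sequence, then control Fontaine--Laffaille weights through Lan--Polo and the table of the six Kostant representatives, whose values $p(\widetilde{w})-w\in\{0,-2m_\tau+1,-w+2,-2n_\tau+3,-2w+4\}$ you reproduce correctly. Where you genuinely diverge is in how the two forbidden weights are excluded. The paper treats them separately: $d(-w+4)$ is ruled out because it is realized only by the longest Kostant element, of length $4d>i+1$, and $2d$ is ruled out by a proof by contradiction using the exchange identity $p(\{1,2\})+p(\{3,4\})=p(\{1,3\})+p(\{2,4\})$ (with matching lengths) plus a counting argument. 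You instead prove the uniform strict bound $p_{\tilde{\lambda}}(\widetilde{w})<dw+(-w+2)\,\ell(\widetilde{w})\le 2d$ for every $\widetilde{w}\in W^P$ with $\ell(\widetilde{w})\le d$, checking termwise that each nonzero entry is $<\ell(w_{\bar{\tau}})(-w+2)$ under $n_\tau\le m_\tau<0$; since $d(-w+4)>2d$, both exclusions follow at once. This is cleaner and slightly stronger (no weight $\ge 2d$ occurs at all in these degrees), and it avoids the paper's delicate equality-case analysis. A further small gain: you invoke the localized isomorphism coming from Theorem \ref{cs} rather than formula (\ref{ls2}), which the paper cites here but which requires $\tilde{\lambda}$ regular and so is problematic when $m_\tau=n_\tau$; your route only needs decomposed genericity of $\bar{\rho}_{\tilde{\m}}$, exactly as the paper itself assumes elsewhere (and only an injection, not the isomorphism, is needed). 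Two cosmetic points: the parenthetical ``the case $w=1$'' should read ``the case $\widetilde{w}=\mathrm{Id}$'' (the symbol $w$ is overloaded with the purity weight, which is negative here), and strictness of the summed inequality should be noted to come from the existence of at least one $\bar{\tau}$ with $\ell(w_{\bar{\tau}})\ge 1$ when $\widetilde{w}\neq\mathrm{Id}$; neither affects correctness.
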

 
\begin{proof}
The character $\chi$ occurs as a subrepresentation of $H_{\partial}^i (\tilde{X}
_{\tilde{K}}, \V_{\tilde{\lambda}}/ \varpi)$; by Formula (\ref{ls2}),
$\chi$ is also a subrepresentation of $ H_{c}^{i+1} (\tilde{X}
_{\tilde{K}}, \V_{\tilde{\lambda}}/ \varpi)$. 

By theorem \ref{LP}, Fontaine-Laffaille weights of 
$ H_{c}^{i+1} (\tilde{X}_{\tilde{K}}, \V_{\tilde{\lambda}}/ 
\varpi)$ form a subset of $\{ p(\widetilde{w})\vert \widetilde{w} \in W^P , \ell (\widetilde{w}) \leq 
i+1 \}$ where $p(\widetilde{w})=  -\widetilde{w}(\rho + \tilde{\lambda})(H) + \rho(H)
$ ($H = \text{diag} (0,0, ...,0,-1,-1,...,-1)$).

The only representation of $d(-w+4)$ as $p(\widetilde{w})$ is $p(\widetilde{w}_0)$, where 
$\widetilde{w}_0 \in W^P$ is the longest length element. Since $\ell(\widetilde{w}_0) =4d$ and $i<d
$, then $d(-w+4)$ is not Fontaine-Laffaille weight of 
$ H_{c}^{i+1} (\tilde{X}_{\tilde{K}}, \V_{\tilde{\lambda}}/ 
\varpi)$ and also it is not Fontaine-Laffaille weight of 
$\chi$.

For any $\widetilde{w} = \otimes_{\bar{\tau} \in  \tilde{I}} \widetilde{w}_{\bar{\tau}} 
\in W^P$ we have $p(\widetilde{w}) = \sum_{\bar{\tau} \in  \tilde{I}} 
p(\widetilde{w}_{\bar{\tau}})$ and $\ell(\widetilde{w}) = \sum_{\bar{\tau} \in  
\tilde{I}} \ell(\widetilde{w}_{\bar{\tau}})$.  The set $W_{\bar{\tau}}^P$ has 
six elements and every element corresponds to a subset of $
\{1,2,3,4\}$ with two elements.
\\

\begin{tabularx}{0.8\textwidth} { 
  | >{\raggedright\arraybackslash}X 
  | >{\centering\arraybackslash}X 
  | >{\raggedleft\arraybackslash}X | }
 \hline
 Subset of $\{1,2,3,4\}$ & $\ell(\widetilde{w})$ & $p(\widetilde{w})$ \\
 \hline
 $\{1,2\}$  & $0$  & $w$  \\
\hline
$\{1,3\}$ &  $1$ & $-m_\tau + n_\tau +1$\\
\hline 
$\{1,4\}$ & $2$ & $2$ \\
\hline
$\{2,3\}$ & $2$ & $2$\\
\hline
$\{2,4\}$ & $3$ & $-n_\tau +m_\tau +3$\\
\hline
$\{3,4\}$ & $4$ & $-w +4$ \\
\hline
\end{tabularx}
\\

We assume that there is $\widetilde{w} \in W^P$ such that $p(\widetilde{w})=2d$ and $\ell  
(\widetilde{w}) \leq d$. Since $p(\{1,2\}) + p(\{3,4\}) = p(\{1,3\}) + 
p(\{2,4\})$ and $\ell(\{1,2\}) + \ell(\{3,4\}) = \ell(\{1,3\}) + 
\ell(\{3,4\}),$ we can assume that there is no $\bar{\tau} \in 
\Hom(F^+,E)$ such that $w_{\bar{\tau}} $ corresponds to $\{3,4\}$ or 
there is no $\bar{\tau} \in \Hom(F^+,E )$ such that $w_{\bar{\tau}} $ 
corresponds to $\{1,2\}$. The second case is impossible because $
\sum_{\bar{\tau} \in  J_{F^{+}}} \ell(w_{\bar{\tau}}) \leq d$ and 
$-m_\tau + n_\tau +1 \neq	2$.

The inequality $\sum_{\bar{\tau} \in  \Hom(F^+,E)} 
\ell(\widetilde{w}_{\bar{\tau}}) \leq d$ implies that the cardinality of $\{ 
\bar{\tau} \in J_{F^+} \vert \widetilde{w}_{\bar{\tau}} \leftrightarrow 
\{1,2\} \}$ is larger than the cardinality of
$\{ \bar{\tau} \in J_{F^+} \vert \widetilde{w}_{\bar{\tau}} \nleftrightarrow 
\{1,2\} , \{1,3\} \}$.

But $w + 2, w-n_\tau +m_\tau +3, w-w+4$ all is less than or equal 
to 4, then $p(\widetilde{w}) = \sum_{\bar{\tau} \in  \Hom(F^+,E)} 
p(\widetilde{w}_{\bar{\tau}}) \leq 2d$ and we have equality only when there 
is not $\bar{\tau} \in  \Hom(F^+,E)$ such that it corresponds to $
\{1,2\}$, but it is impossible as above. Therefore $2d$  is not 
Fontaine-Laffaille weight of 
$ H_{c}^{i+1} (\tilde{X}_{\tilde{K}}, \V_{\tilde{\lambda}}/ 
\varpi)$ and also it is not Fontaine-Laffaille weight of 
$\chi$.

\end{proof}

Note that for every finite generated torsion $T^S$-module, resp, $\tilde{T}^S$-module $V$, we have $V[\m]=0$, resp, $V[\tilde{\m}]=0$ if and only if $V_\m=0$, resp, $V_{\tilde{\m}}=0$.

  \begin{lemma} \label{3}
	If $i<d$, then $d\cdot w$ is not a Fontaine-Laffaille 
	weight of  $ H_{c}^{i+1} (\tilde{X}_{\tilde{K}}, 
	\V_{\tilde{\lambda}}/ 
\varpi)_{\tilde{\m}}$.
  \end{lemma}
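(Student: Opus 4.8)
The plan is to convert the non‑occurrence of the weight $d\cdot w$ into the vanishing of a coherent cohomology group on the toroidal compactification, and then, via Serre duality and Poincar\'e duality, into an instance of the Caraiani--Scholze vanishing.

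First, $p>2d^2(d+1)(-2w+4)$ forces $|\tilde\lambda|_{comp}<p$, so Corollary \ref{LP1} applies and the multiplicity of the Fontaine--Laffaille weight $d\cdot w$ in $H_c^{i+1}(\tilde X_{\tilde K},\V_{\tilde\lambda}/\varpi)_{\tilde\m}$ equals
$$\sum_{\substack{\widetilde w\in W^P,\ \ell(\widetilde w)\le i+1\\ p_{\tilde\lambda}(\widetilde w)=d\cdot w}}\dim_{\F_p}H^{i+1-\ell(\widetilde w)}\big(\tilde X_{\tilde K,\kappa_\p}^{tor},\mathscr{E}_G(\V_{\tilde\lambda_{\widetilde w}})^{sub}\big)_{\tilde\m}.$$
Writing $\widetilde w=\otimes_{\bar\tau}\widetilde w_{\bar\tau}$ and $p_{\tilde\lambda}(\widetilde w)=\sum_{\bar\tau}p(\widetilde w_{\bar\tau})$, I would read off from the list of the six pairs $(\ell(\widetilde w_{\bar\tau}),p(\widetilde w_{\bar\tau}))$ in the proof of Lemma \ref{2} that, under (PR) (so $0>m_\tau\ge n_\tau$ and $w=m_\tau+n_\tau\le -2$), each of the six values $p(\widetilde w_{\bar\tau})$ is $\ge w$, with equality exactly for the subset $\{1,2\}$, which is the identity and has length $0$. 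Hence $p_{\tilde\lambda}(\widetilde w)\ge d\cdot w$, with equality only for $\widetilde w=\mathrm{id}$, and the multiplicity above collapses to $\dim_{\F_p}H^{i+1}(\tilde X_{\tilde K,\kappa_\p}^{tor},\mathscr{E}_G(\V_{\tilde\lambda})^{sub})_{\tilde\m}$. Thus the lemma is equivalent to the vanishing of this group for $0\le i<d$.

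Next I would invoke Serre duality. Since $\tilde X_{\tilde K,\kappa_\p}^{tor}$ is smooth and proper of dimension $4d$ with dualizing sheaf $\mathscr{E}_G(\V_{-2\rho_{nc}})^{sub}$, Corollary \ref{Ser} identifies $H^{i+1}(\tilde X_{\tilde K,\kappa_\p}^{tor},\mathscr{E}_G(\V_{\tilde\lambda})^{sub})_{\tilde\m}$ with the $\kappa_\p$-dual of $H^{4d-i-1}(\tilde X_{\tilde K,\kappa_\p}^{tor},\mathscr{E}_G(\V_{-2\rho_{nc}-w_{0,G}\tilde\lambda})^{can})_{\tilde\m^\vee}$. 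A computation with Kostant representatives identifies the bundle $\mathscr{E}_G(\V_{-2\rho_{nc}-w_{0,G}\tilde\lambda})^{can}$ with $\mathscr{E}_G(\V_{\tilde\mu})^{can}$, where $\tilde\mu:=(-n_\tau+2,-m_\tau+2,m_\tau-2,n_\tau-2)_{\bar\tau\in\tilde I}$ is dominant for $\tilde G$ and corresponds to the \emph{trivial} Kostant representative; feeding this back through Corollary \ref{LP1} (applied to $\V_{\tilde\mu}$, using $|\tilde\mu|_{comp}<p$), the $\kappa_\p$-dimension of this coherent group is the multiplicity of a Fontaine--Laffaille weight in $H^{4d-i-1}(\tilde X_{\tilde K},\V_{\tilde\mu}/\varpi)_{\tilde\m^\vee}$, hence at most $\dim_{\F_p}H^{4d-i-1}(\tilde X_{\tilde K},\V_{\tilde\mu}/\varpi)_{\tilde\m^\vee}$. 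So it is enough to see that this last group vanishes.

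Finally, since $i<d$ the degree $4d-i-1$ exceeds the range in which Theorem \ref{cs} directly gives vanishing of ordinary cohomology, so I would dualize once more: by Poincar\'e duality (Proposition \ref{poincare}, reduced mod $\varpi$), $H^{4d-i-1}(\tilde X_{\tilde K},\V_{\tilde\mu}/\varpi)_{\tilde\m^\vee}$ is $\F_p$-dual to $H^{4d+i+1}_c(\tilde X_{\tilde K},\V_{\tilde\mu^\vee}/\varpi)_{\tilde\m}$. As $i\ge 0$ we have $4d+i+1>d$, and under the running assumptions $\tilde\m$ satisfies the hypotheses of Theorem \ref{cs} ($F^+\ne\Q$, and $\bar\rho_{\tilde\m}\cong\bar\rho_\m\oplus\bar\rho_\m^{c,\vee}\otimes\bar\epsilon^{1-2n}$ is of length $\le 2$ and decomposed generic, the latter by (LI $\bar\rho_\m$)), so $H^{4d+i+1}_c(\tilde X_{\tilde K},\V_{\tilde\mu^\vee}/\varpi)_{\tilde\m}=0$. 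Unwinding the identifications, $d\cdot w$ is not a Fontaine--Laffaille weight of $H_c^{i+1}(\tilde X_{\tilde K},\V_{\tilde\lambda}/\varpi)_{\tilde\m}$, which is the claim. The step I expect to be the main obstacle is the weight bookkeeping in the Serre-duality paragraph: one must pin down the Siegel Levi $G\subset\tilde G$, the half-sum $\rho_{nc}$ of the non-compact roots, and the Kostant representatives precisely enough to recognise $\mathscr{E}_G(\V_{-2\rho_{nc}-w_{0,G}\tilde\lambda})^{can}$ as $\mathscr{E}_G(\V_{\tilde\mu})^{can}$ for the stated dominant $\tilde\mu$ \emph{and} for the trivial Kostant representative, since it is precisely this that keeps the \'etale cohomology in degree $4d-i-1$ — rather than a much higher degree — and hence within reach of Poincar\'e duality together with Theorem \ref{cs}. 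The remaining inputs ($|\tilde\lambda|_{comp},|\tilde\mu|_{comp}<p$ from the lower bound on $p$, dominance of $\tilde\mu$, and that $\tilde\m$ and $\tilde\m^\vee$ fall under Theorem \ref{cs}) are routine.
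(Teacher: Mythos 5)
Your argument is correct and is essentially the paper's own proof: Corollary \ref{LP1} converts the multiplicity of the weight $d\cdot w$ (which under (PR) is attained only by the trivial Kostant representative, as you check and the paper leaves implicit) into $\dim_{\kappa_\p} H^{i+1}(\tilde{\X}_{\tilde K,\p}^{tor},\W_{\tilde\lambda,\p}^{sub})_{\tilde\m}$, Serre duality (Corollary \ref{Ser}) turns this into $H^{4d-i-1}$ of the canonical extension attached to $-2\rho_{nc}-w_{0,G}\tilde\lambda$ localized at $\tilde\m^\vee$, which is then reinterpreted through Corollary \ref{LP1} for $\tilde\mu$ and killed by the Caraiani--Scholze vanishing of Theorem \ref{cs}. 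The only divergence is the very last step: the paper applies the ordinary-cohomology part of Theorem \ref{cs} directly in degree $4d-i-1$ (reading its bound as the middle degree $n^2d=4d$, consistently with how the theorem is used right after its statement for all $i<n^2d-1$), whereas you insert one extra Poincar\'e duality and invoke the compactly-supported part in degree $4d+i+1$ at $\tilde\m$ --- a harmless variant, which has the small merit of working even with the bound ``$d$'' as literally printed in Theorem \ref{cs}.
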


  \begin{proof}
  The integer $d\cdot w$ is equal to $p(\Id)$ where $\Id$ is the trivial element of $
W^P$.  By Corollary \ref{LP1}, the multiplicity of the weight $d\cdot w
$ in $ H_{c}^{i+1} (\tilde{X}_{\tilde{K}}, \V_{\tilde{\lambda}}/ 
\varpi)_{\tilde{\m}}$ is equal to the $\kappa_\p$-dimension of $ H^{i+1} 
(\tilde{\X}_{\tilde{K},\p}^{\text{tor}}, 
 \W_{\tilde{\lambda},\p}^{\text{sub}})_{\tilde{\m}} $, 
where $\tilde{\X}_{\tilde{K},\p}^{tor}$ is a smooth toroidal compactification 
of $\tilde{\X}_{\tilde{K},\p}^{tor} \times \kappa_\p$ and $\W_{\tilde{\lambda},\p}^{\text{sub}(\text{can})}$
 is the sub-canonical (canonical) extension 
of $\W_{\tilde{\lambda},\p} := \mathscr{E}_G(\V_{\lambda_w})$.

By Serre duality as Proposition \ref{Ser}, 
$$ H^{i+1} 
(\tilde{\X}_{\tilde{K},\p}^{\text{tor}}, 
\W_{\tilde{\lambda},\p}^{\text{sub}})_{\tilde{\m}} \cong
  H^{4d-i-1} 
(\tilde{\X}_{\tilde{K},\p}^{\text{tor}}, 
\W_{-2\rho_{nc}- w_{0,G} \tilde{\lambda},\p}^{\text{can}})_{\tilde{\m}^\lor}^\lor$$
 as $
\kappa_\p$ vector space, where $2\rho_{nc}:=((2,2),
(-2,-2))_{\bar{\tau} \in \tilde{I}}$ and where $w_{0,G} \in W_G$ is the
longest length element. Then
 $-2\rho_{nc}- w_{0,G} \tilde{\lambda} = ((m_\tau - 2 , n_\tau 
 -2),(-n_\tau + 2,- m_\tau + 2))_{\bar{\tau} \in \tilde{I}}$. Since the 
 multiplicity of $d (w-4)$ 
as Fontaine-Laffaille weight in $ H^{4d -
 i-1} (\tilde{X}_{\tilde{K}}, \V_{\tilde{\mu}}/ 
\varpi)_{\tilde{\m}^\lor}$ is equal to $\kappa_\p$-dimension of $ H^{4d-i-1} 
(\tilde{X}_{\tilde{K},\p}^{\text{tor}}, 
\W_{-2\rho_{nc}- w_{0,G} \tilde{\lambda},\p}^{\text{can}})_{\tilde{\m}^\lor}$ where  
$\tilde{\mu} := (-n_\tau +2 , -m_\tau+2, m_\tau -2, n_\tau -
2)_{\bar{\tau}\in \tilde{I}}$, because $|\tilde{\mu}|_{comp} = |\tilde{\lambda}|_{comp} +4d < p$. Then by Theorem \ref{cs}, 
we have $ H^{4d -i-1} (\tilde{X}_{\tilde{K}}, \V_{\tilde{\mu}}/ 
\varpi)_{\tilde{\m}^\lor} = 0$ and therefore by corollary \ref{LP1} we have 
$$ H^{4d-i-1} 
(\tilde{X}_{\tilde{K},\p}^{\text{tor}}, 
\W_{-2\rho_{nc}- w_{0,G} \tilde{\lambda},\p}^{\text{can}})_{\tilde{\m}^\lor} = 0$$.

 \end{proof}
 
 Let us define another condition
\begin{itemize}
	\item[($p$-small)] $p \geq 2d^2 (d+1) (-2w+4)$ and $w$ is even. 
\end{itemize}

 \begin{theorem}\label{main}
 Let $\pi$ be a regular, cuspidal and cohomological automorphic 
 representation of $\GL_2 (\A_F)$ with level $K$ and weight $
 \lambda$ such that $K_p$ is hyperspecial. Assume $\pi$ is unramified 
 outside of $S$ and $\lambda$ satisfies conditions (p-small) and assumption of Corollary \ref{weight}. Let $\m \subset \T^S$  
be the associated maximal 
 ideal of Hecke algebra. Assume that (LI $
 \bar{\rho}_\m$) holds. then $H^i (X_K, \V_\lambda/\varpi)_\m$ is 
 zero for all $i \notin [d,3d-1]$.
\end{theorem}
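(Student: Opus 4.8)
The plan is to assemble the results of the preceding sections. Since $\dim_{\R} X_K = 4d-1$ we have $q_0 = d$ and $q_0 + \ell_0 = 3d-1$, so by Poincar\'e duality (Proposition \ref{poincare}) it suffices to prove $H^i(X_K, \V_\lambda/\varpi)_\m = 0$ for all $i < d$. First I would reduce to the partial regularity case: by Corollary \ref{weight} and Proposition \ref{twist} (and Poincar\'e duality once more, to pass from $(\lambda+\mu)^\vee$ back to $\lambda+\mu$ when it is the twisted dual weight that satisfies (PR)), one replaces $\lambda$ by a weight satisfying (PR), at the cost of replacing $\m$ by a character twist $\m(\psi)$; the hypothesis (p-small) together with the assumptions of Corollary \ref{weight} is exactly what guarantees that the new weight still has comparison size $<p$ and that $w$ is even with $\gcd(p-1,w+1)=1$. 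From here on $\lambda$ satisfies (PR).

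Next I would transport the problem to the boundary of the unitary Shimura variety $\tilde{X}_{\tilde{K}}$. By the $\tilde{\T}^S$-equivariant embedding (\ref{cohoembed}) it suffices to prove $H^i_\partial(\tilde{X}_{\tilde{K}}, \V_{\tilde{\lambda}}/\varpi)_{\tilde{\m}} = 0$ for $i<d$, where $\tilde{\lambda}$ corresponds to $\lambda$; as this is a finitely generated torsion $\tilde{\T}^S$-module, Nakayama's lemma reduces it to showing $\bar{\rho}_{\partial} := H^i_\partial(\tilde{X}_{\tilde{K}}, \V_{\tilde{\lambda}}/\varpi)[\tilde{\m}] = 0$ for $i<d$. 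Arguing by contradiction, suppose $\bar{\rho}_{\partial}\ne 0$ for some $i<d$ and fix a $G_{F_0}$-irreducible subrepresentation. Since $n=2$ the weight $\tilde{\lambda}$ is automatically mildly regular, so Theorem \ref{1-dim} (applicable because $|\tilde{\lambda}|_{comp}<p$) shows this subrepresentation is a character $\chi$ of $G_{F_0}$.

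The heart of the argument is to trap $\chi$. By Lemma \ref{fac} the restrictions to $G_{F'}$ of $\bar{\theta} = (\otimes\Ind_{F}^{F_0})(\wedge^2 \bar{\rho}_{\tilde{\m}}^\vee(-5))$ and of $\bar{\rho}_{\partial}^{ss}$ both factor through $H(\F_q)$; by the Eichler--Shimura relations (Theorem \ref{ES}, combined with the long exact sequence of the boundary) $\chi(g)$ is an eigenvalue of $\bar{\theta}(g)$ for every $g\in G_{F'}$. Lemma \ref{1} --- where the bound $p>d^2(d+1)(-2w+4)$ is decisive --- then forces $\chi$ to be a direct summand of $\bar{\theta}|_{G_{F'}}$, and Frobenius reciprocity together with the Mackey formula identifies $\chi$, as a $G_{F_0}$-representation, with one of three explicit characters built from $\bar{\rho}_\m$, whose Fontaine--Laffaille weights (computed via Lemma \ref{inertia}) are the three explicit values $d(-w+4)$, $d\cdot w$ and $2d$. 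Now Lemma \ref{2} eliminates $d(-w+4)$ and $2d$: via (\ref{ls2}) (valid since $\tilde{\lambda}$ is regular) $\chi$ is also a subrepresentation of $H^{i+1}_c(\tilde{X}_{\tilde{K}}, \V_{\tilde{\lambda}}/\varpi)$, whose Fontaine--Laffaille weights must be of the form $p(\tilde{w})$ for $\tilde{w}\in W^P$ with $\ell(\tilde{w})\le i+1\le d$ by Theorem \ref{LP}, and the tabulated values of $p(\tilde{w}_{\bar{\tau}})$ forbid these two for $i<d$. Lemma \ref{3} eliminates the last possibility $d\cdot w = p(\Id)$: by Corollary \ref{LP1} its multiplicity in $H^{i+1}_c(\tilde{X}_{\tilde{K}}, \V_{\tilde{\lambda}}/\varpi)_{\tilde{\m}}$ equals the dimension of a coherent cohomology group on a smooth toroidal compactification, which by Serre duality (Corollary \ref{Ser}) equals a multiplicity of a Fontaine--Laffaille weight in $H^{4d-i-1}(\tilde{X}_{\tilde{K}}, \V_{\tilde{\mu}}/\varpi)_{\tilde{\m}^\vee}$ for the dual weight $\tilde{\mu}$, and this group vanishes by the Caraiani--Scholze theorem (Theorem \ref{cs}), whose hypotheses hold because $\bar{\rho}_{\tilde{\m}^\vee}$ is of length $\le 2$ and decomposed generic. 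This contradiction yields $\bar{\rho}_{\partial}=0$ for all $i<d$, hence the asserted vanishing for $i\notin[d,3d-1]$.

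The substantive difficulty --- already settled in the preparatory lemmas --- is the \emph{almost incompatibility of Fontaine--Laffaille weights}: the passage from the statement that $\chi(g)$ is an eigenvalue of $\bar{\theta}(g)$ for all $g\in G_{F'}$ to the conclusion that $\chi$ is a direct summand of $\bar{\theta}|_{G_{F'}}$ (Lemma \ref{1}), and the weight bookkeeping that first confines $\chi$ to a three-element list and then excludes every member of it (Lemmas \ref{2} and \ref{3}); all of these hinge on the numerical inequalities relating $p$, $w$ and $d$. The only point requiring care in the assembly itself is checking that, after the reduction to the (PR) case, the hypotheses of all the cited results remain simultaneously valid --- $|\tilde{\lambda}|_{comp}<p$, mild regularity (automatic for $n=2$), (LI $\bar{\rho}_\m$), $\gcd(p-1,w+1)=1$, and the various primality inequalities --- which is exactly what the hypothesis (p-small) is designed to secure.
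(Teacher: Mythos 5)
Your proposal is correct and follows essentially the same route as the paper: Poincar\'e duality to reduce to $i<d$, the twist/duality reduction to the (PR) case via Corollary \ref{weight} and Proposition \ref{twist}, the embedding of Theorem \ref{acc1} into the boundary cohomology of $\tilde{X}_{\tilde{K}}$, Theorem \ref{1-dim} to reduce to characters, and then Lemmas \ref{1}, \ref{2}, \ref{3} to exclude every possible character. The extra detail you supply (Nakayama, mild regularity being automatic for $n=2$, the bookkeeping of the three Fontaine--Laffaille weights) is exactly the content the paper delegates to the cited lemmas, so there is nothing further to add.
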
  
  
  \begin{proof}
First of all by Proposition \ref{poincare}, it suffices to prove vanishing of the cohomology for only $i<d$ and the Corollary \ref{weight} implies that we can assume that $\lambda$ satisfies the condition (PR).

  The ($p$-small) condition guaranties that there is a twist of $\pi$ with pure weight $w'$ such that satisfies $p \geq d^2 (d+1) 
  (-2w'+4)$ and $\gcd (w'+1 , p-1) = 1$. Therefore by theorem \ref{twist}, 
  we can assume that $p \geq d^2 (d+1) (-2w+4)$ and $\gcd (w+1 , 
  p-1) = 1$. So lemmas \ref{1}, \ref{2} and \ref{3} implies that 
  $H_{\partial}^i (\tilde{X}_{\tilde{K}}, \V_{\tilde{\lambda}}/ 
  \varpi)_{\tilde{\m}} = 0$ for all $i<d$. Hence theorem 
  \ref{acc1} implies that $H^i (X_K, \V_{\lambda}/ \varpi)_{m} = 0$ for 
  all $i < d$. 
  \end{proof}
  
  \begin{remark}
  As Remark \ref{bound} we can improve the bound to 
  $$ 4 d \cdot (\sum_{i=1}^d \text{gcd}(p-1,i)) (2|w|+4)$$ 
  \end{remark}

  \begin{corollary}\label{cormin}
  Under the assumptions of Theorem \ref{main}, we have 
  $H^d(X_K,\V_\lambda)_\m$ is torsion-free.
 \end{corollary}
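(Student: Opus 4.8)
The plan is to read off torsion-freeness directly from the mod-$\varpi$ vanishing supplied by Theorem \ref{main}, using the Bockstein long exact sequence. First I would consider the short exact sequence of local systems of $\O$-modules on $X_K$
$$0 \longrightarrow \V_\lambda \xrightarrow{\ \varpi\ } \V_\lambda \longrightarrow \V_\lambda/\varpi\V_\lambda \longrightarrow 0,$$
which is exact because $\V_\lambda$ is a local system of finite free $\O$-modules. Its associated long exact sequence in Betti cohomology is $\T^S$-equivariant, and since localization at the maximal ideal $\m$ is exact, we obtain an exact sequence
$$\cdots \longrightarrow H^{d-1}(X_K,\V_\lambda/\varpi)_\m \xrightarrow{\ \partial\ } H^d(X_K,\V_\lambda)_\m \xrightarrow{\ \varpi\ } H^d(X_K,\V_\lambda)_\m \longrightarrow \cdots$$
By Theorem \ref{main}, the source term vanishes because $d-1 \notin [d,3d-1]$; hence $\partial = 0$, so multiplication by $\varpi$ on $H^d(X_K,\V_\lambda)_\m$ is injective. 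Equivalently, the $\varpi$-torsion submodule $H^d(X_K,\V_\lambda)_\m[\varpi]$, being a quotient of $H^{d-1}(X_K,\V_\lambda/\varpi)_\m$, is zero.

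To conclude I would recall that $X_K$ has the homotopy type of a finite CW complex (for instance via its Borel--Serre compactification), so $H^d(X_K,\V_\lambda)$ is a finitely generated $\O$-module, and therefore so is its localization $H^d(X_K,\V_\lambda)_\m$ at the maximal ideal $\m$ of the finite $\O$-algebra $\T^S(K,\lambda)$. A finitely generated module over the discrete valuation ring $\O$ is the direct sum of a free module and a finite sum of cyclic torsion modules $\O/\varpi^{m_j}$; the vanishing of $\varpi$-torsion forces the torsion part to be trivial. Hence $H^d(X_K,\V_\lambda)_\m$ is in fact free over $\O$, and in particular torsion-free, which is the assertion.

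There is essentially no obstacle here: the statement is a formal consequence of Theorem \ref{main}. The only points requiring (routine) care are the Hecke-equivariance of the Bockstein sequence and the exactness of localization at $\m$, both of which are standard and already used repeatedly in the paper (e.g. for the long exact sequence \ref{exact}); and the finite generation of the cohomology over $\O$, which is classical.
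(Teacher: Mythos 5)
Your argument is correct and is essentially the paper's own proof: the paper also deduces torsion-freeness from the Bockstein long exact sequence $\cdots \to H^{i-1}(X_K,\V_\lambda/\varpi)_\m \to H^i(X_K,\V_\lambda)_\m \xrightarrow{\times\varpi} H^i(X_K,\V_\lambda)_\m \to \cdots$ together with the vanishing of $H^{d-1}(X_K,\V_\lambda/\varpi)_\m$ from Theorem \ref{main}. Your added remarks on Hecke-equivariance, exactness of localization, and finite generation over $\O$ are routine elaborations of the same argument.
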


\begin{proof}
It follows from the following long exact sequence:
$$\cdots \rightarrow H^{i-1}(X_K,\V_\lambda/\varpi)_\m \rightarrow H^i(X_K,\V_\lambda)_\m \xrightarrow{\times \varpi} H^i(X_K,\V_\lambda)_\m \rightarrow H^i(X_K,\V_\lambda/\varpi)_\m.
$$
\end{proof}

Recall that Calegari and Geraghty introduced Conjectures A (see \cite[Section 5.3]{CG18}) and B (see \cite[Section 9.1]{CG18}) about the existence of Galois representations over localized Hecke algebras $\T^S$ and $\T^S_Q$ with characteristic polynomials given by Hecke polynomials at primes outside $p$ and the level, and satisfying local global compatibilities,  which we denote by (LGC), at primes in the level or above $p$. 
Furthermore they assume in Conjecture B that $H^i(X_K,\V_\lambda/\varpi)_\m=0$,
for $i\notin [q_0,q_0+\ell_0]$. This is our Main Theorem.

If $p$ splits completely in $F$, the existence of the Galois representations over $\T^S$ and $\T^S_Q$ with the expected characteristic polynomial at primes outside the level and $p$
has been established in \cite[Theorem 6.1.4]{CGH}. Moreover (LGC) holds at primes in $Q$ by \cite[Lemma 9.6]{CG18} since $n=2$. It holds  at $p$ in the Fontaine-Laffaille case up to an nilpotent ideal by \cite[Theorem 4.5.1]{ACC}.

Let us assume that the level group $K$ is of Iwahori type $K=K_0(\n)$ and that
 $\bar{\rho}_\m$ is minimal at primes dividing $\n$ (see \cite[Definition 3.1 (4)]{CG18}). Recall that (LGC)
holds at primes dividing $\n$ modulo a nilpotent ideal by \cite[Theorem 3.1.1]{ACC}. 
%Indeed, this theorem shows that for $v\vert\n$ and $\sigma\in I_v$, the only eigenvalue of 
%$\rho_\m(\sigma)$ is one. But the assumption of minimality at $v$ says that $\bar{\rho}_\m^{I_v}$ is one-dimensional. The same must therefore hold for 
%$\rho_\m^{I_v}$.

\begin{corollary}\label{CG} 
  Under the assumptions of Theorem \ref{main}, if we assume  $\bar{\rho}_\m$ minimal and that (LGC) holds at primes dividing $\n$, then
  $H^{3d-1}(X_K,\V_\lambda)_\m$ is free of rank one over $\T^S$ and
$H^{d}(X_K,\V_\lambda)_\m$ is isomorphic to $\Hom(\T^S,\O)$ as $\T^S$-module.
 \end{corollary}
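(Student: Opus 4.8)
The plan is to derive Corollary \ref{CG} from the commutative-algebra output of the Calegari--Geraghty patching method, namely \cite[Proposition 6.4]{CG18}, for which the only missing input in our situation is the cohomological concentration provided by Theorem \ref{main}. Recall that for $G=\res_{F/\Q}\GL_2$ one has $q_0=d$ and $\ell_0=2d-1$, so that $[q_0,q_0+\ell_0]=[d,3d-1]$. By Theorem \ref{main}, together with d\'evissage with respect to $\varpi$, Corollary \ref{cormin} and Poincar\'e duality (Proposition \ref{poincare}), the complex $R\Gamma(X_K,\V_\lambda)_\m$ has cohomology concentrated in degrees $[d,3d-1]$, with $H^d(X_K,\V_\lambda)_\m$ torsion-free; this is exactly the vanishing hypothesis on $H^\bullet(X_K,\V_\lambda/\varpi)_\m$ imposed in Conjecture B of \cite{CG18}.

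First I would assemble the Galois-theoretic inputs needed to run the patching. The Galois representations over the localized Hecke algebra $\T^S_\m$ and over the $\T^S_Q$ attached to sets $Q$ of Taylor--Wiles primes, with characteristic polynomials prescribed by the Hecke polynomials away from $p$ and $\n$, are available by \cite[Theorem 6.1.4]{CGH} (and, more generally, by the constructions of \cite{ACC}). Local--global compatibility (LGC) at the auxiliary primes $v\in Q$ holds for $n=2$ by \cite[Lemma 9.6]{CG18}; (LGC) at the places above $p$ holds, up to a nilpotent ideal, by \cite[Theorem 4.5.1]{ACC} in the Fontaine--Laffaille range (guaranteed here since $p$ is large with respect to $\lambda$); and (LGC) at the primes dividing $\n$ holds, up to a nilpotent ideal, by \cite[Theorem 3.1.1]{ACC} under the minimality assumption on $\bar\rho_\m$. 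The hypothesis (LI $\bar\rho_\m$) of Theorem \ref{main} ensures that $\bar\rho_\m$ has adequate image, so that sets $Q$ of Taylor--Wiles primes with the required genericity exist.

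Next I would carry out the patching as in \cite[Section 6]{CG18}: patch the complexes $R\Gamma(X_{K_0(\n Q)},\V_\lambda/\varpi^m)_{\m_Q}$ into a perfect complex of modules over the patched framed deformation ring $R_\infty$, which is a power series ring over $\O$ of the expected Krull dimension because all the pertinent local deformation rings are formally smooth (minimal at $\n$, Fontaine--Laffaille at $p$, unobstructed of Taylor--Wiles type at $Q$). The numerical identity $\dim R_\infty-\dim S_\infty=\ell_0$, combined with the fact that the patched cohomology is concentrated in an interval of length $\ell_0$, forces this cohomology to sit in a single degree and to be free over $R_\infty$; descending the patching diagram then yields that $H^{q_0+\ell_0}(X_K,\V_\lambda)_\m=H^{3d-1}(X_K,\V_\lambda)_\m$ is free of rank one over $\T^S$ (one first obtains freeness over the local factor $\T^S(K,\lambda)_\m$, which the $R=\T$ conclusion identifies with $\T^S_\m$). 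For the identification $H^{q_0}(X_K,\V_\lambda)_\m\cong\Hom_\O(\T^S,\O)$ I would feed this freeness into Poincar\'e duality: since the cohomology of $X_K$ at the non-Eisenstein ideal $\m$ vanishes on the boundary, $H^\bullet_c(X_K,-)_\m=H^\bullet(X_K,-)_\m$, and Proposition \ref{poincare} (using $4d-1-q_0=q_0+\ell_0$) exhibits $H^{d}(X_K,\V_\lambda)_\m$ as the $\O$-linear dual of $H^{3d-1}(X_K,\V_{\lambda^\vee})_{\m^\vee}$, which is free of rank one over $\T^S$ acting through $\iota$; dualizing gives $\Hom_\O(\T^S,\O)$ as a $\T^S$-module.

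The main obstacle I anticipate is the bookkeeping around the nilpotent ideals appearing in (LGC) at $p$ and at $\n$: one must verify that after patching the relevant quotient of $R_\infty$ still acts faithfully on the patched module, so that these nilpotent ideals vanish and the $R=\T$ identification (hence freeness over $\T^S$ itself, not merely over a thickening) survives. The standard remedy is that the patched module is maximal Cohen--Macaulay over the regular ring $R_\infty$ with full support, so any nilpotent ideal in the image of $R_\infty$ in its endomorphisms must be zero. A secondary technical point is to reconcile the running hypotheses of Theorem \ref{main} with the precise settings in which \cite{CGH} and \cite{ACC} provide the Galois representations over $\T^S_Q$ and establish (LGC); this is why the statement carries the minimality hypothesis at $\n$ and is implicitly placed in the range where those inputs apply.
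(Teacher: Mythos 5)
Your proposal follows essentially the same route as the paper: Theorem \ref{main} supplies the vanishing hypothesis of Calegari--Geraghty's Conjecture B, the Galois-representation and (LGC) inputs you list supply the remaining hypotheses, and the conclusion is the output of their patching method (the paper simply cites \cite[Theorems 6.3 and 6.4]{CG18} where you unpack the argument), with the $H^{d}$ statement obtained by Poincar\'e duality in both cases. The one point you gloss is why the rank is one: patching and descent only give freeness of some rank over $\T^S$, and the paper pins it to one by the characteristic-zero multiplicity computations of Borel--Wallach (Chapter 3) applied to each automorphic representation contributing to $\T^S$.
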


\begin{proof}
By the assumptions and our Main Theorem, Conjecture B holds. Hence the result follows from \cite[Theorems 6.3 and 6.4]{CG18} . The rank is one because it is so if we localize  the cohomology at each automorphic representation occuring in $\T^S$ by calculations of Borel-Wallach, Chapter 3. The second part follows by Poincar\'e duality applied to
 $H^{3d-1}(X_K,\V_\lambda)_{\m^\vee}$.
\end{proof}

Shayan Gholami, gholami@math.univ-paris13.fr, LAGA UMR 7539, Institut Galil\'ee, USPN Universit\'e Paris XIII, 93430 Villetaneuse, France

\end{document}